\numberwithin{equation}{section}
 \newtheorem{lemma}{Lemma}[section]
 \newtheorem{proposition}[lemma]{Proposition}
 \newtheorem{theorem}{Theorem}[section]
 \newtheorem{corollary}[lemma]{Corollary}
 \theoremstyle{remark}
 \newtheorem{remark}{Remark}[section]
\def\wideubar{\underaccent{{\cc@style\underline{\mskip10mu}}}}
\def\Wideubar{\underaccent{{\cc@style\underline{\mskip8mu}}}}
\def\widebar{\accentset{{\cc@style\underline{\mskip10mu}}}}
\def\Widebar{\accentset{{\cc@style\underline{\mskip8mu}}}}
\newcommand{\lrn}{\left\vert\kern-0.3ex\left\vert\kern-0.3ex\left\vert}
\newcommand{\rrn}{\right\vert\kern-0.3ex\right\vert\kern-0.3ex\right\vert}
\begin{document}

\title{
 \bf Symmetric flows for compressible heat-conducting fluids with temperature dependent viscosity coefficients}
\author{
{\sc Ling Wan}\\
{\footnotesize School of Mathematics and Statistics, Wuhan University, Wuhan 430072, China}\\[2mm]
{\sc Tao Wang}\thanks{Corresponding author. Email addresses: ling.wan@whu.edu.cn (L.\,Wan), tao.wang@whu.edu.cn (T. Wang).}\\
{\footnotesize School of Mathematics and Statistics, Wuhan University, Wuhan 430072, China}\\[-1mm]
{\footnotesize DICATAM, Mathematical Division, University of Brescia, Via Valotti 9, 25133 Brescia, Italy}
}

\date{\empty}

\maketitle
\begin{abstract}
We consider the Navier--Stokes equations for compressible heat-conducting ideal polytropic gases in a bounded annular domain when the viscosity and thermal conductivity coefficients are general smooth functions of temperature.
A global-in-time, spherically or cylindrically symmetric, classical solution to the initial boundary value problem is shown to exist uniquely and converge exponentially to the constant state as the time tends to infinity under certain assumptions on the initial data and the adiabatic exponent $\gamma$.
The initial data can be large if $\gamma$ is sufficiently close to 1.
These results are of Nishida--Smoller type and extend the work [Liu et al., SIAM J. Math. Anal. 46 (2014), 2185--2228] restricted to the one-dimensional flows.

\vspace{2mm}

\noindent  {\bf Keywords:} Compressible Navier--Stokes equations;
  Temperature dependent viscosity coefficients;
  Global symmetric solutions; 
  Nishida--Smoller type result;
  Exponential stability

\vspace{2mm}
\noindent  {\bf Mathematics Subject Classification:}
  35Q35 (35B40, 76N10)
\end{abstract}


\section{ Introduction}
The motion of a compressible viscous and heat-conducting fluid in $\Omega\subset\mathbb{R}^d$ can be described by the Navier--Stokes equations in Eulerian coordinates:
\begin{subequations}
 \label{cns1}
 \begin{alignat}{2}
 &\partial_t\rho+{\rm div}(\rho \bm{u})=0,\\
 &\partial_t(\rho \bm{u})+{\rm div}(\rho \bm{u}\otimes\bm{u})
 +\nabla P={\rm div}\,\mathbb{S},\\
 &\partial_t(\rho E)+{\rm div}(\rho \bm{u}E+\bm{u}P)
 ={\rm div}(\kappa\nabla\theta+\mathbb{S}\cdot\bm{u}).
 \end{alignat}
\end{subequations}
Here  the primary dependent variables are the density $\rho$, the velocity field $\bm{u}\in\mathbb{R}^d$, and the temperature $\theta$.
The symbol $E=e+\tfrac{1}{2}|\bm{u}|^2$ is the specific total energy.
For ideal polytropic gases, the pressure $P$ and the specific internal energy $e$ are related with $\rho$ and $\theta$ by equations of state:
\begin{equation} \label{ideal1}
P=R\rho\theta,\quad e=c_v\theta,
\end{equation}
where $R>0$,  $c_v=R/(\gamma-1)$, and $\gamma>1$ are the specific gas constant, the specific heat at constant volume, and the adiabatic exponent, respectively. 
The fluid is assumed to be Newtonian so that the viscous stress tensor $\mathbb{S}$ is of form
\begin{equation} 
\label{VST}
\mathbb{S}=\mu\left(\nabla\bm{u}+(\nabla\bm{u})^{\mathsf{T}}\right)
+\lambda\,{\rm div}\,\bm{u}\,\mathbb{I},
\end{equation}
where  $(\nabla\bm{u})^{\mathsf{T}}$ is the transpose matrix of $\nabla\bm{u}$ and $\mathbb{I}$ is the $d\times d$ identity matrix.
The viscosity coefficients $\mu$, $\lambda$, and the thermal conductivity coefficient $\kappa$ are prescribed through constitutive relations as functions of the density and temperature satisfying $\mu>0$, $\kappa>0$, and $2\mu+d\lambda>0$ (see \citet{LL87MR961259}).

In this paper we establish the existence and exponential decay rate of global-in-time,  spherically or cylindrically symmetric, classical solutions to \eqref{cns1}--\eqref{VST} in the bounded annular domain $\Omega=\{\bm{x}=(x_1,\cdots\!,x_d)\in\mathbb{R}^d: a<r<b\}$ for $0<a<b<\infty$ with large initial data. Here we set $d\in\mathbb{N}_+$ and  $r=|\bm{x}|$ in the spherically symmetric case, while $d=3$ and $r=\sqrt{x^2_1+x^2_2}$ in the cylindrically symmetric case. 
The system \eqref{cns1}--\eqref{VST} is
supplemented with  the spherically or cylindrically symmetric initial data:
\begin{align} \label{initial1}
(\rho,\bm{u},\theta)(0,\bm{x})
=(\rho_0,\bm{u}_0,\theta_0)(\bm{x}) \quad {\rm for}\  \ \bm{x}\in\Omega,
\end{align}
and the boundary conditions:
\begin{equation}
\label{bdy1}
\bm{u}=0,\quad
\frac{\partial \theta}{\partial \bm{n}}=0\quad {\rm on}\  \ \partial\Omega,
\end{equation}
where $\bm{n}$ denotes the unit outward normal vector to $\partial\Omega$.

We are interested in the 
case where the transport coefficients $\mu$, $\lambda$, and  $\kappa$ are smooth functions of the temperature.
More specifically, we suppose that
\begin{equation} \label{transport}
\mu,\lambda,\kappa\in C^3(0,\infty),\quad
\mu(\theta)>0,\ \ \kappa(\theta)>0,\ \
2\mu(\theta)+d\lambda(\theta)>0 \quad {\rm for\ all}\ \ \theta>0.
\end{equation}
Our main motivation is provided by the kinetic theory of gases.
By virtue of the Chapman--Enskog expansion, the compressible Navier--Stokes system \eqref{cns1} is the first order approximation of the Boltzmann equation, and the transport coefficients $\mu$, $\lambda$, and $\kappa$ depend solely on the temperature (see \cite{CC90MR1148892} and \cite[Chapter X]{VK65}). 
In particular, if the intermolecular potential varies as $r^{-a}$ with $r$ being the molecule distance, then $\mu$, $\lambda$, and $\kappa$ satisfy
\begin{equation}\label{transport1}
 \mu=\bar{\mu}\theta^{\frac{a+4}{2a}},\quad
 \lambda=\bar{\lambda}\theta^{\frac{a+4}{2a}},\quad
\kappa=\bar{\kappa}\theta^{\frac{a+4}{2a}}\quad 
{\rm for}\ \ a> 0,
\end{equation}
where $\bar{\mu}$, $\bar{\lambda}$, and $\bar{\kappa}$  are constants.
In spite of this obvious physical relevance, there is no global existence result currently available beyond the small data \cite{Matsumura81energy,MN82MR784652,MN83MR713680} for the Navier--Stokes system \eqref{cns1}--\eqref{VST}  with  general adiabatic exponent $\gamma$ and transport coefficients \eqref{transport1}.

Let us mention some related results about the global existence and large-time behavior for the full compressible Navier--Stokes equations \eqref{cns1} with large data. 
The global existence and uniqueness of smooth solutions to \eqref{cns1}--\eqref{VST} in one-dimensional bounded domains are proved in the seminal work by \citet{KS77MR0468593} for \emph{constant transport coefficients} and large initial data.
The crucial step in \cite{KS77MR0468593} is to obtain the positive upper and lower bounds of the specific volume $\tau$ (i.e.\;the reciprocal of density $\rho$) and temperature $\theta$, which is achieved by means of a decent representation for  $\tau$ and the maximum principle.

The results in \cite{KS77MR0468593} have been generalized to cover the spherically and cylindrically symmetric flows.
In the case of spherical symmetry, \citet{N83MR809891} (resp.~\citet{J96MR1389908}) showed the  existence of global-in-time (generalized) solutions in bounded annular domains (resp.~in exterior domains), while \citet{CK02MR1916552} investigated the flows between a static solid core and a free boundary connected to a surrounding vacuum state.
In the cylindrically symmetric case, \citet{FS00MR1759201} established the global solvability with large data in a bounded annular domain.
Later, \citet{HJ04MR2091508} proved the global existence of a spherically or cylindrically symmetric weak solution with large discontinuous data in a ball.
As for the large-time behavior of global solutions, see \citet{J98MR1748226} (resp. \cite{J96MR1389908,Liang1405.0569}) for spherically symmetric flows in bounded domains (resp. in exterior domains), and \cite{CY15MR3279358} for cylindrically symmetric flows in bounded domains.
In all of these works the transport coefficients $\mu$, $\lambda$, and $\kappa$ are supposed to be constants.

The argument in \cite{KS77MR0468593,FS00MR1759201} can be applied to the case of \emph{constant viscosity} and temperature dependent thermal conductivity; 
see \cite{JK10MR2644363,PZ15MR3291375,W16MR3461630} for one-dimensional flows and \cite{JZ09MR2505859,QYYZ15MR3325782} for cylindrically symmetric flows. 
Under certain assumptions on  temperature dependent thermal conductivity $\kappa$,  pressure $P$,  and internal energy $e$,
\citet{WZ14MR3249721} show the global existence of spherically or cylindrically symmetric, classical and strong solutions for \eqref{cns1} with \emph{constant viscosity coefficients} $\mu$ and $\lambda$, which generalizes the work by \citet{K85MR791841} for one-dimensional flows.
The assumption of constant viscosity coefficients $\mu$ and $\lambda$ is essential in  \cite{N83MR809891,J96MR1389908,J98MR1748226,CK02MR1916552,HJ04MR2091508,Liang1405.0569,FS00MR1759201,JZ09MR2505859,CY15MR3279358,QYYZ15MR3325782,WZ14MR3249721}.

Temperature dependence of the viscosity coefficients $\mu$ and $\lambda$ turns out to have a strong influence on the solution and lead to difficulty in mathematical analysis for global solvability with large data. 
\citet{WZ16} recently obtain the first result on global well-posedness of smooth solutions to the one-dimensional Navier--Stokes system \eqref{cns1}--\eqref{VST} with general adiabatic exponent $\gamma$,  temperature dependent viscosity, and large initial data.
Unfortunately, this result do not cover the models satisfying \eqref{transport1}.
As far as we are aware, the only global solvability result currently available for \eqref{cns1}--\eqref{VST} with constitutive relations \eqref{transport1} and large data
is due to  \citet{LYZZ14MR3225502} for one-dimensional flows.
In \cite{LYZZ14MR3225502} the initial density and velocity can be large, but $\gamma-1$ and the $H^3$-norm of $\theta_0-1$ with $\theta_0$ being the initial temperature have to be small. 
In other words, this is a Nishida--Smoller type global solvability result with large data.
The original Nishida--Smoller type global solvability result is about the one-dimensional ideal polytropic isentropic compressible Euler system in \cite{NS73MR0330789} (see also \cite{Liu77MR0435618,Temple81MR626623} for the  nonisentropic case). 

It is a  natural and interesting problem to establish a Nishida--Smoller type global existence result for the spherically or cylindrically symmetric solutions of \eqref{cns1}--\eqref{bdy1} with temperature dependent transport coefficients \eqref{transport}.
Our main objective is to study this problem  and to show the exponential stability of solutions toward the constant equilibrium states as well.  
For this purpose, we reduce the initial boundary value problem \eqref{cns1}--\eqref{transport} to the corresponding problem in either spherical or cylindrical coordinates.
We let $(u, v, w)$ be the velocity components in either spherical or cylindrical coordinates.
Thus
$$\bm{u}(t,\bm{x})=\frac{\tilde{u}(t,r)}{r}\bm{x},\quad 
v=w\equiv 0$$
 in the spherically symmetric case, while
\begin{equation*}
\bm{u}(t,\bm{x})
=\frac{\tilde{u}(t,r)}{r}(x_1,x_2,0)
+\frac{\tilde{v}(t,r)}{r}(-x_2,x_1,0)+\tilde{w}(t,r)(0,0,1)
\end{equation*}
in the cylindrically symmetric case. 
In both cases, the density and temperature depend only on the time $t$ and the radius $r$, i.e. $(\rho,\theta)(t,\bm{x})=(\tilde{\rho},\tilde{\theta})(t,r).$
The system for $(\tilde\rho,\tilde{u}, \tilde{v},\tilde{w},\tilde\theta)$ then takes the form
\begin{subequations}
\label{cns2}
\begin{alignat}{2}
&\tilde\rho_t+\frac{(r^m \tilde\rho \tilde{u})_r}{r^m}=0, 
\label{cns2.a}\\
&\tilde\rho(\tilde{u}_t+\tilde{u}\tilde{u}_r)-\frac{\tilde{\rho}\tilde{v}^2}{r}+\tilde{P}_r=
\left[\frac{\tilde\nu (r^m \tilde{u})_r}{r^m}\right]_r-\frac{2m \tilde{u}\tilde\mu_r}{r},  
\label{cns2.b}\\
&\tilde\rho(\tilde{v}_t+\tilde{u}\tilde{v}_r)+\frac{\tilde{\rho}\tilde{u}\tilde{v}}{r}=
(\tilde{\mu}\tilde{v}_r)_r+\frac{2\tilde{\mu} \tilde{v}_r}{r^m}-\frac{m(\tilde{\mu}r^{m-1}\tilde{v})_r}{r^m}-\frac{\tilde{\mu}\tilde{v}}{r^{2m}}, 
\label{cns2.c}\\
&\tilde\rho(\tilde{w}_t+\tilde{u}\tilde{w}_r)=
(\tilde{\mu}\tilde{w}_r)_r+\frac{m\tilde{\mu} \tilde{w}_r}{r}, 
\label{cns2.d}\\
&\tilde\rho(\tilde{e}_t+\tilde{u}\tilde{e}_r)+\frac{\tilde{P}(r^m \tilde{u})_r}{r^m}=
\frac{(\tilde\kappa r^m\tilde\theta_r)_r}{r^m}+\tilde{\mathcal{Q}},
\label{cns2.e} 
\end{alignat}
\end{subequations}
where $\tilde\mu=\mu(\tilde\theta)$, $\tilde\nu=2\mu(\tilde\theta)+\lambda(\tilde\theta)$, $\tilde{P}=R\tilde\rho \tilde\theta$, $\tilde{e}=c_v\tilde\theta$, and
\begin{equation*}
\tilde{\mathcal{Q}}=
\frac{ \tilde\nu(r^m \tilde{u})_r^2}{r^{2m}}-\frac{2m\tilde\mu(r^{m-1} \tilde{u}^2)_r}{r^m}+\tilde{\mu}\tilde{w}_r^2+\tilde{\mu}\left[\tilde{v}_r-\frac{\tilde{v}}{r^m}\right]^2.
\end{equation*}
In spherically symmetric case, $d\in\mathbb{N}_+$, $m=d-1$, and $\tilde{v}=\tilde{w}=0$, whereas in cylindrically symmetric case, $d=3$ and $m=1$. 
The initial and boundary conditions \eqref{initial1} and \eqref{bdy1} are reduced to
\begin{alignat}{2}
\label{initial2}
(\tilde\rho,\tilde{u},\tilde{v},\tilde{w},\tilde\theta)(0,r)&=
(\tilde\rho_0,\tilde{u}_0,\tilde{v}_0,\tilde{w}_0,\tilde\theta_0)(r), \qquad && a\leq r\leq b,\\[1mm]
\label{bdy2}
(\tilde{u},\tilde{v},\tilde{w},\tilde\theta_r)(t,a)&=(\tilde{u},\tilde{v},\tilde{w},\tilde\theta_r)(t,b)=0,\qquad && t\geq 0.
\end{alignat}
These boundary conditions are supposed to be compatible with the initial data.


To establish the global existence, it is convenient to transform the initial boundary value problem \eqref{cns2}--\eqref{bdy2} into that in Lagrangian coordinates.
We introduce the Lagrangian coordinates $(t,x)$ and denote $$(\rho,u,v,w,\theta)(t,x) =(\tilde{\rho},\tilde{u},\tilde{v},\tilde{w},\tilde{\theta})(t,r),$$ 
where
\begin{equation} \label{r}
r=r(t,x)=r_0(x)+\int_0^t \tilde{u}(s,r(s,x))\mathrm{d}s,
\end{equation}
and 
\begin{equation} \label{r0}
r_0(x):=h^{-1}(x),\quad 
h(r):=\int_a^r z^m \tilde\rho_0(z)\mathrm{d}z.
\end{equation}
Notice that  the function $h$ is invertible on $[a,b]$ provided that $\tilde{\rho}_0(z)>0$ for each $z\in[a,b]$ (which will be assumed in Theorem \ref{thm}). 
Here, without loss of generality,
we set that $h(b)=1$.
Due to \eqref{cns2.a}, \eqref{bdy2}, and \eqref{r}, we see 
\begin{align} \notag
\frac{\partial}{\partial t}\int_a^{r(t,x)} z^m \tilde\rho(t,z)\mathrm{d}z
=
\frac{\partial}{\partial t}\int_b^{r(t,x)} z^m \tilde\rho(t,z)\mathrm{d}z=0.
\end{align}
Then it is easy to check that
\begin{align} \label{r_id}
\int_a^{r(t,x)} z^m \tilde\rho(t,z)\mathrm{d}z=h(r_0(x))=x
\quad{\rm and}\quad 
\int_b^{r(t,1)} z^m \tilde\rho(t,z)\mathrm{d}z=0.
\end{align}
Hence $r(t,0)=a$, $r(t,1)=b$ for $t\geq 0$, and the region $\{(t,r): t\geq 0,\ a\leq r\leq b\}$ under consideration is transformed into $\{(t,x):t\geq 0,0\leq x\leq 1\}$.
The identities \eqref{r} and \eqref{r_id} imply
\begin{align} \label{r_eq}
r_t(t,x)=u(t,x),\quad 
r_x(t,x)=r^{-m}\tau(t,x),
\end{align}
where $\tau:=1/\rho$ is the specific volume.
By virtue of \eqref{r_eq},  the system \eqref{cns2} is reformulated to that for $(\tau,u,v,w,\theta)(t,x)$ as
\begin{subequations} \label{cns}
\begin{alignat}{2}
&\tau_t=(r^m u)_x,\label{cns.a}\\ 
&u_t-\frac{v^2}{r}+r^mP_x=
r^m\left[\frac{\nu(r^m u)_x}{\tau}\right]_x-2mr^{m-1}u\mu_x,
\label{cns.b}\\ 
&v_t+\frac{uv}{r}=
r^m\left[\frac{\mu r^m v_x}{\tau}\right]_x+2\mu  v_x-m( \mu r^{m-1}  v)_x-\frac{\mu \tau v}{r^{2m}},
\label{cns.c}\\  
&w_t=
r^m\left[\frac{\mu r^m w_x}{\tau}\right]_x+m\mu  r^{m-1} w_x,
\label{cns.d}\\  
&e_t+P(r^m u)_x=
\left[\frac{\kappa r^{2m} \theta_x}{\tau}\right]_x+\mathcal{Q}, \label{cns.e}
\end{alignat}
\end{subequations}
where $t>0$, $x\in \mathcal{I}:=(0,1)$, and
\begin{align}
\label{ideal0}
P&=\frac{R \theta}{\tau},\quad\
e=c_v \theta,\quad \
c_v=\frac{R}{\gamma-1},\\ \label{Q}
\mathcal{Q}&=
\frac{\nu(r^m u)_x^2}{\tau}-2m\mu(r^{m-1} u^2)_x+\frac{\mu r^{2m} w_x^2}{\tau} +\mu \tau\left[\frac{r^m v_x}{\tau}-\frac{v}{r^m}\right]^2.
\end{align}
The initial and boundary conditions are 
\begin{alignat}{2} \label{initial0}
(\tau,u,v,w,\theta)(0,x)&=
(\tau_0,u_0,v_0,w_0,\theta_0)(x),
\qquad && x\in\widebar{\mathcal{I}},\\[1mm]               \label{bdy}
(u,v,w,\theta_x)(t,0)&=(u,v,w,\theta_x)(t,1)=0,\qquad&& t\geq 0,
\end{alignat}
where $(\tau_0,u_0,v_0,w_0,\theta_0):=
(1/\tilde{\rho}_0,\tilde{u}_0,\tilde{v}_0,\tilde{w}_0,\tilde{\theta}_0)\circ r_0,$ the symbol $\circ$ denotes composition, and $r_0$ is defined by \eqref{r0}.

We now state our main results in the following theorem.
\begin{theorem} \label{thm}
Suppose that the transport coefficients $\mu$, $\lambda$, and $\kappa$ satisfy \eqref{transport}. Let the initial data $(\tau_0,u_0,v_0,w_0,\theta_0)$ be compatible with the boundary conditions \eqref{bdy} and satisfy
\begin{align} \label{H1}
 &\|(\tau_0,u_0,v_0,w_0)\|_{H^3(\mathcal{I})}+
 \left\|\left(\sqrt{c_v}(\theta_0-1),\theta_{0xx}\right)\right\|_{H^1(\mathcal{I})}\leq \Pi_0,\\[0.5mm] \label{H2}
 &V_0^{-1}\leq \tau_0(x)\leq V_0,\qquad
 \theta_0(x)\geq V_0^{-1} \qquad {\rm for\ all}\ x\in \mathcal{I},
\end{align}
where  $\Pi_0$  and $V_0$ are positive constants independent of $\gamma-1$.
Then there exist constants $\epsilon_0>0$ and $C_1>0$, which depend only on $\Pi_0$  and $V_0$, such that if $\gamma-1\leq \epsilon_0,$ then the initial boundary value problem \eqref{cns}--\eqref{bdy} has a unique global solution  $(\tau,u,v,w,\theta)\in C([0,\infty),H^3(\mathcal{I}))$ satisfying
\begin{equation}\label{thm_C2}
C_1^{-1}\leq \tau(t,x)\leq C_1,\quad \tfrac{1}{2}\leq \theta(t,x)\leq 2
\qquad {\rm for\ all}\ (t,x)\in[0,\infty)\times\widebar{\mathcal{I}},
\end{equation}
and the exponential decay rate
\begin{equation}\label{thm_C3}
  \left\|\left(\tau-\bar{\tau},u,v,w,\theta-\bar{\theta}\right)(t)\right\|_{H^1(\mathcal{I})}+\left\|r(t)-\bar{r}\right\|_{H^2(\mathcal{I})}\leq C_{\gamma}\mathrm{e}^{-c_{\gamma}t}\qquad  {\rm for\ all}\ t\in[0,\infty),
\end{equation} 
where $C_{\gamma}$ and $c_{\gamma}$ are positive constants depending on $\gamma$, and
\begin{align}\label{hat}
\bar{\tau}=\int_{\mathcal{I}}\tau_0\mathrm{d}x,\ \  
\bar{\theta}=\int_{\mathcal{I}}\left[\theta_0+\frac{1}{2c_v}\left(u_0^2+v_0^2+w_0^2\right)\right]\mathrm{d}x,\ \ 
\bar{r}=\left[a^{m+1}+(m+1)\bar{\tau}x\right]^{\frac{1}{m+1}}.
\end{align} 
\end{theorem}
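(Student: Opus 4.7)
The plan is to combine a standard local-existence-plus-continuation argument with uniform-in-time a priori estimates, exploiting the Nishida--Smoller smallness $\gamma-1\ll 1$ in the only place where it is essential, namely to keep $\theta$ close to the equilibrium value so that the temperature-dependent coefficients $\mu(\theta)$, $\nu(\theta)$, $\kappa(\theta)$ behave essentially as constants. Once this is secured, the Lagrangian system \eqref{cns} becomes uniformly parabolic in $(u,v,w,\theta)$ and the $\tau$-equation \eqref{cns.a} is driven by a coercive source, so the global analysis proceeds along the classical Kazhikhov--Shelukhin line, extended in the spherical/cylindrical setting by Frid--Shelukhin and others.

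First I would construct a local classical solution in $C([0,T_*];H^3(\mathcal{I}))$ by linearising \eqref{cns} and applying a contraction mapping argument; the compatibility assumption on $(\tau_0,u_0,v_0,w_0,\theta_0)$ together with the strict positivity in \eqref{H2} make this routine, and $r(t,x)\in[a,b]$ is preserved for short time since $r_0$ is smooth and monotone. Global extension then reduces to establishing the pointwise bounds \eqref{thm_C2} together with an $H^3$ a priori estimate, so that the usual continuation argument closes.

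Second, I would derive the basic conservation identities: $\int_{\mathcal{I}}\tau\,dx=\bar\tau$, conservation of total energy $\int_{\mathcal{I}}[c_v\theta+\tfrac12(u^2+v^2+w^2)]\,dx=c_v\bar\theta$, and (in the cylindrical case) of angular and axial momenta. Because $c_v=R/(\gamma-1)$ is large, this energy identity combined with the entropy dissipation obtained by multiplying \eqref{cns.e} by $1-\bar\theta/\theta$ forces $\|\theta-\bar\theta\|_{L^\infty_{t,x}}$ to remain small provided $\gamma-1$ is small and $\Pi_0$, $V_0$ are fixed. This is the mechanism by which the temperature-dependent transport coefficients effectively become constant throughout the evolution.

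Third, and this is the hard step, I would obtain the uniform bounds $C_1^{-1}\le\tau\le C_1$. Following the approach of \cite{LYZZ14MR3225502} in the one-dimensional case and adapting the geometric manipulations of \cite{FS00MR1759201}, I would introduce the effective viscous flux $\sigma=\nu(\theta)(r^m u)_x/\tau-P$, derive from \eqref{cns.a}--\eqref{cns.b} a Kazhikhov--Shelukhin-type representation of $\tau$ as an exponential of time integrals of $\sigma$, $u$ and geometric remainders proportional to $r^{m-1}$, and close the bound using the smallness of $\theta-1$ from the previous step. The main technical obstacle lies here: the dependence of $\nu$ on $\theta$ spoils the clean identity of \cite{KS77MR0468593}, and one must continually trade errors of order $|\theta-1|$ for powers of $\gamma-1$; the symmetric geometry further contributes the terms $-v^2/r$ and $2mr^{m-1}u\mu_x$ in \eqref{cns.b} which must be absorbed using $r\geq a>0$ and the energy dissipation. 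Finally, with $\tau$ and $\theta$ pointwise controlled, I would run successive $H^1$, $H^2$, $H^3$ energy estimates on $(\tau-\bar\tau,u,v,w,\theta-\bar\theta)$, weighted by $\mathrm{e}^{c_\gamma t}$ for small $c_\gamma>0$. Exponential stability at the linear level is guaranteed by Poincaré's inequality for $(u,v,w)$ via the Dirichlet conditions and for $\theta-\bar\theta$ via the mean-value conservation together with the Neumann condition; the nonlinear terms are absorbed by a bootstrap using the smallness established above. The $H^2$ decay of $r-\bar r$ then follows from the identity $r_x=r^{-m}\tau$ combined with the $H^1$ decay of $\tau-\bar\tau$.
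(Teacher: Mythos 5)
Your overall architecture (local existence, continuation, uniform a priori bounds, Poincar\'e-based decay) matches the paper's, but your proposal for the crucial step --- the pointwise bounds $C_1^{-1}\le\tau\le C_1$ --- takes a route that the paper explicitly identifies as unworkable here. You propose the Kazhikhov--Shelukhin representation of $\tau$ via the effective viscous flux $\sigma=\nu(\theta)(r^mu)_x/\tau-P$. That representation hinges on $\nu$ being constant: one needs $\nu\,\partial_t\ln\tau=\sigma+P$ to integrate exactly in time and to pull the time derivative $\partial_t\!\int u$ out of $\int_0^t\sigma\,\mathrm{d}s$ after integrating \eqref{cns.b} in $x$. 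With $\nu=\nu(\theta)$ one instead gets $\ln\tau(t)=\ln\tau_0+\int_0^t(\sigma+P)/\nu(\theta)\,\mathrm{d}s$, and the factor $\nu(\theta(s,x))^{-1}$ inside the time integral destroys the exact-derivative structure; the commutator errors involve $\nu'(\theta)\theta_t$ and $\mu'(\theta)\theta_x$ integrated over $(0,t)$, which at this stage are only controlled in $L^2_t$ with constants degenerating as $\gamma\to1$ is traded against $N$ and $M$, and there is no clean way to close. The paper instead proves Lemma~\ref{lem_taux} by writing the momentum equation as an evolution equation for $\nu\tau_x/\tau$ (identity \eqref{id2.2}), testing with $\nu\tau_x/\tau$ to bound $\sup_t\|\tau_x/\tau\|$, and then applies Kanel$'$'s functional $\Phi(\tau)=\int_1^{\tau/\bar\tau}\sqrt{\phi(z)}/z\,\mathrm{d}z$ together with the entropy bound on $\|\sqrt{\phi(\tau/\bar\tau)}\|$ to convert this into two-sided pointwise bounds (Lemma~\ref{lem_kanel}). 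You would need to replace your step three by this (or an equivalent) argument.

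Two further points. First, your claim that the energy identity plus entropy dissipation alone forces $\|\theta-\bar\theta\|_{L^\infty_{t,x}}$ small is not sufficient as stated: the entropy estimate only yields $c_v\int\phi(\theta)\lesssim1$, i.e.\ $L^2_x$-smallness of $\theta-1$ of order $\sqrt{\gamma-1}$; the $L^\infty$ control requires in addition the uniform bound on $\|\sqrt{c_v}\theta_x(t)\|$, which is part of the a priori class and must be recovered by the higher-order estimates (this is the bootstrap the paper runs through the set $X(0,T;M,N)$). Second, for the $H^2$ and $H^3$ estimates you cannot simply integrate by parts as in the whole-space case of \cite{LYZZ14MR3225502}: the boundary conditions \eqref{bdy} obstruct this, and the paper instead estimates the time derivatives $(u_t,v_t,w_t,\theta_t)$ and $(u_{xt},v_{xt},w_{xt},\theta_{xt})$ and recovers $(\tau_{xx},u_{xx},\dots)$ and the third-order derivatives from the equations; your sketch does not address this.
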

\begin{remark}
 The techniques in this paper can be applied to obtain analogous results for the initial boundary value problem \eqref{cns}--\eqref{initial0} with the following boundary conditions
 \begin{align*}
 (u,v,w)|_{x=0,1}=0,\quad
 \theta|_{x=0,1}=1,\qquad t\geq 0.
 \end{align*}  
\end{remark}
\begin{remark}
 We deduce from \eqref{H2} and \eqref{thm_C2} that no vacuum will be developed if the initial data do not contain a vacuum.
 It follows from 
 Sobolev's imbedding theorem that the unique solution constructed in Theorem \ref{thm} is a globally smooth non-vacuum solution with large initial data. 
 Moreover, this result in Lagrangian coordinates can easily be converted to an equivalent statement for the corresponding problem in Eulerian coordinates.
\end{remark}

Now we outline the main ideas to deduce Theorem \ref{thm}.
As shown in \cite{J96MR1389908,FS00MR1759201,N83MR809891},
the crucial step to construct the global solutions for the initial boundary value problem \eqref{cns}--\eqref{bdy} with large initial data is to obtain the positive upper and lower bounds of the specific volume $\tau$ and the temperature $\theta$.
In the case of constant viscosity coefficients $\mu$ and $\lambda$, the pointwise bounds for the specific volume $\tau$ and the upper bound for the temperature $\theta$ can be obtained by modifying the argument in \citet{KS77MR0468593}. 
The positive lower bound for the temperature then follows from the standard maximum principle.
However, we find that this methodology does not work in the case of density/temperature dependent viscosity.

For ideal polytropic gases \eqref{cns}--\eqref{ideal0}, the temperature $\theta$ satisfies
\begin{equation*}
c_v\theta_t+\frac{R\theta(r^m u)_x}{\tau}=
\left[\frac{\kappa r^{2m} \theta_x}{\tau}\right]_x+\mathcal{Q}.
\end{equation*}
It can be expected to get the uniform-in-time bounds for $\|\sqrt{c_v}(\theta-1,\theta_t)(t)\|_{H^1(\mathcal{I})}$ by very careful energy estimates even when the viscosity and thermal conductivity coefficients are functions of the temperature.
Recalling that $c_v=R/(\gamma-1)$, under the \emph{a priori} assumption that $\gamma-1$ is sufficiently small, we can use the smallness of $\|(\theta-1,\theta_t)(t)\|_{H^1(\mathcal{I})}$ to handle the possible growth of solutions induced by the temperature dependence of the viscosity. 
The bounds for the specific volume $\tau$ from below and above can be established by developing the argument by \citet{K68MR0227619} (see Lemma~\ref{lem_kanel}).

There are three main differences between our results for the symmetric flows \eqref{cns} and the work in \cite{LYZZ14MR3225502} restricted to the one-dimensional case.
  \begin{list}{}{\setlength{\parsep}{\parskip}
  		\setlength{\itemsep}{0.1em}
  		\setlength{\labelwidth}{2em}
  		\setlength{\labelsep}{0.4em}
  		\setlength{\leftmargin}{2.2em}
  		\setlength{\topsep}{1mm}
  	}
  	\item[1.]
First, the system \eqref{cns} under consideration here is more complicated than the one-dimensional compressible Navier--Stokes system (i.e.\;the system \eqref{cns} with $m=v=w=0$) which makes the form and treatment of equations simpler.
\item[2.]
The second difference is that the analysis in \cite{LYZZ14MR3225502} takes place in the whole space without boundary. Thus the energy estimates for the two-order and three-order derivatives of the solutions ($\|u_{xx}(t)\|_{L^2(\mathbb{R})}$ etc.) can be deduced directly from integrating by parts (see, for instance, \cite[Page\;2205,\;(3.41)]{LYZZ14MR3225502} for the estimate of $\|u_{xx}(t)\|_{L^2(\mathbb{R})}$).
Due to the presence of the boundary conditions \eqref{bdy} in our case, we cannot use the methods in \cite{LYZZ14MR3225502} to estimate the two-order and three-order derivatives of the solutions $(\tau,u,v,w,\theta)$. 
To overcome such a difficulty, we make the estimates for $(u_t,v_t,w_t,\theta_t)$ and $(u_{xt},v_{xt},w_{xt},\theta_{xt})$, which yield the  bounds for $\|(\tau_{xx},u_{xx},v_{xx},w_{xx},\theta_{xx})(t)\|_{H^1(\mathcal{I})}$.
\item[3.]
A third difference concerns the assumptions on the initial data. 
In \citet{LYZZ14MR3225502}, $\|\sqrt{c_v}(\theta_{0}-1)\|_{H^3(\mathbb{R})}$ is required to be bounded by some $(\gamma-1)$-independent positive constant. 
According to this assumption, the $H^1$-norm of $\theta_{0xx}$ has to be small because of the smallness of $\gamma-1$. 
In Theorem \ref{thm}, $\|\theta_{0xx}\|_{H^1(\mathcal{I})}$ is assumed in \eqref{H1} to be bounded by some $(\gamma-1)$-independent positive constant. Thus $\|\theta_{0xx}\|_{H^1(\mathcal{I})}$ can be large even when the adiabatic exponent $\gamma$ goes to $1$. 
\end{list}

The rest of this paper is organized as follows. 
First, in Section \ref{sec2}, we derive a number of desired \emph{a priori} estimates. More specifically, the basic energy estimate is obtained in Subsection \ref{subsec1};
the uniform-in-time pointwise bounds of the specific volume $\tau$ is shown in Subsection \ref{subsec2} by applying the argument developed by Kanel$'$; the estimates on first-order, second-order, and third-order derivatives of the solution $(\tau,u,v,w,\theta)$ will be deduced in Subsections \ref{subsec3}, \ref{subsec4}, and \ref{subsec5}, respectively. Finally, in Section 3, by combining the a priori estimates and the continuation argument, we prove the existence, uniqueness and exponential decay rate of global-in-time solutions for the problem \eqref{cns}--\eqref{bdy}.
 
 \bigbreak
 \noindent{\bf Notations}.
Throughout this paper, we use $\mathcal{I}:=(0,1)$. 
 For  $1\leq q\leq \infty$ and $k\in \mathbb{N}$, we denote by $L^q(\mathcal{I})$ the usual Lebesgue space on $\mathcal{I}$ equipped with the norm $\|{\cdot}\|_{L^q(\mathcal{I})}$ and by $H^k(\mathcal{I})$ the standard Sobolev space in the $L^2$ sense equipped with the norm $\|{\cdot}\|_{H^k(\mathcal{I})}$.
 For notational simplicity, we shall use
 $$\|{\cdot}\|:=\|{\cdot}\|_{L^2(\mathcal{I})},\quad
 \|{\cdot}\|_k:=\|{\cdot}\|_{H^k(\mathcal{I})},\quad \|{\cdot}\|_{L^q}:=\|{\cdot}\|_{L^q(\mathcal{I})}
 .$$
 If $I\subset\mathbb{R}$ and $X$ is a Banach space, then
 we denote by $C(I; X)$ the space of continuous functions on $I$ with values in $X$, by $L^q(I; X)$ the space of $L^q$-functions on $I$ with values in $X$, and by $\|{\cdot}\|_{L^q(I; X)}$  the norm of the space $L^q(I; X)$.
To simplify the presentation, we employ $C$, $c$, and $C_i$ ($i\in\mathbb{N}$) to denote various positive constants, depending only on  $\Pi_0$  and $V_0$, where  $\Pi_0$  and $V_0$ are determined by \eqref{H1}--\eqref{H2}. 
Hence $C$, $c$, and $C_i$ are independent of $\gamma-1$ and $t$.
The symbol $A\lesssim B$ (or $B\gtrsim A$) means that $A\leq C B$ holds uniformly for some $(\gamma-1)$-independent constant $C$.
We also use $C_{\gamma}$, $c_{\gamma}$, and $C_i(\gamma)$ ($i\in\mathbb{N}_+$) to denote positive constants depending on $\gamma$.

\section{A priori estimates} \label{sec2}
This section is devoted to deriving certain a priori estimates on the solutions $(\tau, u, v, w, \theta)\in X(0,T;M,N)$ to the initial boundary value problem \eqref{cns}--\eqref{bdy} with temperature dependent transport coefficients \eqref{transport} for $T>0$, $M\geq 1$, and  $N\geq 1$.
Here we define the set
\begin{equation*}
\begin{split}
&X(t_1,t_2 ;M,N):=\Big\{
(\tau,u,v,w,\theta)\in C([t_1,t_2];H^3(\mathcal{I})):\\[0.5mm]
&\quad\quad \tau_x\in L^2(t_1,t_2;H^2(\mathcal{I})),\
\tau_t\in C([t_1,t_2];H^2(\mathcal{I}))\cap L^2(t_1,t_2;H^2(\mathcal{I})),\\[0.5mm]
&\quad\quad   (u_t,v_t,w_t,\theta_t)\in C([t_1,t_2];H^1(\mathcal{I}))\cap L^2(t_1,t_2;H^2(\mathcal{I})),\
\mathcal{E}_{t_1}(t_2)\leq N^2,\\
&\quad\quad (u_x,v_x,w_x,\theta_x)\in L^2(t_1,t_2;H^3(\mathcal{I})),\
 \tau(t,x)\geq M^{-1} \ \forall\ (t,x)\in[t_1,t_2 ]\times \mathcal{I}
\Big\},  
\end{split}
\end{equation*}
for constants $M$, $N$, $t_1$, and $t_2$ ($t_1\leq t_2$),  where
\begin{align} \label{E}
\mathcal{E}_{t_1}(t_2):=\sup_{t\in[t_1,t_2]}\left\{
\|(u,\sqrt{c_v}(\theta-1),\sqrt{c_v}\theta_t)(t)\|_1^2
+\|\theta_{xx}(t)\|^2\right\}
+\int_{t_1}^{t_2}\left\|\sqrt{c_v}\theta_t(s)\right\|_1^2\mathrm{d}s,
\end{align}
with
\begin{align}
\label{initial_thet}
\theta_t|_{t=t_1}&:=\left.\frac{1}{c_v}\left[-P(r^m u)_x+\left[\frac{\kappa r^{2m} \theta_x}{\tau}\right]_x+\mathcal{Q}\right]\right|_{t=t_1},\\
\label{initial_thext}
\theta_{xt}|_{t=t_1}&:=\left.\frac{1}{c_v}\left[-P(r^m u)_x+\left[\frac{\kappa r^{2m} \theta_x}{\tau}\right]_x+\mathcal{Q}\right]_x\right|_{t=t_1}.
\end{align}

We set without loss of generality that $R=1$ and hence $c_v=1/(\gamma-1)$.
For the sake of simplicity, we will use the following abbreviation:
$$
\lrn\cdot\rrn:=\|{\cdot}\|_{L^\infty([0,T];L^{\infty}( \mathcal{I}))}.$$ 
Since $(\tau, u, v, w, \theta)\in X(0,T;M,N)$, it follows from  Sobolev's inequality that
\begin{align}
\label{apriori0}
&M^{-1}\leq \tau(t,x)\lesssim N,\quad
\|(\theta-1,\theta_t)(t)\|_1\lesssim  (\gamma-1)^{\frac{1}{2}}N\quad
\forall\ (t,x)\in[0,T]\times\mathcal{I},
\\ \label{apriori1}
&\lrn \theta_x \rrn\lesssim (\gamma-1)^{\frac{1}{4}}N,\quad
\lrn (\theta-1,\theta_t) \rrn \lesssim (\gamma-1)^{\frac{1}{2}}N,\quad
\int_{0}^{T}\|\theta_{t}(t)\|_1^2\mathrm{d}t\lesssim (\gamma-1)N^2.
\end{align}

We shall make repeated use of the following estimate:
 \begin{align}   \label{E_r}
 a\leq r(t,x)\leq b\quad
 \forall\ (t,x)\in[0,T]\times\mathcal{I},
 \end{align}
 which follows from \eqref{r_id} and \eqref{r_eq}.

\subsection{Basic energy estimate} \label{subsec1}
In the following lemma we show the basic energy estimate and the pointwise bounds for $\theta$.
\begin{lemma} \label{lem_bas}
 Assume that the conditions listed in Theorem \ref{thm} hold.
 Then there exists a positive constant $\epsilon_1$ depending only on  $\Pi_0$  and $V_0$, such that if
 \begin{equation}\label{apriori2}
 (\gamma-1)^{\frac{1}{4}} M^2 N^2\leq \epsilon_1,
 \end{equation}
 then 
 \begin{align}   \label{E_theta}
 & \tfrac{1}{2}\leq \theta(t,x)\leq 2\qquad
 \forall\ (t,x)\in[0,T]\times\mathcal{I},\\[1.5mm] \label{E_bas1} 
 &\sup_{t\in[0,T]}\left\|\left(\sqrt{\phi\left(\frac{\tau}{\bar{\tau}}\right)},u,v,w,\sqrt{c_v}(\theta-1)\right)(t)\right\|^2\lesssim 1,\\ \label{E_bas2}
 &\int_0^T\int_{\mathcal{I}}\left[m\tau u^2+\tau v^2+
 \frac{\tau_t^2+u_x^2+v_x^2+w_x^2+\theta_x^2}{\tau}\right]\mathrm{d}x\mathrm{d} t \lesssim 1,
 \end{align}
 where $\phi(z):=z-\ln z -1$ and $\bar{\tau}$ is given by \eqref{hat}.
\end{lemma}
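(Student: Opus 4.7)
The pointwise bound \eqref{E_theta} is immediate from the a priori information: Sobolev embedding on $\mathcal{I}$ combined with \eqref{apriori1} gives $\lrn\theta-1\rrn\lesssim (\gamma-1)^{1/2}N$, and the smallness condition \eqref{apriori2} forces $|\theta-1|\leq 1/2$ provided $\epsilon_1$ is chosen sufficiently small. For the remaining assertions \eqref{E_bas1}--\eqref{E_bas2}, the plan is to build the relative-entropy Lyapunov functional
\begin{equation*}
\Phi(t):=\int_{\mathcal{I}}\Bigl[\tfrac{1}{2}(u^2+v^2+w^2)+c_v\bar{\theta}\,\phi(\theta/\bar{\theta})+R\bar{\theta}\,\phi(\tau/\bar{\tau})\Bigr]\mathrm{d}x,
\end{equation*}
which is non-negative and vanishes precisely at the equilibrium $(\bar{\tau},0,0,0,\bar{\theta})$ selected by the conservation laws.

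The governing identity for $\Phi$ is obtained by combining three elementary manipulations. First, multiplying \eqref{cns.b}--\eqref{cns.d} by $u,v,w$, integrating over $\mathcal{I}$, and adding the integrated energy equation \eqref{cns.e}: the boundary terms vanish thanks to \eqref{bdy}, the convective-rotational term $\pm uv^2/r$ cancels between the $u$- and $v$-equations, and the remaining viscous work combines with $\mathcal{Q}$ to give total-energy conservation $\int_{\mathcal{I}}[\tfrac12(u^2+v^2+w^2)+c_v\theta]\,\mathrm{d}x = c_v\bar{\theta}$. Second, integrating \eqref{cns.a} over $\mathcal{I}$ yields $\int_{\mathcal{I}}\tau\,\mathrm{d}x=\bar{\tau}$. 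Third, dividing \eqref{cns.e} by $\theta$ and integrating produces the entropy identity
\begin{equation*}
\frac{d}{dt}\int_{\mathcal{I}}[c_v\ln\theta+R\ln\tau]\,\mathrm{d}x = \int_{\mathcal{I}}\Bigl[\frac{\kappa r^{2m}\theta_x^2}{\tau\theta^2}+\frac{\mathcal{Q}}{\theta}\Bigr]\mathrm{d}x.
\end{equation*}
Using the two conservation laws to eliminate the linear parts of $\theta$ and $\tau$ then collapses these identities into the Lyapunov identity
\begin{equation*}
\frac{d\Phi}{dt}+\bar{\theta}\int_{\mathcal{I}}\Bigl[\frac{\kappa r^{2m}\theta_x^2}{\tau\theta^2}+\frac{\mathcal{Q}}{\theta}\Bigr]\mathrm{d}x = 0.
\end{equation*}

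The final step is to verify $\Phi(0)\lesssim 1$ uniformly in $\gamma-1$ and to extract \eqref{E_bas2} from the dissipation integral. Evaluating total-energy conservation at $t=0$ gives $c_v(\bar{\theta}-1)=c_v\int(\theta_0-1)\,\mathrm{d}x+\tfrac12\int(u_0^2+v_0^2+w_0^2)\,\mathrm{d}x$, whence \eqref{H1} yields $|\bar{\theta}-1|\lesssim (\gamma-1)^{1/2}$ and $c_v(\bar{\theta}-1)^2\lesssim 1$. Because $\phi(z)\lesssim (z-1)^2$ on any bounded interval away from $0$, one has $c_v\bar{\theta}\phi(\theta_0/\bar{\theta})\lesssim c_v(\theta_0-1)^2+c_v(\bar{\theta}-1)^2$, whose $\mathcal{I}$-integral is $\lesssim 1$ by the weighted bound $\|\sqrt{c_v}(\theta_0-1)\|_{H^1}\leq \Pi_0$; the remaining pieces of $\Phi(0)$ are controlled directly by \eqref{H1}--\eqref{H2}. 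Therefore $\Phi(t)\leq \Phi(0)\lesssim 1$, and combining with \eqref{E_theta} (so that $\phi(\theta/\bar{\theta})\gtrsim(\theta-\bar{\theta})^2$) and $c_v(\bar{\theta}-1)^2\lesssim 1$ yields \eqref{E_bas1}. For \eqref{E_bas2}, integrating the Lyapunov identity in time produces $\int_0^T\!\int_{\mathcal{I}}[\kappa r^{2m}\theta_x^2/(\tau\theta^2)+\mathcal{Q}/\theta]\,\mathrm{d}x\,\mathrm{d}t\lesssim 1$; the heat-flux piece bounds $\iint \theta_x^2/\tau$, while $\mathcal{Q}$ is a positive-definite quadratic form in the strain rates (a consequence of $2\mu+d\lambda>0$ when $\mathcal{Q}$ is identified with the Eulerian dissipation $\mathbb{S}{:}\nabla\bm u$), so after expanding with $\tau_t=(r^m u)_x=r^m u_x+mu\tau/r$ and using the pointwise bounds on $r,\tau,\theta$, one recovers the remaining estimates on $\tau_t^2/\tau$, $u_x^2/\tau$, $v_x^2/\tau$, $w_x^2/\tau$, $m\tau u^2$ and $\tau v^2$. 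The principal obstacle is precisely the uniform-in-$\gamma-1$ estimate $\Phi(0)\lesssim 1$: the relative-entropy term carries the large prefactor $c_v=(\gamma-1)^{-1}$, and its control requires the weighted hypothesis \eqref{H1} together with the near-equilibrium closeness of $\bar{\theta}$ to $1$.
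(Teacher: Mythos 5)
Your treatment of \eqref{E_theta}, the derivation of the relative-entropy identity (your route via total-energy conservation, $\int_{\mathcal{I}}\tau\,\mathrm{d}x=\bar{\tau}$, and the entropy balance is equivalent to the paper's direct multiplier computation \eqref{id1.1}--\eqref{id_bas}, just centered at $\bar{\theta}$ instead of $\hat{\theta}=1$), and the uniform-in-$(\gamma-1)$ bound on $\Phi(0)$ are all sound and match the paper's strategy in substance.

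The gap is in the last step, where you claim that positive-definiteness of $\mathcal{Q}$ ``after expanding \dots and using the pointwise bounds on $r,\tau,\theta$'' recovers the separate estimates on $\tau v^2$ and $v_x^2/\tau$. The azimuthal contribution to $\mathcal{Q}$ is the single perfect square $\mu\tau\bigl[\tfrac{r^m v_x}{\tau}-\tfrac{v}{r^m}\bigr]^2$, whose kernel contains the rigid-rotation modes solving $r^m v_x/\tau=v/r^m$; the Eulerian dissipation is only positive \emph{semi}-definite in $(\tilde{v}_r,\tilde{v}/r)$, so no pointwise manipulation can extract $\tau v^2+v_x^2/\tau$ from it. To split the square one must expand and integrate the cross term $-2\mu(\theta)vv_x/\theta$ by parts; since $\mu$ depends on $\theta$ this is not an exact derivative and produces the commutator $\int_{\mathcal{I}}\frac{\mathrm{d}}{\mathrm{d}\theta}\bigl[\frac{\mu(\theta)}{\theta}\bigr]\theta_x v^2$, which is absorbed only by invoking $\lrn\theta_x\rrn\lesssim(\gamma-1)^{1/4}N$ together with $\tau\geq M^{-1}$ and the smallness hypothesis \eqref{apriori2} --- this is precisely the paper's estimate \eqref{est1.2}, and it is the main place where the temperature dependence of the viscosity enters this lemma. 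Your proof never uses \eqref{apriori2} beyond the bounds on $\theta$, which signals the omission. (By contrast, your identification of the $u$-part with the Eulerian dissipation is legitimate: the quadratic form $\nu a^2+2m\lambda ab+(2m\mu+m^2\lambda)b^2$ in $a=r^mu_x$, $b=\tau u/r$ has discriminant $2\mu m(2\mu+d\lambda)>0$, which is exactly what the paper's $\sigma$-completion of the square \eqref{est1.1} exhibits; you should still record this computation rather than assert it.)
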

\begin{proof}
The estimate \eqref{E_theta} follows immediately from \eqref{apriori1}.
 In order to prove \eqref{E_bas1} and \eqref{E_bas2}, we deduce an entropy-type energy estimate for the initial boundary value problem \eqref{cns}--\eqref{bdy}.
 Let $\hat{\theta}>0$ be an arbitrary but fixed constant.
 Multiplying \eqref{cns.a}, \eqref{cns.b}, \eqref{cns.c}, \eqref{cns.d}, and \eqref{cns.e} by $\hat{\theta}\left(\bar{\tau}^{-1}-\tau^{-1}\right)$, $u$, $v$, $w$, and $\big(1-\hat{\theta}\theta^{-1}\big)$, respectively, we have
 \begin{align}
 \eta_{\hat{\theta}}(\tau,u,v,w,\theta)_t
 +\frac{\hat{\theta}\kappa r^{2m}\theta_x^2}{\tau\theta^2}
 +\frac{\hat{\theta}\mathcal{Q}}{\theta}=\mathcal{R}_x, \label{id1.1}
 \end{align}
 where 
 \begin{align}
 \label{eta}
 \eta_{\hat{\theta}}(\tau,u,v,w,\theta):=\hat{\theta}\phi\left(\frac{\tau}{\bar{\tau}}\right)+\tfrac{1}{2}
 \left(u^2+v^2+w^2\right)+c_v\hat{\theta}\phi\left(\frac{\theta}{\hat{\theta}}\right)
 \end{align}
 is the relative entropy (see, for instance, \cite{NYZ04MR2083790}), $\mathcal{Q}$ is given by \eqref{Q}, and
 \begin{align} \notag
 \mathcal{R}:=~&\frac{\nu r^m u\left(r^m u\right)_x}{\tau}+
 \left[1-\frac{\hat{\theta}}{\theta}\right]
 \frac{\kappa r^{2m}\theta_x}{\tau}
 + r^m u\left[\frac{\hat{\theta}}{\bar{\tau}}-\frac{\theta}{\tau}\right]\\ \notag
 &-2m\mu r^{m-1} u^2
 +\frac{\mu r^{2m} (vv_x+ww_x)}{\tau}-m \mu r^{m-1} v^2.
 \end{align}
 Thanks to the boundary conditions \eqref{bdy}, we integrate \eqref{id1.1} over $\mathcal{I}$ to obtain
  \begin{align}
  \frac{\mathrm{d}}{\mathrm{d}t}\int_{\mathcal{I}} \eta_{\hat{\theta}}(\tau,u,v,w,\theta)
  +\int_{\mathcal{I}}\hat{\theta}\left[\frac{\kappa r^{2m}\theta_x^2}{\tau\theta^2}
  +\frac{\mathcal{Q}}{\theta}\right]=0.  \label{id_bas}
  \end{align}
  By virtue of \eqref{r_eq}, we infer that, for $\sigma>0$, 
  \begin{align*}
  &\nu\left(r^m u\right)_x^2-
  2m\mu \tau\left(r^{m-1} u^2\right)_x =\nu r^{2m} u_x^2+(2m\mu+
  m^2 \lambda)\frac{\tau^2 u^2}{r^2}+2m \lambda r^{m-1}\tau u u_x\\
  &\quad =\left[\frac{m\lambda r^m u_x}{\sqrt{\sigma}}+
  \frac{\sqrt{\sigma} \tau u}{r}\right]^2+
  \left[\nu-\frac{m^2\lambda^2}{\sigma}\right]r^{2m}u_x^2+
  \left[2m\mu+m^2\lambda -\sigma\right]\frac{\tau^2u^2}{r^2}\\
  &\quad \geq \left[\nu-\frac{m^2\lambda^2}{\sigma}\right]r^{2m}u_x^2
  +\left[2m\mu+m^2\lambda -\sigma\right]\frac{\tau^2u^2}{r^2}.
  \end{align*}
In light of \eqref{transport} and \eqref{E_theta}, 
 we can choose $\sigma =m(m\lambda^2+2\mu^2+(m+1)\mu\lambda)/\nu>0$ so that
\begin{equation} \label{est1.1}
\frac{\nu\left(r^m u\right)_x^2-2m\mu \tau\left(r^{m-1} u^2\right)_x}{\tau\theta}\geq \frac{\beta}{\tau}\left[r^{2m}u_x^2+\frac{m \tau^2 u^2}{r^2}\right],
\end{equation}
where $\beta$ is given by 
\begin{align*} 
&\beta:=\frac{2\mu+(m+1)\lambda}{\theta}\min\left\{\frac{\mu \nu}{m\lambda^2+2\mu^2+(m+1)\mu\lambda},\frac{\mu}{\nu}\right\}.
\end{align*}
Using \eqref{bdy}, \eqref{apriori0}, \eqref{apriori1}, and \eqref{E_theta}, we integrate by parts to find
\begin{equation*} 
2\int_{\mathcal{I}} \frac{\mu(\theta)}{\theta}vv_x
=-\int_{0}^{T}\int_{\mathcal{I}}
\frac{\mathrm{d}}{\mathrm{d}\theta}\left[\frac{\mu(\theta)}{\theta}\right]\theta_xv^2
\lesssim (\gamma-1)^{\frac{1}{4}}NM\int_{\mathcal{I}} \tau v^2,
\end{equation*} 
which combined with \eqref{apriori2} and \eqref{E_theta} implies
\begin{align}\label{est1.2}
\int_{\mathcal{I}}\frac{\mu \tau}{\theta}\left[\frac{r^m v_x}{\tau}-\frac{v}{r^m}\right]^2\gtrsim 
\int_{\mathcal{I}}\frac{v_x^2}{\tau}+(1-\epsilon_1)\int_{\mathcal{I}}\tau v^2.
\end{align}
Plug \eqref{est1.1} and \eqref{est1.2} into \eqref{id_bas}, 
and use \eqref{E_theta} to derive that if \eqref{apriori2} holds for some sufficiently small $\epsilon_1>0$, then
\begin{align}
\frac{\mathrm{d}}{\mathrm{d}t}\int_{\mathcal{I}} \eta_{\hat{\theta}}(\tau,u,v,w,\theta)
+c\int_{\mathcal{I}}\hat{\theta}\left[m\tau u^2+\tau v^2+
\frac{u_x^2+v_x^2+w_x^2+\theta_x^2}{\tau}\right]
\leq 0.  \label{est1}
\end{align} 
It follows from \eqref{E_theta}, \eqref{cns.a}, and \eqref{r_eq} that 
\begin{align*}
\phi(\theta)\gtrsim (\theta-1)^2\quad{\rm and}\quad
\frac{\tau_t^2}{\tau}\lesssim m\tau u^2+\frac{u_x^2}{\tau}.
\end{align*}
If we take $\hat{\theta}=1$, by virtue of the conditions assumed in Theorem \ref{thm}, we infer
\begin{align*}
\int_{\mathcal{I}} \eta_{1}(\tau_0,u_0,v_0,w_0,\theta_0)\lesssim 1.
\end{align*}
Integrating \eqref{est1} with $\hat{\theta}=1$ over $(0,t)$ yields the estimates \eqref{E_bas1} and \eqref{E_bas2}.
\end{proof}
 
\subsection{Pointwise bounds for the specific volume} \label{subsec2}
In this subsection we employ the argument developed by Kanel$'$ \cite{K68MR0227619,LYZZ14MR3225502} to obtain the uniform bounds for the specific volume $\tau$.
To this end, we first make the estimate for $\|{\tau_x}/{\tau}\|$  in the following lemma.  
\begin{lemma} \label{lem_taux}
If the conditions listed in Lemma \ref{lem_bas} hold for a sufficiently small $\epsilon_1$, then 
\begin{equation} \label{E_taux}
\sup_{t\in[0,T]}\left\|\frac{\tau_x}{\tau}(t)\right\|^2
+\int_0^T\int_{\mathcal{I}}\frac{\tau_x^2}{\tau^3}
\lesssim 1+\lrn\ln \tau\rrn.
\end{equation}
\end{lemma}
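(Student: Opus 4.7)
The plan is to apply Kanel'-type analysis: multiply the momentum equation by $\tau_x/\tau$ after rewriting its viscous term so that a time derivative of $\tau_x/\tau$ appears. Using $(r^m u)_x=\tau_t$ from \eqref{cns.a} and the commutativity of mixed partials, the viscous term rewrites as $r^m\nu(\tau_x/\tau)_t+r^m\nu_x\tau_t/\tau$; combining this with $P=\theta/\tau$ and dividing \eqref{cns.b} by $r^m$ yields
\[
\nu\Bigl(\frac{\tau_x}{\tau}\Bigr)_t+\frac{\theta\tau_x}{\tau^2}=\frac{u_t}{r^m}-\frac{v^2}{r^{m+1}}+\frac{\theta_x}{\tau}-\frac{\nu_x\tau_t}{\tau}+\frac{2mu\mu_x}{r}.
\]
Multiplying by $\tau_x/\tau$ and integrating over $\mathcal{I}$ produces $\tfrac12\tfrac{\mathrm{d}}{\mathrm{d}t}\!\int\nu(\tau_x/\tau)^2+\int\theta\tau_x^2/\tau^3$ on the left, with the correction $-\tfrac12\!\int\nu'(\theta)\theta_t(\tau_x/\tau)^2$ absorbed using the smallness of $\lrn\theta_t\rrn$ from \eqref{apriori1}--\eqref{apriori2}.

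The principal obstacle is $\int u_t\tau_x/(r^m\tau)\,\mathrm{d}x$, which I rewrite as a full time derivative plus corrections:
\[
\int_{\mathcal{I}}\!\frac{u_t\tau_x}{r^m\tau}\mathrm{d}x=\frac{\mathrm{d}}{\mathrm{d}t}\!\int_{\mathcal{I}}\!\frac{u\tau_x}{r^m\tau}\mathrm{d}x-\int_{\mathcal{I}} u\,\partial_t\!\Bigl(\frac{\tau_x}{r^m\tau}\Bigr)\mathrm{d}x.
\]
Substituting $\tau_{xt}=(r^m u)_{xx}$, using $r_x=\tau/r^m$, and integrating by parts once in $x$ (the boundary terms vanish since $u|_{\partial\mathcal{I}}=0$) produces cancellation of the $u\tau_t\tau_x/(r^m\tau^2)$ terms, leaving
\[
\int_{\mathcal{I}}\!\frac{u_t\tau_x}{r^m\tau}\mathrm{d}x=\frac{\mathrm{d}}{\mathrm{d}t}\!\int_{\mathcal{I}}\!\frac{u\tau_x}{r^m\tau}\mathrm{d}x+\int_{\mathcal{I}}\!\frac{u_x\tau_t}{r^m\tau}\mathrm{d}x-m\!\int_{\mathcal{I}}\!\frac{u\tau_t}{r^{2m+1}}\mathrm{d}x+m\!\int_{\mathcal{I}}\!\frac{u^2\tau_x}{r^{m+1}\tau}\mathrm{d}x.
\]
After time-integrating, the boundary-time contribution at $s=t$ is controlled by $\tfrac14\|\tau_x/\tau\|^2+C\|u\|^2$ via Cauchy--Schwarz and absorbed on the left; the $u_x\tau_t$ and $u\tau_t$ integrals are handled using the basic dissipation $\int_0^T\!\int(u_x^2/\tau+\tau_t^2/\tau+\tau u^2)\lesssim 1$ from \eqref{E_bas2} (for the third term, substituting $\tau_t=(r^m u)_x$ and a further integration by parts in $x$ converts it into a multiple of $\int u^2\tau/r^{2m+2}$, controlled by $\int\tau u^2$).

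The $\lrn\ln\tau\rrn$ factor in \eqref{E_taux} originates from the remaining term $m\!\int_0^t\!\int u^2\tau_x/(r^{m+1}\tau)$: using $\tau_x/\tau=(\ln\tau)_x$ and integrating by parts (again exploiting $u|_{\partial\mathcal{I}}=0$),
\[
\int_{\mathcal{I}}\!\frac{u^2\tau_x}{r^{m+1}\tau}\mathrm{d}x=-2\!\int_{\mathcal{I}}\!\frac{uu_x\ln\tau}{r^{m+1}}\mathrm{d}x+(m{+}1)\!\int_{\mathcal{I}}\!\frac{u^2\tau\ln\tau}{r^{2m+2}}\mathrm{d}x,
\]
so that pulling out $\lrn\ln\tau\rrn$ and applying Cauchy--Schwarz with the basic dissipation $\int_0^T\!\int(u_x^2/\tau+\tau u^2)\lesssim 1$ yields a bound of the form $C\lrn\ln\tau\rrn$. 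The remaining RHS terms---those involving $v^2\tau_x/(r^{m+1}\tau)$, $\theta_x\tau_x/\tau^2$, $\nu_x\tau_t\tau_x/\tau^2$, and $u\mu_x\tau_x/(r\tau)$---are estimated by Cauchy--Schwarz with $\varepsilon$-absorption into $\int\theta\tau_x^2/\tau^3$, exploiting \eqref{E_theta}, \eqref{E_bas2}, and the smallness of $\lrn\theta_x\rrn$ from \eqref{apriori1} to control the $\nu_x$- and $\mu_x$-contributions. Collecting all bounds, using the initial-data estimate $\|\tau_{0x}/\tau_0\|\lesssim 1$ from \eqref{H1}--\eqref{H2}, and choosing $\varepsilon$ sufficiently small gives \eqref{E_taux}. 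The main technical subtlety lies in the cancellations that eliminate $u_t$ from the identity and reveal the $u^2\tau_x/(r^{m+1}\tau)$ remainder, whose integration by parts produces the $\lrn\ln\tau\rrn$ factor in a controlled manner.
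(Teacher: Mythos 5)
Your identity and overall strategy coincide with the paper's: the paper forms the functional $\int_{\mathcal{I}}\bigl[\tfrac12(\nu\tau_x/\tau)^2-\tfrac{u}{r^m}\tfrac{\nu\tau_x}{\tau}\bigr]$ directly, which is exactly your device of writing $\int u_t\tau_x/(r^m\tau)$ as $\frac{\mathrm{d}}{\mathrm{d}t}\int u\tau_x/(r^m\tau)$ plus corrections; your cancellation of the $u\tau_x\tau_t/(r^m\tau^2)$ terms, the treatment of $\int u_x\tau_t/(r^m\tau)$ and $m\int u\tau_t/r^{2m+1}$ via the basic dissipation, and the $(\ln\tau)_x$ integration by parts on $m\int u^2\tau_x/(r^{m+1}\tau)$ all reproduce the paper's $\mathcal{K}_1$ and the $u^2$-part of $\mathcal{K}_3$.

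There is, however, one step that fails as written: the term $\int v^2\tau_x/(r^{m+1}\tau)$ cannot be handled by Cauchy--Schwarz with $\varepsilon$-absorption into $\int\theta\tau_x^2/\tau^3$. That absorption leaves $C(\varepsilon)\int_0^t\int_{\mathcal{I}}v^4\tau$, and at this stage of the argument the only information on $v$ is $\sup_t\|v(t)\|\lesssim1$ from \eqref{E_bas1} and $\int_0^T\int[\tau v^2+v_x^2/\tau]\lesssim1$ from \eqref{E_bas2}; there is no $(M,N)$-independent bound on $\lrn v\rrn$ or on $\sup_t\|v_x(t)\|$ (the set $X(0,T;M,N)$ controls only the $u$- and $\theta$-derivatives through $\mathcal{E}_0$). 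Any attempt to estimate $\int_0^t\int v^4\tau$ via $\|v\|_{L^\infty}^2\lesssim\|v\|\,\|v\|_1$ therefore produces constants depending on $N$ (or on the solution itself), which destroys the uniformity needed for Kanel$'$s argument in Lemma \ref{lem_kanel}. The fix is exactly the device you already use for the $u^2$ term: write $\tau_x/\tau=(\ln\tau)_x$ and integrate by parts, so that $\int v^2\tau_x/(r^{m+1}\tau)=-\int\ln\tau\,[v^2/r^{m+1}]_x$ is bounded by $\lrn\ln\tau\rrn\int[\tau v^2+v_x^2/\tau]$ (after Cauchy--Schwarz on $vv_x$), which is what the paper does in $\mathcal{K}_3$ by keeping $mu^2-v^2$ together. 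With that single correction your argument closes and matches the paper's proof.
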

\begin{proof}
According to the chain rule, we have
\begin{equation}
\label{id2.1}
\left[\frac{\nu \tau_x}{\tau}\right]_t=
\left[\frac{\nu \tau_t}{\tau}\right]_x+
\frac{\nu'(\theta)}{\tau}\left(\tau_x\theta_t-\tau_t\theta_x\right),
\end{equation} 
which combined with \eqref{cns.a} and \eqref{cns.b} implies
\begin{align} 
\left[\frac{\nu \tau_x}{\tau}\right]_t-\frac{u_t}{r^{m}}-P_x
=-\frac{v^2}{r^{m+1}}+\frac{2m u\mu_x}{r}+
\frac{\nu'(\theta)}{\tau}\left(\tau_x\theta_t-\tau_t\theta_x\right).
\label{id2.2}
\end{align}
Multiply \eqref{id2.2} by $\nu \tau_x/\tau$ and use \eqref{cns.a}, \eqref{id2.1} to deduce
\begin{align*} 
&\left[\frac{1}{2}\left(\frac{\nu \tau_x}{\tau}\right)^2-
\frac{u}{r^{m}}\frac{\nu \tau_x}{\tau}\right]_t
+\frac{\nu \theta\tau_x^2}{\tau^3}
+\left[\frac{u}{r^{m}}\frac{\nu \tau_t}{\tau}\right]_x-\left(\frac{u}{r^m}\right)_x \frac{\nu \tau_t}{\tau}
-\frac{\nu \tau_x\theta_x}{\tau^2}\\[0.5mm]
&\quad =\frac{\nu \tau_x}{\tau}\frac{m u^2-v^2}{r^{m+1}}
+\frac{2m u\mu_x}{r}\frac{\nu \tau_x}{\tau}
+\frac{\nu'(\theta)}{\tau^2}\left(\nu\tau_x-r^{-m}u\tau\right)
\left(\tau_x\theta_t-\tau_t\theta_x\right).
\end{align*}
Integrating the last identity, we get
\begin{align}
\frac{\mathrm{d}}{\mathrm{d}t}\int_{\mathcal{I}}
\left[\frac{1}{2}\left(\frac{\nu \tau_x}{\tau}\right)^2-
\frac{u}{r^{m}}\frac{\nu \tau_x}{\tau}\right]
+\int_{\mathcal{I}} \frac{\nu \theta \tau_x^2}{\tau^3}
=\sum_{q=1}^{7}\mathcal{K}_q, 
\label{id_taux}
\end{align}
where each term $\mathcal{K}_q$ in the decomposition will be defined and estimated below. First we consider the term
\begin{align*}
\mathcal{K}_1:=
\int_{\mathcal{I}}\left(\frac{u}{r^m}\right)_x\frac{\nu \tau_t}{\tau}.
\end{align*}
According to \eqref{r_eq} and \eqref{E_theta}, we infer
\begin{align}
\mathcal{K}_1=
\int_{\mathcal{I}} \frac{\nu \tau_t}{\tau} 
\left(\frac{u_x}{r^m}-\frac{m\tau u}{r^{2m+1}}\right)\lesssim 
\int_{\mathcal{I}}\left[\frac{u_x^2}{\tau}+
\frac{\tau_t^2}{\tau}+m\tau u^2\right].
\label{K1}
\end{align}
The second term
is defined and estimated as
\begin{align} 
\mathcal{K}_2:=\int_{\mathcal{I}}\frac{\nu \tau_x\theta_x}{\tau^2}\leq
\epsilon \int_{\mathcal{I}} \frac{ \tau_x^2}{\tau^3}
+C(\epsilon) \int_{\mathcal{I}}\frac{\theta_x^2}{\tau }.
\label{K2}
\end{align}
For the term 
\begin{align*}
\mathcal{K}_3:=
\int_{\mathcal{I}}\frac{\nu  \tau_x}{\tau}\frac{m u^2-v^2}{r^{m+1}},
\end{align*}
by virtue of the boundary conditions \eqref{bdy},
we integrate by parts and use \eqref{r_eq}, \eqref{E_theta} to derive
\begin{align*}
\mathcal{K}_3=-\int_{\mathcal{I}} 
\ln\tau \left[\frac{\nu}{r^{m+1}}(m u^2-v^2)\right]_x\lesssim
\lrn \ln \tau\rrn
\int_{\mathcal{I}} \left|\left(m\theta_x u^2,\theta_x v^2,
m\tau u^2,\tau v^2,mu u_x,vv_x\right)\right|.
\end{align*}
In view of \eqref{apriori0} and \eqref{apriori1}, we have
\begin{align}
\mathcal{K}_3\lesssim \lrn \ln \tau\rrn\left[1+
(\gamma-1)^{\frac{1}{4}}NM\right]
\int_{\mathcal{I}}\left[m\tau u^2+\tau v^2
+\frac{u_x^2+v_x^2}{\tau}\right].
\label{K3}
\end{align}
The term 
\begin{align*}
\mathcal{K}_4=
\int_{\mathcal{I}}\frac{\nu'(\theta)}{\tau}r^{-m}u \tau_t\theta_x
\end{align*}
can be treated by using \eqref{apriori0} as
\begin{align}\notag
\mathcal{K}_4&\lesssim 
\|u\|^{\frac{1}{2}}\|u_x\|^{\frac{1}{2}}\left\|\frac{\tau_t}{\sqrt{\tau}}\right\|
\left\|\frac{\theta_x}{\sqrt{\tau}}\right\|
\\  \notag &\lesssim 
\left\|\frac{\tau_t}{\sqrt{\tau}}\right\|^2+
(\gamma-1)^{\frac{1}{2}}N^4M^2
\left\|\frac{u_x}{\sqrt{\tau}}\right\|\left\|\frac{\theta_x}{\sqrt{\tau}}\right\|\\
&\lesssim \left[1+
(\gamma-1)^{\frac{1}{2}}N^4M^2\right]
\int_{\mathcal{I}}\frac{\tau_t^2+u_x^2+\theta_x^2}{\tau}.
\label{K4}
\end{align}
We have from \eqref{apriori1} and \eqref{apriori2} that
\begin{align}
\mathcal{K}_5:=\int_{\mathcal{I}} \frac{\nu'(\theta)\nu}{\tau^2} \theta_t\tau_x^2\lesssim 
N\lrn\theta_t\rrn\int_{\mathcal{I}}\frac{\tau_x^2}{\tau^3}
\lesssim \epsilon_1 \int_{\mathcal{I}}\frac{\tau_x^2}{\tau^3}.
\label{K5}
\end{align}
Using \eqref{apriori1}, we get
\begin{align}\notag
\mathcal{K}_6:=～&\int_{\mathcal{I}} 
\frac{\tau_x}{\tau}\theta_x \left(2m r^{-1}u\mu'(\theta)\nu 
-\nu \nu'(\theta)\tau^{-1}\tau_t\right)\\ 
\lesssim ～&
\epsilon\int_{\mathcal{I}}\frac{\tau_x^2}{\tau^3}
+C(\epsilon)(\gamma-1)^{\frac{1}{2}}N^2M^2\int_{\mathcal{I}}\left[
m\tau u^2+\frac{\tau_t^2}{\tau}\right].
\label{K6}
\end{align}
For the last term
\begin{align*}
\mathcal{K}_7:=-\int_{\mathcal{I}} \frac{\nu'(\theta)}{\tau} r^{-m}u \tau_x\theta_t ,
\end{align*}
we have from \eqref{apriori1} that
\begin{align}
\mathcal{K}_7
\lesssim \epsilon\int_{\mathcal{I}}\frac{\tau_x^2}{\tau^3}
+C(\epsilon)\lrn u\rrn^2 N\|\theta_t\|^2
\lesssim \epsilon\int_{\mathcal{I}}\frac{\tau_x^2}{\tau^3}
+C(\epsilon) N^3  \|\theta_t\|^2.
\label{K7}
\end{align}
Plugging \eqref{K1}--\eqref{K7} into \eqref{id_taux} and using \eqref{apriori2}, \eqref{E_theta} yield
\begin{align} \notag
&\frac{\mathrm{d}}{\mathrm{d}t}\int_{\mathcal{I}}
\left[\frac{1}{2}\left(\frac{\nu \tau_x}{\tau}\right)^2-
\frac{u}{r^{m}}\frac{\nu \tau_x}{\tau}\right]
+\int_{\mathcal{I}} \frac{\tau_x^2}{\tau^3}\\
&\quad\lesssim N^3  \|\theta_t\|^2+
\left[1+ \lrn \ln \tau\rrn\right]
\int_{\mathcal{I}}\left[m\tau u^2+v^2+
\frac{\tau_t^2+u_x^2+v_x^2+\theta_x^2}{\tau}\right]. \label{est2}
\end{align}
We integrate \eqref{est2} over $(0,t)$, apply Cauchy's inequality, and use \eqref{apriori1}, \eqref{apriori2}, \eqref{E_bas2} to conclude \eqref{E_taux}.
\end{proof}

Now we establish the uniform bounds for the specific volume, which are essential for the proof of the main theorem.
\begin{lemma} \label{lem_kanel}
If the conditions listed in Lemma \ref{lem_bas} hold for a sufficiently small $\epsilon_1$, then 
\begin{equation} \label{E_tau}
C_1^{-1}\leq \tau(t,x)\leq C_1\qquad{\rm for\ all}\ (t,x)\in[0,T]\times\mathcal{I}.
\end{equation}
\end{lemma}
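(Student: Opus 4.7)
The plan is to apply the Kanel$'$ technique: introduce an auxiliary function $\Psi(\tau)$ that blows up as $\tau\to 0^+$ or $\tau\to\infty$, show that its $L^\infty$-norm is already controlled by the estimates from Lemmas \ref{lem_bas} and \ref{lem_taux}, and invert this to obtain two-sided bounds on $\tau$.

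First, I would define
\[
\Psi(\tau):=\int_{\bar{\tau}}^{\tau}\frac{\sqrt{\phi(z/\bar{\tau})}}{z}\mathrm{d}z,
\]
with $\phi$ and $\bar{\tau}$ as in Lemma \ref{lem_bas} and \eqref{hat}. Using the leading behaviors $\phi(z)\sim z$ as $z\to\infty$ and $\phi(z)\sim -\ln z$ as $z\to 0^+$, a direct integration yields the coercivity
\[
\Psi(\tau)^2\gtrsim \tau\quad(\tau\to\infty),\qquad \Psi(\tau)^2\gtrsim |\ln\tau|^{3}\quad(\tau\to 0^+).
\]

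Next, integrating \eqref{cns.a} over $\mathcal{I}$ and using the boundary condition \eqref{bdy} together with \eqref{hat} gives the conservation identity $\int_{\mathcal{I}}\tau(t,x)\mathrm{d}x=\bar{\tau}$ for every $t\in[0,T]$. By the intermediate value theorem there exists $x_0(t)\in\bar{\mathcal{I}}$ with $\tau(t,x_0(t))=\bar{\tau}$, so $\Psi(\tau(t,x_0(t)))=0$. Writing $\Psi(\tau(t,x))$ as a line integral from $x_0(t)$ to $x$ and applying the Cauchy--Schwarz inequality,
\[
|\Psi(\tau(t,x))|\leq \int_{\mathcal{I}}\frac{\sqrt{\phi(\tau/\bar{\tau})}}{\tau}|\tau_x|\,\mathrm{d}x \leq \left\|\sqrt{\phi(\tau/\bar{\tau})}\right\|\left\|\frac{\tau_x}{\tau}\right\|.
\]
Combining \eqref{E_bas1} with \eqref{E_taux} then produces
\[
\sup_{(t,x)\in[0,T]\times\bar{\mathcal{I}}}|\Psi(\tau(t,x))|^{2}\lesssim 1+\lrn\ln\tau\rrn.
\]

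Setting $\tau^{*}:=\sup\tau$ and $\tau_{*}:=\inf\tau$ over $[0,T]\times\bar{\mathcal{I}}$ and evaluating the previous bound at the extremal points, the coercivity of $\Psi$ converts it into the coupled pair
\[
\tau^{*}\lesssim 1+\ln\tau^{*}+|\ln\tau_{*}|,\qquad |\ln\tau_{*}|^{3}\lesssim 1+\ln\tau^{*}+|\ln\tau_{*}|.
\]
The second inequality forces $|\ln\tau_{*}|\lesssim (1+\ln\tau^{*})^{1/3}$; feeding this into the first yields $\tau^{*}\lesssim 1+\ln\tau^{*}$, which forces $\tau^{*}$ to be bounded by a constant depending only on $\Pi_{0}$ and $V_{0}$, and hence $|\ln\tau_{*}|$ is bounded as well. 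This gives \eqref{E_tau}.

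The main subtlety will be the mismatch between the merely logarithmic right-hand side $1+\lrn\ln\tau\rrn$ and the polynomial/cubic-logarithmic lower bounds on $\Psi$ at the two extremes: this gap is what lets the coupled inequalities be disentangled, and is the reason the weight $\sqrt{\phi(\cdot/\bar\tau)}$ (rather than, e.g., $\phi(\cdot/\bar\tau)$ itself) is the right choice in the definition of $\Psi$.
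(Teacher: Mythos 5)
Your proposal is correct and follows essentially the same route as the paper: the auxiliary functional $\Psi$ is (after a change of variables) exactly the paper's $\Phi(\tau)=\int_1^{\tau/\bar\tau}\sqrt{\phi(z)}\,z^{-1}\mathrm{d}z$, and the argument proceeds identically via the conservation identity $\int_{\mathcal{I}}\tau\,\mathrm{d}x=\bar\tau$, the Cauchy--Schwarz bound $|\Psi|\leq\|\sqrt{\phi(\tau/\bar\tau)}\|\,\|\tau_x/\tau\|$, and Lemmas \ref{lem_bas}--\ref{lem_taux}. Your explicit disentangling of the coupled inequalities for $\tau^*$ and $|\ln\tau_*|$ is just a more spelled-out version of the paper's single inequality $\lrn\tau\rrn^{1/2}+\lrn\ln\tau\rrn^{3/2}\lesssim 1+\lrn\ln\tau\rrn^{1/2}$, exploiting the same exponent gap.
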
 
\begin{proof}
Define
$$\Phi(\tau):=\int_{1}^{\tau/\bar{\tau}}\frac{\sqrt{\phi(z)}}{z}\mathrm{d}z,$$
where $\bar{\tau}$ is defined by \eqref{hat}.
We infer that for suitably large constant $C$ and $\tau\geq C$,
\begin{equation*}
\Phi(\tau) \gtrsim \int_{C}^{\tau/\bar{\tau}}\frac{\sqrt{\phi(z)}}{z}\mathrm{d}z\gtrsim\int_C^{\tau/\bar{\tau}} z^{-\frac12}\mathrm{d}z.
\end{equation*}
Hence 
$$\tau^{\frac12}\lesssim 1+|\Phi(v)|\qquad \forall\ \tau\in(0,\infty). $$
Similarly, it follows that
$$|\ln \tau|^{\frac{3}{2}}\lesssim 1+|\Phi(\tau)|\qquad \forall\ 
\tau\in(0,\infty).$$
Thus, we have
\begin{equation}\label{kanel1}
\lrn \tau\rrn^{\frac12}+\lrn\ln \tau\rrn^{\frac{3}{2}}\lesssim
 1+\sup_{(t,x)\in[0,T]\times\mathcal{I}}|\Phi(\tau(t,x))|.
\end{equation}
By virtue of \eqref{cns.a} and \eqref{bdy}, we obtain
\begin{equation} \label{id_intau}
\int_{\mathcal{I}} \tau(t,x)\mathrm{d}x=\bar{\tau}\qquad \forall\ t\in[0,T],
\end{equation}
which implies that for each $t\in[0,T]$, there exists $y(t)\in \mathcal{I}$ such that $\tau(t,y(t))=\bar{\tau}$. Hence
\begin{align*}\label{kanel2}
&|\Phi(\tau)(t,x)|=
\left|\int_{y(t)}^x\frac{\partial}{\partial x}\Phi(\tau(t,y))\mathrm{d}y\right|\\
&\quad \leq  \int_{\mathcal{I}}\sqrt{\phi\left(\frac{\tau}{\bar{\tau}}(t,y)\right)}\left|\frac{\tau_x}{\tau}(t,y)\right|\mathrm{d}y
\leq\left\|\sqrt{\phi\left(\frac{\tau}{\bar{\tau}}\right)}(t)\right\|\left\|\frac{\tau_x}{\tau}(t)\right\|,
\end{align*}
which combined with Lemmas \ref{lem_bas}--\ref{lem_taux}  yields
\begin{equation}\label{kanel3}
|\Phi(\tau)(t,x)| \lesssim 1+\lrn\ln \tau \rrn^{\frac12}\qquad 
\forall\ (t,x)\in[0,T]\times\mathcal{I}.
\end{equation}
Combine \eqref{kanel1} with \eqref{kanel3} to deduce \eqref{E_tau}.
\end{proof}

We plug \eqref{E_tau} into \eqref{E_bas1}, \eqref{E_bas2}, and \eqref{E_taux} to obtain the following corollary.
\begin{corollary} \label{cor_1}
If the conditions listed in Lemma \ref{lem_bas} hold for a sufficiently small $\epsilon_1$, then 
\begin{align} 
\sup_{t\in[0,T]}\left\| \left(\tau,\tau_x,u,v,w,\sqrt{c_v}(\theta-1)\right)(t)\right\|^2
+\int_0^T\left\|\left(\tau_x,\tau_t,\sqrt{m} u, u_x,v, v_x, w_x,\theta_x \right)(t)\right\|^2\mathrm{d} t \leq C_2.
\label{E_1a}
\end{align} 
\end{corollary}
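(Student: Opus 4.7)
The plan is to simply combine the pointwise bounds from Lemma~\ref{lem_kanel} with the weighted estimates from Lemmas~\ref{lem_bas} and~\ref{lem_taux}, using the fact that the bound \eqref{E_tau} removes all of the $\tau$-weights that appear in the earlier lemmas.

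First I would use \eqref{E_tau} to note that $C_1^{-1}\leq \tau(t,x)\leq C_1$ uniformly on $[0,T]\times\mathcal{I}$, and hence $\lrn\ln\tau\rrn\lesssim 1$. Inserting this into the right-hand side of \eqref{E_taux} yields
\begin{equation*}
\sup_{t\in[0,T]}\left\|\frac{\tau_x}{\tau}(t)\right\|^2 + \int_0^T\!\!\int_{\mathcal{I}}\frac{\tau_x^2}{\tau^3}\,\mathrm{d}x\,\mathrm{d}t \lesssim 1.
\end{equation*}
Multiplying through by the uniform upper bound on $\tau$ (and on $\tau^3$), this gives $\sup_{t}\|\tau_x(t)\|^2+\int_0^T\|\tau_x(t)\|^2\,\mathrm{d}t\lesssim 1$. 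The trivial bound $\|\tau(t)\|^2 \leq C_1^2$ follows directly from \eqref{E_tau}.

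Next, from \eqref{E_bas1} and the fact that $\phi(\tau/\bar\tau)\gtrsim (\tau/\bar\tau - 1)^2$ in the bounded range given by \eqref{E_tau} (or alternatively using $\|\tau\|^2\lesssim 1$ directly), I obtain the pointwise-in-time bounds for $u$, $v$, $w$, and $\sqrt{c_v}(\theta-1)$. The integral estimates for $\sqrt{m}u$, $v$, $u_x$, $v_x$, $w_x$, $\theta_x$, and $\tau_t$ are read off \eqref{E_bas2}: since $\tau\geq C_1^{-1}$, each of the weighted integrands $u_x^2/\tau$, $v_x^2/\tau$, $w_x^2/\tau$, $\theta_x^2/\tau$, $\tau_t^2/\tau$ controls the corresponding unweighted $L^2$ norm up to a constant, and the terms $m\tau u^2$ and $\tau v^2$ similarly control $m\|u\|^2$ and $\|v\|^2$.

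There is no real obstacle here; the corollary is essentially a repackaging of the previous three lemmas once the two-sided bound on $\tau$ is available. The only thing worth being careful about is ensuring that all constants used remain independent of $\gamma-1$ and $T$, which is automatic since $\Pi_0$, $V_0$, and $C_1$ are $(\gamma-1)$-independent by the hypotheses of Theorem~\ref{thm} and the statement of Lemma~\ref{lem_kanel}. Assembling these bounds defines the constant $C_2$ and completes the proof.
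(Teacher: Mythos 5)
Your proposal is correct and coincides with the paper's own (one-line) argument: the paper likewise obtains Corollary \ref{cor_1} by plugging the two-sided bound \eqref{E_tau} into \eqref{E_bas1}, \eqref{E_bas2}, and \eqref{E_taux} to strip the $\tau$-weights and the $\lrn\ln\tau\rrn$ factor. Nothing further is needed.
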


\subsection{Estimates on first-order derivatives}
\label{subsec3}
This part is devoted to deducing the bounds for  $\|(u_x,v_x,w_x,\sqrt{c_v}\theta_x)(t)\|$ uniformly in time $t$.
We first make the estimate for $w_x$ in the next lemma.
\begin{lemma} \label{lem_wx}
If the conditions listed in Lemma \ref{lem_bas} hold for a sufficiently small $\epsilon_1$, then 
\begin{equation} \label{E_wx}
 \sup_{t\in[0,T]}\|w_x(t)\|^2 +\int_{0}^{T}\|w_{xx}(t)\|^2\mathrm{d}t\leq C_3.
\end{equation}
\end{lemma}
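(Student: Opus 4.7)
The key observation is that \eqref{cns.d} can be rewritten in divergence form as
$$w_t=\partial_x\!\left(\frac{\mu r^{2m}}{\tau}w_x\right),$$
which follows from $r_x=r^{-m}\tau$ by expanding $r^m(\mu r^m w_x/\tau)_x+m\mu r^{m-1}w_x$. The plan is to multiply this divergence form by $w_t$ and integrate over $\mathcal{I}$. Since $w|_{x=0,1}=0$ implies $w_t|_{x=0,1}=0$, the boundary terms arising from integration by parts vanish, yielding the clean energy identity
$$\frac{d}{dt}\int_{\mathcal{I}}\frac{\mu r^{2m}}{2\tau}w_x^2\,dx+\|w_t\|^2=\frac{1}{2}\int_{\mathcal{I}}\partial_t\!\left(\frac{\mu r^{2m}}{\tau}\right)w_x^2\,dx.$$

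Next I would expand $\partial_t(\mu r^{2m}/\tau)=\mu'(\theta)\theta_t r^{2m}/\tau+2m\mu r^{2m-1}u/\tau-\mu r^{2m}\tau_t/\tau^2$ and invoke $\tau_t=(r^m u)_x$ together with the pointwise bounds \eqref{E_theta}, \eqref{E_tau}, \eqref{E_r} to obtain $|\partial_t(\mu r^{2m}/\tau)|\lesssim|\theta_t|+|u|+|u_x|$. The resulting right-hand side is handled termwise: the $|\theta_t|$-contribution is bounded by $\lrn\theta_t\rrn\|w_x\|^2\lesssim\epsilon_1\|w_x\|^2$ via \eqref{apriori1}, while the $|u|$- and $|u_x|$-contributions are estimated through the Gagliardo--Nirenberg interpolation $\|w_x\|_{L^4}^2\lesssim\|w_x\|^{3/2}\|w_x\|_{H^1}^{1/2}$, producing terms of the form $\epsilon\|w_{xx}\|^2+B(t)\|w_x\|^2$ with $B(t)\lesssim \|u\|_{L^\infty}^2+\|u_x\|^2+\|u_x\|^{4/3}$. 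Crucially, Sobolev's embedding $\|u\|_{L^\infty}^2\lesssim\|u\|\|u_x\|$ combined with Corollary \ref{cor_1} guarantees that $\int_0^T B(t)\,dt\leq C$ uniformly in $T$.

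The spurious $\epsilon\|w_{xx}\|^2$ terms are absorbed by reinserting the PDE: since
$w_{xx}=(\tau/\mu r^{2m})\bigl[w_t-(\mu r^{2m}/\tau)_x w_x\bigr]$,
we obtain $\|w_{xx}\|^2\lesssim\|w_t\|^2+\|w_x\|^2$ after controlling $\int\tau_x^2w_x^2\,dx$ via $\lrn w_x\rrn^2\lesssim\|w_x\|(\|w_x\|+\|w_{xx}\|)$ and Young's inequality, using only the uniform bound $\|\tau_x\|\leq C_2$ from Corollary \ref{cor_1} together with the smallness of $\lrn\theta_x\rrn$ from \eqref{apriori1}. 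Collecting everything gives a differential inequality
$$\frac{d}{dt}\|w_x\|^2+c\|w_t\|^2\lesssim B(t)\|w_x\|^2,\qquad \int_0^T B(t)\,dt\leq C,$$
to which Gr\"onwall's inequality and the initial bound $\|w_x(0)\|\leq\Pi_0$ apply to yield $\sup_{[0,T]}\|w_x(t)\|^2+\int_0^T\|w_t(t)\|^2\,dt\leq C_3$. The bound on $\int_0^T\|w_{xx}\|^2\,dt$ then follows by reinserting the final estimates into the PDE.

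The main obstacle is ensuring that every cross term on the right-hand side assembles into a coefficient $B(t)$ with uniform-in-$T$ $L^1_t$ control; this hinges delicately on the pointwise smallness of $\theta_t$ from \eqref{apriori1}, the $L^2_t$-integrability of $(u,v,w,\theta)_x$ from the basic estimate \eqref{E_1a}, and the Sobolev embedding estimate for $\|u\|_{L^\infty}^2$ that converts the $L^2_t$-bound on $\|u_x\|$ into an $L^1_t$-bound on $\|u\|_{L^\infty}^2$. Without the smallness of $\gamma-1$ entering via $\|\theta_t\|_{L^\infty}$, the Gr\"onwall step would fail to provide a $T$-independent estimate.
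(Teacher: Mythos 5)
Your divergence-form identity is correct: since $(r^m)_x=m\tau/r$ by \eqref{r_eq}, one indeed has $r^m\bigl[\mu r^m w_x/\tau\bigr]_x+m\mu r^{m-1}w_x=\bigl[\mu r^{2m}w_x/\tau\bigr]_x$, and testing with $w_t$ (rather than with $w_{xx}$, as the paper does) is a legitimate alternative that produces the clean weighted identity you state. The trade-off is that your route moves the difficulty from the spatial coefficient $\bigl[\mu r^m/\tau\bigr]_x\lesssim|(1,\theta_x,\tau_x)|$ — whose factors already have uniform-in-time bounds from \eqref{apriori5} and Corollary \ref{cor_1}, so the paper's right-hand side collapses to $C\|w_x\|^2$ and \eqref{E_wx} follows by direct time integration using $\int_0^T\|w_x\|^2\,\mathrm{d}t\leq C_2$, with no Gr\"onwall at all — to the temporal coefficient $\partial_t(\mu r^{2m}/\tau)\lesssim|(\theta_t,u,u_x)|$, which is only $L^2$ in time. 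That is where your argument has a genuine gap.

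Concretely, your claim that $\int_0^TB(t)\,\mathrm{d}t\leq C$ uniformly in $T$ with $B(t)\lesssim\|u\|_{L^\infty}^2+\|u_x\|^2+\|u_x\|^{4/3}$ is false: Corollary \ref{cor_1} controls $\int_0^T\|u_x\|^2\,\mathrm{d}t$ but not $\int_0^T\|u_x\|^{4/3}\,\mathrm{d}t$ (take $\|u_x(t)\|^2\equiv C_2/T$ to see the latter grows like $T^{1/3}$), so the Gr\"onwall factor $\exp(\int_0^TB)$ is not $T$-independent and the conclusion does not follow as written. The repair is to apply Young's inequality with the split
\begin{equation*}
\|u_x\|\,\|w_x\|^{3/2}\|w_x\|_1^{1/2}
=\bigl(\|u_x\|\,\|w_x\|\bigr)\bigl(\|w_x\|^{1/2}\|w_x\|_1^{1/2}\bigr)
\leq \tfrac12\|u_x\|^2\|w_x\|^2+\epsilon\|w_{xx}\|^2+C(\epsilon)\|w_x\|^2,
\end{equation*}
which yields, after absorbing $\epsilon\|w_{xx}\|^2$ through $\|w_{xx}\|^2\lesssim\|w_t\|^2+\|w_x\|^2$, the inequality $\frac{\mathrm{d}}{\mathrm{d}t}\|w_x\|^2+c\|w_t\|^2\lesssim \|u_x\|^2\|w_x\|^2+\|w_x\|^2$. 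You must then treat the two right-hand terms differently: $\|u_x\|^2$ is the Gr\"onwall coefficient (with $\int_0^T\|u_x\|^2\leq C_2$), while the final $\|w_x\|^2$ must be treated as a \emph{forcing} term whose $L^1_t$-norm is bounded by $C_2$ from Corollary \ref{cor_1} — not folded into the coefficient, since $\int_0^T1\,\mathrm{d}t=T$. (The same remark applies to your $|u|$-contribution, where $\|u\|_{L^\infty}\leq\frac12(1+\|u\|_{L^\infty}^2)$ silently introduces a constant coefficient.) With this bookkeeping Gr\"onwall gives $\|w_x(t)\|^2\leq\mathrm{e}^{CC_2}(\|w_x(0)\|^2+CC_2)$ and your proof closes; without it, the estimate is not uniform in $T$.
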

\begin{proof}
Multiply \eqref{cns.d} by $w_{xx}$, integrate the resulting identity over $\mathcal{I}$, and use $$w_t(t,0)=w_t(t,1)=0$$ to find
\begin{align}
\frac{\mathrm{d}}{\mathrm{d}t}\|w_x(t)\|^2
+2\int_{\mathcal{I}} \frac{\mu r^{2m}w_{xx}^2}{\tau}
=-2\int_{\mathcal{I}}w_{xx}r^mw_{x}\left[\frac{\mu r^m}{\tau}\right]_x
-2m\int_{\mathcal{I}}w_{xx}\mu  r^{m-1}w_x.
\notag  
\end{align}
In view of \eqref{r_eq}, \eqref{E_theta} and \eqref{E_tau},
we have
\begin{align*}
\left|\left[\frac{\mu r^m}{\tau}\right]_x\right|\lesssim 
\left|\left(1,\theta_x,\tau_x\right)\right|.
\end{align*}
By virtue of \eqref{apriori0}, \eqref{apriori1} and \eqref{apriori2},  we have
\begin{align}\label{apriori5}
\sup_{t\in[0,T]}\|(\theta-1,\theta_t)(t)\|_1+\lrn(\theta_x,\theta_t)\rrn+
\int_{0}^{T}\|\theta_{t}(t)\|_1^2\mathrm{d}t\lesssim 1.
\end{align}
Then using Cauchy's and Sobolev's inequalities yields
\begin{align}\notag
&\frac{\mathrm{d}}{\mathrm{d}t}\|w_x(t)\|^2
+\| w_{xx}(t)\|^2\lesssim \int_{\mathcal{I}} w_x^2(1+\theta_x^2+\tau_x^2)\\ \notag
&\quad\lesssim \left[1+\lrn\theta_x\rrn^2\right]\|w_x(t)\|^2
+\sup_{s\in[0,T]}\|\tau_x(s)\|^2\|w_x(t)\|_{L^{\infty}}^2\\ \notag
&\quad\lesssim 
\left[1+C(\epsilon)\sup_{s\in[0,T]}\|\tau_x(s)\|^4\right]\|w_x(t)\|^2+\epsilon \|w_{xx}(t)\|^2,
\end{align}
which combined with \eqref{E_1a} implies
\begin{align} 
\frac{\mathrm{d}}{\mathrm{d}t}\|w_x(t)\|^2+\| w_{xx}(t)\|^2\lesssim \|w_x(t)\|^2.\label{est3}
\end{align}
We conclude \eqref{E_wx} by integrating \eqref{est3} over $(0,t)$. 
\end{proof}

In the following lemma we obtain the bounds for $\|u_x(t)\|$ and $\|v_x(t)\|$ uniformly in time $t$. 
\begin{lemma}
If the conditions listed in Lemma \ref{lem_bas} hold for a sufficiently small $\epsilon_1$, then 
\begin{equation} \label{E_uxvx}
\sup_{t\in[0,T]}\|(u_x,v_x)(t)\|^2 +\int_{0}^{T}\|(u_{xx},v_{xx})(t)\|^2\mathrm{d}t\leq C_4.
\end{equation}
\end{lemma}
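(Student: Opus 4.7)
The plan is to mimic the strategy used for $w_x$ in Lemma \ref{lem_wx}: multiply \eqref{cns.b} by $u_{xx}$ and \eqref{cns.c} by $v_{xx}$, integrate over $\mathcal{I}$, and exploit the fact that the boundary conditions $u|_{x=0,1}=v|_{x=0,1}=0$ force $u_t=v_t=0$ on $\partial\mathcal{I}$. In this way $\int_{\mathcal{I}} u_t u_{xx}\mathrm{d}x=-\tfrac{1}{2}\tfrac{\mathrm{d}}{\mathrm{d}t}\|u_x\|^2$ and the same for $v$, producing the time-derivative of the desired quantity on the left. Expanding $r^m\bigl[\tfrac{\nu(r^m u)_x}{\tau}\bigr]_x$ using $(r^m u)_x=r^m u_x+\tfrac{m\tau u}{r}$, the highest-order piece is $\tfrac{\nu r^{2m}}{\tau}u_{xx}$; multiplying by $u_{xx}$ yields the coercive principal term $\int_{\mathcal{I}}\tfrac{\nu r^{2m}}{\tau}u_{xx}^2\mathrm{d}x\gtrsim\|u_{xx}\|^2$ by \eqref{transport}, \eqref{E_theta}, \eqref{E_r}, \eqref{E_tau}. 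The analogous step for \eqref{cns.c} gives $\|v_{xx}\|^2$.

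The remainder of the right-hand side consists of commutator-type lower-order terms (arising from $\nu_x$, $\tau_x$, and the $m$-dependent geometric corrections), the pressure term $r^m P_x=r^m\bigl(\tfrac{\theta_x}{\tau}-\tfrac{\theta\tau_x}{\tau^2}\bigr)$, the centrifugal term $-v^2/r$, and the Coriolis-type term $uv/r$ in the $v$-equation, plus $\mu\tau v/r^{2m}$ and $\mu v_x$. Each of these is handled by Cauchy's inequality, absorbing $\epsilon\|u_{xx}\|^2$ and $\epsilon\|v_{xx}\|^2$ into the coercive principal terms. The factors of $\theta_x$ and $\theta_t$ are uniformly bounded by \eqref{apriori5}, and the factors of $\tau$, $\tau_x$, $u$, $v$, $w$ have the $L^\infty_t L^2_x$ control supplied by Corollary \ref{cor_1}. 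For the $L^\infty_x$ bounds on $u_x,v_x,\tau_x$ that unavoidably show up in products such as $\int \tau_x^2 u_x^2$, one does \emph{not} differentiate $\tau$ further (we have no $\|\tau_{xx}\|$ bound yet); instead, apply $\|f\|_{L^\infty}\lesssim\|f\|^{1/2}\|f_x\|^{1/2}$ to $u_x$ (or $v_x$) so the resulting $\|u_{xx}\|$ factors get absorbed after Cauchy.

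Combining these estimates yields a differential inequality of the form
\begin{equation*}
\frac{\mathrm{d}}{\mathrm{d}t}\|(u_x,v_x)(t)\|^2+\|(u_{xx},v_{xx})(t)\|^2
\lesssim g(t)\,\|(u_x,v_x)(t)\|^2+f(t),
\end{equation*}
where $g(t)$ is uniformly bounded in $t$ and $f(t)$ is integrable on $[0,T]$, both thanks to \eqref{E_1a} and \eqref{apriori5}. Integrating over $(0,t)$ and invoking the smallness of the initial data through $\|(u_{0x},v_{0x})\|^2\lesssim\Pi_0^2$ (guaranteed by \eqref{H1}) then delivers \eqref{E_uxvx} via Gronwall's lemma.

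The main obstacle is bookkeeping: the $v$-equation contains several geometric terms ($uv/r$, $2\mu v_x$, $m(\mu r^{m-1}v)_x$, $\mu\tau v/r^{2m}$) that each generate several subterms upon differentiation and multiplication by $v_{xx}$, and one must track carefully that every such product either contains a factor whose time integral is controlled by \eqref{E_1a}, or can be absorbed into the coercive $\|v_{xx}\|^2$. The cross term $-v^2/r$ in the $u$-equation and the pressure contribution $r^m \theta \tau_x/\tau^2$ are the most delicate, since the latter couples to $\tau_x$ for which only $L^\infty_t L^2_x$ is available so far; the interpolation trick above is essential to avoid requiring $\|\tau_{xx}\|$ at this stage.
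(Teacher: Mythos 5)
Your strategy is the same as the paper's: multiply \eqref{cns.b} and \eqref{cns.c} by $u_{xx}$ and $v_{xx}$, use $u_t=v_t=0$ on $\partial\mathcal{I}$ to produce $\tfrac{\mathrm{d}}{\mathrm{d}t}\|(u_x,v_x)\|^2$, extract the coercive term $\int\nu r^{2m}\tau^{-1}(u_{xx}^2+v_{xx}^2)$, and control the remaining products with Cauchy's inequality plus the interpolation $\|f\|_{L^\infty}^2\lesssim\|f\|\,\|f\|_1$ applied to $u_x$, $v_x$, $mu$ rather than to $\tau_x$ (exactly the paper's treatment of the terms $\mathcal{K}_{10}$ and $\mathcal{K}_{12}$). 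Up to the final step the argument is sound.

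The one place where your write-up, as stated, would not deliver \eqref{E_uxvx} is the closing Gronwall argument. You claim a differential inequality $\tfrac{\mathrm{d}}{\mathrm{d}t}\|(u_x,v_x)\|^2+\|(u_{xx},v_{xx})\|^2\lesssim g(t)\|(u_x,v_x)\|^2+f(t)$ with $g$ merely \emph{uniformly bounded} in $t$; Gronwall then gives $\|(u_x,v_x)(t)\|^2\lesssim \mathrm{e}^{Ct}$, i.e.\ a constant depending on $T$, whereas $C_4$ must be independent of $T$ for the continuation argument in Section \ref{sec3} to close. What saves the argument is that the only genuinely superlinear contribution comes from the cross terms $-v^2/r$ and $uv/r$, whose estimate produces $\|(u,v)\|_{L^\infty}^2\|v\|^2\lesssim(1+\|(u_x,v_x)\|^2)\|v\|^2$, so the correct statement is $\int_0^Tg(t)\,\mathrm{d}t\lesssim1$ (with $g(t)\sim\|v(t)\|^2$), which \eqref{E_1a} provides; Gronwall with an integrable coefficient then yields a $T$-independent bound. (The paper avoids Gronwall altogether: it keeps this term as $\|(u_x,v_x)\|\,\|v\|^2$, integrates in time to get $\sup_{[0,t]}\|(u_x,v_x)\|^2\lesssim1+\sup_{[0,t]}\|(u_x,v_x)\|$, and closes by Young's inequality.) Also note that \eqref{H1} gives boundedness, not smallness, of $\|(u_{0x},v_{0x})\|$; boundedness is all that is needed here.
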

\begin{proof}
 Multiply \eqref{cns.b} and \eqref{cns.c} by $u_{xx}$ and $v_{xx}$, respectively, and then add the resulting identities to get
 \begin{align*}
    &\left(u_x^2+v_x^2\right)_t-2\left(u_xu_t+v_xv_t\right)_x
    +\frac{2r^{2m}}{\tau}\left(\nu u_{xx}^2+\mu v_{xx}^2\right)
    -2u_{xx}r^m P_x\\
   ~& =2\left[-u_{xx}\frac{v^2}{r}+v_{xx}\frac{uv}{r}\right]
    -2u_{xx} r^m\left[\left(\frac{\nu (r^m u)_x}{\tau}\right)_x
    -\frac{\nu r^m u_{xx}}{\tau}\right]+4m r^{m-1}u \mu_x u_{xx}\\
    &\quad -2v_{xx} \left[r^m v_x \left(\frac{\mu r^m}{\tau}\right)_x+2\mu r^{m-1} v_x\
    -m(\mu r^{m-1} v)_x-\frac{\mu\tau v }{r^{2m}}\right].
 \end{align*}
 Integrating the last identity and using the boundary conditions
 $$u_t|_{\partial \mathcal{I}}=v_t|_{\partial \mathcal{I}}=0,$$
 we have
 \begin{align} \label{id_ux}
 \frac{\mathrm{d}}{\mathrm{d}t}\|(u_x,v_x)(t)\|^2
 +2\int_{\mathcal{I}}\frac{r^{2m}}{\tau}\left(\nu u_{xx}^2+\mu v_{xx}^2\right)=\sum_{q=8}^{12} \mathcal{K}_q,
 \end{align}
 where each term $\mathcal{K}_q$ in the decomposition will be defined and estimated below.
 First, we get
 \begin{align}
 \mathcal{K}_8:=2\int_{\mathcal{I}} u_{xx}r^m P_x
 \leq \epsilon  \|u_{xx}(t)\|^2
 +C(\epsilon) \|(\theta_x,\tau_x)(t)\|^2. \label{K8}
 \end{align}
 For the term
 \begin{equation*}
 \mathcal{K}_{9}:=
 2\int_{\mathcal{I}} \left[-u_{xx}\frac{v^2}{r}+v_{xx}\frac{uv}{r}\right],
 \end{equation*}
 we have from Sobolev's inequality and \eqref{E_1a} that
 \begin{align} \notag
 \mathcal{K}_{9}&\leq \epsilon \|(u_{xx},v_{xx})(t)\|^2
 +C(\epsilon)|(u,v)(t)\|_{L^{\infty}}^2\|v(t)\|^2\\ 
 &\leq \epsilon \|(u_{xx},v_{xx})(t)\|^2
 +C(\epsilon)\|(u_x,v_x)(t)\|\|v(t)\|^2.
 \label{K9}
 \end{align} 
 To estimate the term
 \begin{equation*}
 \mathcal{K}_{10}:=-2\int_{\mathcal{I}}
 u_{xx} r^m\left[\left(\frac{\nu (r^m u)_x}{\tau}\right)_x-
 \frac{\nu r^m u_{xx}}{\tau}\right],
 \end{equation*}
we first compute from \eqref{r_eq}, \eqref{E_theta} and \eqref{E_tau} that
\begin{align} \label{est_3.2}
\left|\left(\frac{\nu (r^m u)_x}{\tau}\right)_x-\frac{\nu r^m u_{xx}}{\tau}\right| 
\lesssim|(\tau_x,\theta_x)||(u_x,mu)|+|(u_x,mu\tau_x,mu)|.
\end{align}
It follows from \eqref{est_3.2}, \eqref{apriori5}, and \eqref{E_1a} that 
\begin{align} \notag
\mathcal{K}_{10}
&\leq \epsilon \|u_{xx}(t)\|^2+C(\epsilon)
\int_{\mathcal{I}}\left[\tau_x^2 u_x^2+(1+\theta_x^2)|(u_x,mu\tau_x,mu)|^2\right]\\ 
&\leq \epsilon \|u_{xx}(t)\|^2+C(\epsilon)
\left[1+\lrn \theta_x\rrn^2\right]\left[
\|(u_x,mu)(t)\|^2+\|\tau_x(t)\|^2\|(u_x,mu)(t)\|_{L^{\infty}}^2\right]\notag \\[1mm]
&\leq \epsilon \|u_{xx}(t)\|^2+C(\epsilon)
\left[\|(u_x,mu)(t)\|^2+\|(u_x,mu)(t)\|\|(u_{xx},u_x)(t)\|\right] \notag \\[1mm]
&\leq 2\epsilon \|u_{xx}(t)\|^2+C(\epsilon)\|(u_x,mu)(t)\|^2.
\label{K10}
\end{align}
  The term
  \begin{align*}
  \mathcal{K}_{11}
  := 4m\int_{\mathcal{I}}  r^{m-1}u \mu_x u_{xx}
  \end{align*}
  can be easily estimated as
  \begin{align}
  \mathcal{K}_{11}
  \leq \epsilon  \|u_{xx}(t)\|^2+C(\epsilon)\lrn\theta_x\rrn^2 \|mu(t)\|^2
  \leq \epsilon  \|u_{xx}\|^2+C(\epsilon)\|mu(t)\|^2. \label{K11}
  \end{align}
  The last term is 
  \begin{align*}
  \mathcal{K}_{12}:=
  -2\int_{\mathcal{I}}
   v_{xx} \left[r^m v_x \left(\frac{\mu r^m}{\tau}\right)_x+2\mu r^{m-1} v_x\
   -m(\mu r^{m-1} v)_x-\frac{\mu\tau v }{r^{2m}}\right].
  \end{align*}
  Similar to the derivation of \eqref{K10}, we can get
  \begin{align} \notag
  \mathcal{K}_{12}&\leq \epsilon  \|v_{xx}(t)\|^2
  +C(\epsilon)\int_{\mathcal{I}}\left[
  v_x^2(1+\theta_x^2+\tau_x^2)+v^2\right]\\
  &\leq 2\epsilon \|v_{xx}(t)\|^2+C(\epsilon)\|(v_x,v)(t)\|^2  .
  \label{K12}
  \end{align}
  Plugging \eqref{K8}--\eqref{K12} into 
  \eqref{id_ux} and taking $\epsilon$ sufficiently small yield
\begin{align}
\frac{\mathrm{d}}{\mathrm{d}t}\|(u_x,v_x)(t)\|^2+\left\|( u_{xx},v_{xx})(t)\right\|^2
\lesssim \|(\theta_x,\tau_x,u_x,mu,v_x,v)(t)\|^2+\|(u_x,v_x)(t)\|\|v(t)\|^2.
\label{est4}
\end{align}  
Integrating \eqref{est4} over $(0,t)$, we deduce from \eqref{E_1a} that
\begin{align*}
\|(u_x,v_x)(t)\|^2+\int_{0}^{t}\left\|( u_{xx},v_{xx})(s)\right\|^2\mathrm{d}s
\lesssim 1+\sup_{s\in[0,t]}\|(u_x,v_x)(s)\|,
\end{align*} 
from which we can conclude \eqref{E_uxvx}.
\end{proof}

The following lemma is the estimate on $\|\sqrt{c_v}\theta_x(t)\|$.
\begin{lemma}\label{lem_thex}
If the conditions listed in Lemma \ref{lem_bas} hold for a sufficiently small $\epsilon_1$, then 
\begin{equation} \label{E_thex}
\sup_{t\in[0,T]}\left\|\sqrt{c_v}\theta_x(t)\right\|^2 +\int_{0}^{T}
\|\theta_{xx}(t)\|^2\mathrm{d}t\leq C_5.
\end{equation}
\end{lemma}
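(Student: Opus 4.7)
The plan is to obtain the desired bound by multiplying the internal-energy equation \eqref{cns.e} by $-\theta_{xx}$ and integrating over $\mathcal{I}$. Rewriting \eqref{cns.e} as
\[c_v\theta_t-\left[\frac{\kappa r^{2m}\theta_x}{\tau}\right]_x=-P(r^m u)_x+\mathcal{Q},\]
testing with $-\theta_{xx}$, integrating by parts in the $c_v\theta_t\theta_{xx}$ term with the help of the Neumann condition $\theta_x|_{\partial\mathcal{I}}=0$, and splitting the diffusion as $[A\theta_x]_x\theta_{xx}=A\theta_{xx}^2+A_x\theta_x\theta_{xx}$ with $A:=\kappa r^{2m}/\tau$, should yield
\[\frac{c_v}{2}\frac{\mathrm{d}}{\mathrm{d}t}\|\theta_x(t)\|^2+\int_{\mathcal{I}}\frac{\kappa r^{2m}}{\tau}\theta_{xx}^2\mathrm{d}x=\int_{\mathcal{I}}\bigl(-A_x\theta_x+P(r^m u)_x-\mathcal{Q}\bigr)\theta_{xx}\mathrm{d}x.\]
By \eqref{E_theta}, \eqref{E_tau} and \eqref{E_r}, the coefficient $\kappa r^{2m}/\tau$ is bounded below, so the left-hand side is genuinely coercive in $\|\theta_{xx}\|^2$.

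Next I would absorb the three forcing terms into the dissipation plus time-integrable remainders. The chain rule together with \eqref{r_eq}, \eqref{E_theta} and \eqref{E_tau} gives $|A_x|\lesssim 1+|\theta_x|+|\tau_x|$; combined with $\lrn\theta_x\rrn\lesssim 1$ from \eqref{apriori5}, this controls the first term by $\epsilon\|\theta_{xx}\|^2+C(\epsilon)\|(\theta_x,\tau_x)\|^2$. The pressure term is treated analogously, since $|P(r^m u)_x|\lesssim |u_x|+m|u|$ because $P=\theta/\tau$ is pointwise bounded. The delicate piece is $\int\mathcal{Q}\theta_{xx}$: from \eqref{Q} one has $|\mathcal{Q}|\lesssim u_x^2+v_x^2+w_x^2+u^2+v^2+|uu_x|$, so the Gagliardo--Nirenberg interpolation $\|u_x\|_{L^4}^2\lesssim\|u_x\|\|u_{xx}\|+\|u_x\|^2$ (and its analogues for $v_x,w_x$) provides
\[\|\mathcal{Q}\|^2\lesssim\bigl(1+\|(u_{xx},v_{xx},w_{xx})\|^2\bigr)\bigl(1+\|(u_x,v_x,w_x)\|^2\bigr),\]
whose time integral is uniformly bounded by Corollary \ref{cor_1} together with \eqref{E_wx} and \eqref{E_uxvx}.

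Finally, choosing $\epsilon$ small enough to absorb all the $\|\theta_{xx}\|^2$ contributions on the left and integrating over $(0,t)$ would produce
\[c_v\|\theta_x(t)\|^2+\int_0^t\|\theta_{xx}(s)\|^2\mathrm{d}s\lesssim c_v\|\theta_{0x}\|^2+1,\]
and the initial contribution is controlled by $\Pi_0^2$ thanks to $\|\sqrt{c_v}(\theta_0-1)\|_{H^1(\mathcal{I})}\leq\Pi_0$ from \eqref{H1}, which gives \eqref{E_thex}. The main obstacle in this plan is the estimate of $\int\mathcal{Q}\theta_{xx}$: the quadratic gradient terms must be split so that one copy of a second-order norm stays time-integrable while the other factor is a uniformly bounded first-order norm, and this is precisely where the $L^2(0,T;L^2(\mathcal{I}))$ control of $(u_{xx},v_{xx},w_{xx})$ established in the preceding two lemmas becomes indispensable.
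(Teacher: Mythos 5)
Your proposal is correct and follows essentially the same route as the paper: test \eqref{cns.e} with $\theta_{xx}$, use the Neumann condition to produce $\tfrac{\mathrm{d}}{\mathrm{d}t}\|\sqrt{c_v}\theta_x\|^2$ plus the coercive dissipation $\int\kappa r^{2m}\theta_{xx}^2/\tau$, and absorb the pressure, commutator, and $\mathcal{Q}$ terms via Cauchy's inequality and Sobolev/Gagliardo--Nirenberg interpolation, relying on the previously obtained $L^2_tL^2_x$ control of $(u_{xx},v_{xx},w_{xx})$ (the paper's $\mathcal{K}_{13}$--$\mathcal{K}_{15}$ decomposition). Your identification of $\int\mathcal{Q}\,\theta_{xx}$ as the delicate term and the way you split it match the paper's argument.
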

\begin{proof}
 Multiply \eqref{cns.e} by $\theta_{xx}$, use the boundary condition for $\theta$, and integrate the resulting identity to get
 \begin{align}\label{id_thex}
 \frac{\mathrm{d}}{\mathrm{d}t}\|\sqrt{c_v}\theta_x(t)\|^2
 +2\int_{\mathcal{I}}\frac{\kappa r^{2m} \theta_{xx}^2}{\tau}
 =\sum_{q=13}^{15}\mathcal{K}_q,
 \end{align}
 where the term $\mathcal{K}_q$ will be given below.
 For the term
 \begin{align*}
 \mathcal{K}_{13}:=\int_{\mathcal{I}}
 2\theta_{xx}\left[\frac{\theta(r^m u)_x}{\tau}
 -\frac{\nu(r^m u)_x^2}{\tau}+2m\mu(r^{m-1} u^2)_x\right],
 \end{align*}
  it follows from \eqref{r_eq}, \eqref{E_theta}, \eqref{E_tau}, \eqref{E_1a} and \eqref{E_uxvx} that
\begin{align}\notag
\mathcal{K}_{13}&\leq 
\epsilon  \|\theta_{xx}(t)\|^2
+C(\epsilon)\int_{\mathcal{I}}|(u_x,mu,1)|^2|(u_x,mu)|^2\\ \notag
& \leq  \epsilon  \|\theta_{xx}(t)\|^2
+C(\epsilon)\|(u_x,mu,1)(t)\|_{L^{\infty}}^2\|(u_x,mu)\|^2\\[1mm]
& \leq  \epsilon \|\theta_{xx}(t)\|^2
+C(\epsilon)\|(u_x,mu,1)(t)\|^2\|(u_x,mu,u_{xx})(t)\|^2 \notag \\[1mm]
& \leq\epsilon \|\theta_{xx}(t)\|^2+C(\epsilon)\|(u_x,mu,u_{xx})(t)\|^2.
\label{K13}
\end{align}
Defining
\begin{align*}
\mathcal{K}_{14}:=\int_{\mathcal{I}}2\theta_{xx}\left[
-\frac{\mu r^{2m} w_x^2}{\tau} -\mu \tau\left(\frac{r^m v_x}{\tau}-\frac{v}{r^m}\right)^2\right],
\end{align*}
 we have from \eqref{E_1a}, \eqref{E_wx} and \eqref{E_uxvx} that
\begin{align}\notag
\mathcal{K}_{14}&\leq
\epsilon \|\theta_{xx}\|^2
+C(\epsilon)\int_{\mathcal{I}}|(w_x,v_x,v)|^4\\ \notag
& \leq  \epsilon \|\theta_{xx}\|^2
+C(\epsilon)\|(w_x,v_x,v)(t)\|_{L^{\infty}}^2
\|(w_x,v_x,v)(t)\|^2\\[1mm]
&\leq \epsilon \|\theta_{xx}\|^2+C(\epsilon)\|(w_{xx},v_{xx},w_x,v_x,v)(t)\|^2
\label{K14}
\end{align}
The term
\begin{align*}
\mathcal{K}_{15}:=\int_{\mathcal{I}}2 \theta_{xx}\left[\frac{\kappa r^{2m} \theta_{xx}}{\tau}-\left(\frac{\kappa r^{2m} \theta_x}{\tau}\right)_x\right]
\end{align*}
can be bounded by using \eqref{apriori5} as
\begin{align}\notag
\mathcal{K}_{15} &\leq 
\epsilon \|\theta_{xx}(t)\|^2
+C(\epsilon)\int_{\mathcal{I}} \theta_x^2\left[1+\theta_x^2+\tau_x^2\right]\\ \notag
&\leq 
\epsilon \|\theta_{xx}(t)\|^2+C(\epsilon)\left[1+\lrn \theta_x\rrn^2\right] \|(\theta_x,\tau_x)(t)\|^2\\[1mm]
&\leq 
\epsilon \|\theta_{xx}(t)\|^2+C(\epsilon)\|(\theta_x,\tau_x)(t)\|^2.
\label{K15}
\end{align}
We insert \eqref{K13}--\eqref{K15} into \eqref{id_thex} to obtain
\begin{align}
\frac{\mathrm{d}}{\mathrm{d}t}\|\sqrt{c_v}\theta_x(t)\|^2+\|\theta_{xx}(t)\|^2\lesssim 
\|(mu,v,\tau_x,u_x,v_x,w_x,\theta_x,u_{xx},v_{xx},w_{xx})(t)\|^2,
\label{est5}
\end{align}
which combined with \eqref{H1}, \eqref{E_1a}, \eqref{E_wx} and \eqref{E_uxvx} implies \eqref{E_thex}.
\end{proof}

Using the system \eqref{cns}, we can deduce the next lemma from Corollary \ref{cor_1}, Lemmas \ref{lem_wx}--\ref{lem_thex}. 
\begin{lemma}
If the conditions listed in Lemma \ref{lem_bas} hold for a sufficiently small $\epsilon_1$, then 
 \begin{equation} \label{E_1b}
 \sup_{t\in[0,T]}\left\|\tau_t(t)\right\|^2 +\int_{0}^{T}
 \|(\tau_{xt},
 u_{t},v_{t},w_{t},c_v\theta_t)(t)\|^2\mathrm{d}t\lesssim 1.
 \end{equation}
\end{lemma}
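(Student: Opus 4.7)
The plan is to obtain each estimate by reading off the corresponding quantity directly from the system \eqref{cns}, thereby expressing the time derivative pointwise in terms of spatial derivatives whose $L^\infty_t L^2_x$ or $L^2_t L^2_x$ norms have already been controlled in Corollary \ref{cor_1} and Lemmas \ref{lem_wx}--\ref{lem_thex}. No further integration by parts is needed.

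First, from \eqref{cns.a} and $r_x=r^{-m}\tau$ one has $\tau_t=r^m u_x+mr^{-1}\tau u$. Since $r$ and $\tau^{\pm 1}$ are uniformly bounded by \eqref{E_r} and Lemma \ref{lem_kanel}, this yields $\|\tau_t(t)\|^2\lesssim \|u_x(t)\|^2+\|mu(t)\|^2\lesssim 1$ for every $t\in[0,T]$ by \eqref{E_1a} and \eqref{E_uxvx}. Differentiating the identity for $\tau_t$ once more in $x$ produces terms of the form $u_{xx}$, $\tau_x u_x$, $\tau_x\cdot mu$, $u_x$ and $mu$ (with coefficients depending only on $r$ and $\tau$), so that $\|\tau_{xt}\|^2$ is controlled pointwise in $t$ by $\|u_{xx}\|^2$ plus products in which Sobolev's inequality $\|u_x\|_{L^\infty}\lesssim \|u_x\|_1$ combined with the already established $\sup_t\|\tau_x(t)\|\lesssim 1$ reduce matters to $\|u_{xx}\|^2+\|u_x\|^2+\|mu\|^2$. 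Integrating in time and invoking \eqref{E_uxvx} and \eqref{E_1a} gives $\int_0^T\|\tau_{xt}\|^2\,dt\lesssim 1$.

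For the velocity components, equations \eqref{cns.b}--\eqref{cns.d} immediately produce $u_t$, $v_t$, $w_t$ in terms of spatial derivatives. Expanding the second-order viscous terms exactly as in \eqref{est_3.2} shows that $|u_t|$, $|v_t|$, $|w_t|$ are each majorized pointwise by the corresponding second derivative $|u_{xx}|$, $|v_{xx}|$, $|w_{xx}|$ plus quadratic expressions in $(u_x,mu,v_x,v,w_x,\tau_x,\theta_x)$. Since $\lrn\theta_x\rrn$ and $\sup_t\|\tau_x(t)\|$ are both $\lesssim 1$ by \eqref{apriori5} and \eqref{E_1a}, Sobolev's inequality converts every nonlinear product into a bounded coefficient times one of the norms already controlled. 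Integrating in time and using Lemmas \ref{lem_wx}--\ref{lem_thex} and \eqref{E_1a} delivers $\int_0^T\|(u_t,v_t,w_t)\|^2\,dt\lesssim 1$.

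Finally, the critical point is the $c_v\theta_t$ bound. Rather than appealing to the a priori bound \eqref{apriori1} — which only gives $\int_0^T\|\theta_t\|_1^2\,dt\lesssim(\gamma-1)N^2$ and would leave a stray factor of $c_v$ — one solves \eqref{cns.e} algebraically for $c_v\theta_t=e_t$:
\begin{equation*}
c_v\theta_t=-P(r^m u)_x+\frac{\kappa r^{2m}\theta_{xx}}{\tau}+\left[\frac{\kappa r^{2m}}{\tau}\right]_x\theta_x+\mathcal{Q}.
\end{equation*}
Since the coefficient of $\theta_{xx}$ is uniformly bounded, $\|c_v\theta_t\|^2$ is dominated by $\|\theta_{xx}\|^2$ together with quadratic terms in first-order derivatives coming from $P(r^m u)_x$ and $\mathcal{Q}$, plus the cross term $\|\tau_x\theta_x\|^2+\|\theta_x\|^2\lesssim \|\tau_x\|^2\lrn\theta_x\rrn^2+\|\theta_x\|^2$. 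Each resulting integrand is $(\gamma-1)$-independent and its $L^2_t$ norm is controlled by \eqref{E_1a}, \eqref{E_wx}, \eqref{E_uxvx}, \eqref{E_thex}. The only mild subtlety — and the closest thing to an obstacle — is precisely this circumvention of the $c_v$-weight on $\theta_t$, which is resolved by exploiting the fact that $c_v\theta_t$ equals a spatial expression whose norm is intrinsically $(\gamma-1)$-independent.
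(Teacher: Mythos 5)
Your proposal is correct and follows essentially the same route as the paper: read $\tau_t$, $\tau_{xt}$ off \eqref{cns.a}, read $c_v\theta_t$ algebraically off \eqref{cns.e} (the paper's estimate \eqref{est2a}), bound the resulting spatial expressions by the norms already controlled in Corollary \ref{cor_1} and Lemmas \ref{lem_wx}--\ref{lem_thex} via Sobolev's inequality, and treat $u_t,v_t,w_t$ analogously from \eqref{cns.b}--\eqref{cns.d}. Your explicit observation that solving for $c_v\theta_t$ circumvents the $c_v$-weight is exactly the point the paper exploits.
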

\begin{proof}
 We have from \eqref{cns.a}, \eqref{r_eq}, \eqref{E_theta}, and \eqref{E_tau} that
 \begin{align*}
 |\tau_t|\lesssim |(mu,u_x)|,\quad
 |\tau_{xt}|\lesssim |(u_{xx},u_x,mu,mu\tau_x)|,
 \end{align*}
 which combined with \eqref{E_1a} and \eqref{E_uxvx} yields 
 \begin{align*}
 \sup_{t\in[0,T]}\left\|\tau_t(t)\right\|^2 +\int_{0}^{T}
 \|\tau_{xt}(t)\|^2\mathrm{d}t\lesssim 1.
 \end{align*}
 By virtue of \eqref{r_eq}, \eqref{E_theta} and \eqref{E_tau}, we can deduce from \eqref{cns.e}  that
 \begin{align*}
 |c_v\theta_t|\lesssim 
 \left|\left(mu,u_x,\theta_{xx},\theta_x^2,\tau_x\theta_x,\theta_x,mu^2,u_x^2,w_x^2,v_x^2,v^2\right)\right|,
 \end{align*}
 which combined with \eqref{apriori5}, \eqref{E_1a}, \eqref{E_wx}, and \eqref{E_uxvx} implies
 \begin{align} \notag
 \|c_v\theta_t(t)\|^2&\lesssim
 \|(mu,u_x,\theta_{xx},\theta_x,\tau_x)(t)\|^2
 +\int_{\mathcal{I}}|(mu,u_x,w_x,v_x,v)|^4\\ \notag
 &\lesssim  \|(mu,u_x,\theta_{xx},\theta_x,\tau_x)(t)\|^2
 +\|(mu,u_x,w_x,v_x,v)\|^2\|(mu,u_x,w_x,v_x,v)\|_1^2\\[1mm]
 &\lesssim   \|(mu,u_x,\theta_{xx},\theta_x,\tau_x,u_{xx},w_x,w_{xx},v_{xx},v_x,v)(t)\|^2.
 \label{est2a}
 \end{align}
 Then it follows from Corollary \ref{cor_1}, Lemmas \ref{lem_wx}--\ref{lem_thex} that 
 \begin{align*}
 \int_{0}^{T}\|c_v\theta_t(t)\|^2\mathrm{d}t\lesssim 1.
\end{align*}

By virtue of \eqref{est2a} and \eqref{H1}, we have 
\begin{align}  \label{initial_1}
\|c_v\theta_t(0)\|\leq C,
\end{align}
for some $(\gamma-1)$-independent positive constant $C$.

 The other estimates in \eqref{E_1b} can be proved by a similar computation.
\end{proof}
\subsection{Estimates on second-order derivatives}
\label{subsec4}
In this subsection, we aim to derive the uniform bounds for $\|(\tau_{xx},u_{xx},v_{xx},w_{xx},\theta_{xx})(t)\|$.  
For this purpose, we make the estimates for $(u_t,v_t,w_t)$ in the next lemma. 
\begin{lemma} \label{lem_ut}
If the conditions listed in Lemma \ref{lem_bas} hold for a sufficiently small $\epsilon_1$, then
 \begin{equation} \label{E_utvtwt}
 \sup_{t\in[0,T]}\|(u_t,v_t,w_t)(t)\|^2 +
 \int_{0}^{T}\|(u_{xt},\tau_{tt},v_{xt},w_{xt})(t)\|^2\mathrm{d}t\lesssim 1.
 \end{equation}
\end{lemma}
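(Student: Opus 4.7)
The plan is to obtain the bounds on $\|u_t\|$, $\|v_t\|$, $\|w_t\|$ by differentiating the momentum equations \eqref{cns.b}--\eqref{cns.d} with respect to $t$, testing against $u_t$, $v_t$, $w_t$ respectively, and exploiting the resulting parabolic structure to produce dissipation in $u_{xt}$, $v_{xt}$, $w_{xt}$. The bound on $\tau_{tt}$ will then follow from the continuity equation \eqref{cns.a}. Since the boundary conditions \eqref{bdy} imply $u_t=v_t=w_t=0$ at $x=0,1$, the integrations by parts on the principal terms produce no boundary contributions. The initial data for $(u_t,v_t,w_t)|_{t=0}$ are bounded by an absolute constant via the equations and the $H^3$ bounds in \eqref{H1}, analogous to \eqref{initial_1}.

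I would start with the decoupled $w$-equation. Differentiating \eqref{cns.d} in $t$, multiplying by $w_t$ and integrating gives, after one integration by parts,
\begin{equation*}
\frac{1}{2}\frac{\mathrm{d}}{\mathrm{d}t}\|w_t\|^2 + \int_{\mathcal{I}}\frac{\mu r^{2m}}{\tau}w_{xt}^2
= \text{(commutator remainder)}.
\end{equation*}
The remainder consists of commutators between $\partial_t$ and the coefficients $\mu(\theta)$, $r^m$, $1/\tau$, producing factors of $\mu'(\theta)\theta_t$, $r_t=u$, and $\tau_t$, paired with $w_x$ or $w_{xt}$. Using Cauchy's inequality together with \eqref{apriori1}, \eqref{apriori5}, Corollary \ref{cor_1}, Lemma \ref{lem_wx}, and the crucial smallness $\|\theta_t\|_{L^\infty}\lesssim(\gamma-1)^{1/2}N$, every such term is either absorbed into $\tfrac{1}{2}\|w_{xt}\|^2$ or bounded by a Grönwall-admissible combination of $\|w_t\|^2$ and already-integrable quantities.

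The $u$- and $v$-equations proceed in the same spirit but with additional couplings. For \eqref{cns.b}, differentiating in $t$ and testing with $u_t$ produces the dissipation $\int \nu r^{2m} u_{xt}^2/\tau$; the centrifugal contribution $(v^2/r)_t$ is handled by Sobolev embedding on $v,v_t$ combined with Cauchy's inequality, while the pressure contribution $(r^m P_x)_t$, after integration by parts onto $(r^m u_t)_x$, yields a term containing $\theta_{xt}/\tau$. The latter is controlled using the time-integrated bound $\int_0^T\|\theta_{xt}\|^2\,\mathrm{d}t\lesssim (\gamma-1)N^2$ built into the definition of $\mathcal{E}_{0}(T)\leq N^2$ (cf.\ \eqref{E}, \eqref{apriori1}), which is much smaller than $1$. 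The analysis of \eqref{cns.c} is analogous, with the Hardy-type contribution $\mu\tau v/r^{2m}$ and the convective term $uv/r$ both absorbed via Cauchy's inequality and Sobolev embedding.

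Adding the three differential inequalities and applying Grönwall closes the bound on $\|u_t\|^2+\|v_t\|^2+\|w_t\|^2$ uniformly in time, together with the time integrals of $\|u_{xt}\|^2+\|v_{xt}\|^2+\|w_{xt}\|^2$. For $\tau_{tt}$, differentiating \eqref{cns.a} and using \eqref{r_eq} gives $\tau_{tt}=(r^m u)_{xt}$, which expands into terms involving $u_{xt}$, $u u_x$, and $u_t$; hence $\int_0^T\|\tau_{tt}\|^2\,\mathrm{d}t\lesssim 1$ follows from the previously established bounds. The principal obstacle is the careful bookkeeping of the temperature-dependent commutator remainders, in particular the pressure term's $\theta_{xt}$ piece and the $\mu'(\theta)\theta_t$, $\nu'(\theta)\theta_t$ terms, which are closed only by exploiting the smallness provided by $\gamma-1\ll 1$ built into the a priori assumptions \eqref{apriori1}--\eqref{apriori2}.
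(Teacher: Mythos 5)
Your proposal matches the paper's proof in all essentials: differentiate each momentum equation in $t$, test with $u_t$, $v_t$, $w_t$ (using $u_t=v_t=w_t=0$ on the boundary to kill the flux terms), absorb the commutator remainders — including the $\theta_{xt}$ and $\nu'(\theta)\theta_t$ pieces — via Cauchy's inequality, \eqref{apriori5}, and the previously established estimates, and then read off $\tau_{tt}=(r^mu)_{xt}$ from \eqref{cns.a}. The only cosmetic differences are the order in which the three equations are treated and your invocation of Grönwall where the paper simply integrates in time, since every right-hand-side term is already known to be time-integrable.
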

\begin{proof} The proof is divided into three steps.
	
\noindent {\em Step 1.} 
 We let $\partial_t$ act on \eqref{cns.b}, and multiply the resulting identity by $u_t$ to yield
 \begin{align} \label{id4.1} 
 \left[\frac{1}{2}u_t^2\right]_t+\frac{\nu r^{2m}u_{xt}^2}{\tau}
 +\left[u_tr^m P_t-u_t r^m \left(\frac{\nu (r^m u)_x}{\tau}\right)_t\right]_x
 =J_1+J_2+J_3,
 \end{align}
 with 
 \begin{align*}
  J_1&:=
  -(r^m)_xu_t\left[\frac{\nu(r^m u)_x}{\tau}\right]_t,\\[0.5mm]
 J_2&:=
 r^m u_{xt}\left[P_t-\left(\frac{\nu}{\tau}\right)_t(r^m u)_x
 -\frac{\nu}{\tau}\left((r^m u)_{xt}-r^m u_{xt}\right)\right],\\[0.5mm]
  J_3&:=u_t\left[\left(\frac{v^2}{r}\right)_t-(r^m)_tP_x+(r^m)_xP_t+
  (r^m)_t\left(\frac{\nu (r^mu)_x}{\tau}\right)_x-2m\left(r^{m-1}u \mu_x\right)_t\right].
 \end{align*}
 It follows  from \eqref{E_1a}, \eqref{E_wx} and \eqref{E_uxvx} that
 \begin{align} \label{E_uvw}
 \lrn (u,v,w)\rrn\lesssim 1.
 \end{align}
  To estimate $J_1$ and $J_2$, we first get from \eqref{r_eq} that
  \begin{align*}
  \left|(r^m u)_{xt}-r^m u_{xt}\right|
  &\lesssim\left|((r^m)_{tx}u,(r^m)_xu_t,(r^m)_tu_x)\right|
  \lesssim |(mu^2,u_t,uu_x)|,\\
  \left|(r^m u)_{xt}\right|
  &\lesssim|(mu^2,u_t,uu_x,u_{xt})|.
  \end{align*}
  Then we have from \eqref{apriori5} and \eqref{E_uvw} that
  \begin{align} 
  J_1\lesssim 
  |u_{t}|
  \left|\left(\theta_t\tau_t,\tau_t^2,mu,u_t,u_x,u_{xt}\right)\right|
  \lesssim  \epsilon u_{xt}^2+C(\epsilon)
  \left|\left(\tau_t,\tau_t^2,mu,u_t,u_x\right)\right|^2,
  \label{J1}
  \end{align}
 and
  \begin{align} 
  J_2\lesssim 
  |u_{xt}|
  \left|\left(\theta_t,\tau_t,\theta_t\tau_t,\tau_t^2,mu^2,u_t,uu_x\right)\right|
  \lesssim  \epsilon u_{xt}^2+C(\epsilon)
  \left|\left(\theta_t,\tau_t,\tau_t^2,mu,u_t,u_x\right)\right|^2.
  \label{J2}
  \end{align} 
 According to \eqref{cns.b}, we have from \eqref{E_theta},  \eqref{E_tau}, \eqref{apriori5}, and \eqref{E_uvw} that
 \begin{align*}
 \left|\left[\frac{\nu (r^m u)_x}{\tau}\right]_x\right|
 \lesssim \left|\left(u_t,v^2,\theta_t,\tau_t,\theta_x\right)\right|, 
 \end{align*}
and 
 \begin{align} 
 J_3
 \lesssim |u_t|\left|\left(v_t,v,\theta_x,\tau_x,\theta_t,\tau_t,u_t,\theta_{xt}\right)\right|\lesssim 
 \left|(u_t,v_t,v,\theta_x,\tau_x,\theta_t,\tau_t,\theta_{xt})\right|^2.
 \label{J3}
 \end{align}
 Integrating \eqref{id4.1} over $(0,t)\times\mathcal{I}$, using \eqref{J1}--\eqref{J3}, \eqref{E_1a}--\eqref{E_uxvx},  \eqref{apriori5}, and \eqref{E_1b} yield
 \begin{align}
 \|u_t(t)\|^2+\int_{0}^{t}\|u_{xt}\|^2
 \lesssim 1+\int_{0}^{t}\|\theta_{xt}\|^2+ \int_{0}^{t}\|\tau_t\|_{L^{\infty}}^2\|\tau_t\|^2 \lesssim 1+\int_{0}^{t}\|\tau_t\|_1^2\lesssim 1.  \label{E_ut}
 \end{align}

By virtue of \eqref{cns.a}, $\tau_{tt}=(r^m u)_{xt}$, and hence 
\begin{align*}
|\tau_{tt}|\lesssim |u_{xt}|+|(u_t,uu_x)|+|mu||(u_x,1)|.
\end{align*} 
Combining this with \eqref{E_1a},\eqref{E_1b}, \eqref{E_ut},  and \eqref{E_uvw} implies
\begin{align}
\int_{0}^{T}\|\tau_{tt}(t)\|^2\mathrm{d}t\lesssim 1. \label{E_itautt}
\end{align} 
 
\noindent {\em Step 2.} 
 Letting $\partial_t$ act on \eqref{cns.c}, and  multiplying the resulting identity by $v_t$  yield
 \begin{align}  \label{id4.2}
 \left[\frac{1}{2}v_t^2\right]_t+\frac{\mu r^{2m}v_{xt}^2}{\tau} -\left[v_t r^m \left(\frac{\mu r^m v_x}{\tau}\right)_t\right]_x =J_4+J_5,
 \end{align}
 with 
 \begin{align*}
 J_4&:=v_t\left[-\left(\frac{uv}{r}\right)_t+(r^m)_t\left(\frac{\mu r^m v_x}{\tau}\right)_x -\left(\frac{\mu \tau v}{r^{2m}}\right)_t\right],\\
 J_5&:= v_t \left[-(r^m)_x\left(\frac{\mu r^m v_x}{\tau}\right)_t+2\left(\mu r^{m-1}v_x\right)_t-m\left(\mu r^{m-1}v\right)_{xt}\right]-r^m v_{xt}v_x\left[\frac{\mu r^m }{\tau}\right]_t.
 \end{align*}
 By virtue of \eqref{E_theta}, \eqref{E_tau} and \eqref{E_uvw},  we derive from \eqref{cns.c} that
 \begin{align*}
 \left|\left[\frac{\mu r^m v_x}{\tau}\right]_x\right|
 \lesssim \left|\left(v_t,v,\theta_x,v_x\right)\right|, 
 \end{align*} 
 and
 \begin{align}
 J_4\lesssim |v_t|\left|\left(u_t,v_t,v,v_x,\theta_x,\tau_t,\theta_t\right)\right|.
 \label{J4}
 \end{align}
It follows from \eqref{apriori5} and \eqref{E_uvw} that
 \begin{align}\notag
 J_5&\lesssim
 |v_t|\left|\left(v_{xt},
 \tau_t v_x,v_x,v_t,\theta_{xt},\theta_x,\theta_t,u_x,mu\right)\right|
 +|v_{xt}v_x||(\theta_t,\tau_t,u)|\\
 &\lesssim \epsilon v_{xt}^2+C(\epsilon)
 \left|\left(v_t, \tau_t v_x,v_x,\theta_{xt},\theta_x,\theta_t,u_x,mu\right)\right|^2.
 \label{J5}
 \end{align} 
 Integrating \eqref{id4.2} over $(0,t)\times\mathcal{I}$, using \eqref{J4}--\eqref{J5}, \eqref{E_1a}--\eqref{E_uxvx}, \eqref{apriori5}, and \eqref{E_1b} yield
 \begin{align}
 \|v_t(t)\|^2+\int_{0}^{t}\|v_{xt}\|^2
 \lesssim 1+\int_{0}^{t}\|\theta_{xt}\|^2+
 \int_{0}^{t}\|\tau_t\|_{L^{\infty}}^2\|v_x\|^2
 \lesssim 1+\int_{0}^{t}\|\tau_t\|_1^2\lesssim 1.
 \label{E_vt}
 \end{align}

\noindent {\em Step 3.} 
 Let $\partial_t$ act on \eqref{cns.d}, and multiply the resulting identity by $w_t$ to get
 \begin{align}\label{id4.3}
 \left[\frac{1}{2}w_t^2\right]_t+\frac{\mu r^{2m}w_{xt}^2}{\tau}
 -\left[w_t r^m \left(\frac{\mu r^m w_x}{\tau}\right)_t\right]_x
 =J_6+J_7,
 \end{align}
    with
    \begin{align*}
    J_6&:=w_t\left(r^m\right)_t\left[\frac{\mu r^m w_x}{\tau}\right]_x,\\
    J_7&: =-r^m w_x w_{xt}\left[\frac{\mu r^m}{\tau}\right]_t
    - w_t\left[
    \left(r^m\right)_x\left(\frac{\mu r^m w_x}{\tau}\right)_t
    -m\left(\mu r^{m-1}w_x\right)_t\right].
    \end{align*}
According to \eqref{cns.d} and \eqref{E_uvw}, we infer
\begin{align}
\label{J6}
J_6&\lesssim |w_t||(w_t,w_x)|\lesssim |(w_t,w_x)|^2,\\[0.5mm] \notag
J_7&\lesssim 
|w_{xt}||(\theta_tw_x,\tau_tw_x,w_x,w_t)|
+|w_t||(\theta_tw_x,\tau_tw_x,w_x)|\\
&\lesssim \epsilon w_{xt}^2+C(\epsilon)
|(w_t,\theta_tw_x,w_x,\tau_t w_x)|^2.
\label{J7}
\end{align}
 Integrating \eqref{id4.3} over $(0,t)\times\mathcal{I}$,
 using \eqref{J4}--\eqref{J5},
 \eqref{E_1a}--\eqref{E_uxvx}, \eqref{apriori5}, and \eqref{E_1b} yield
 \begin{align}
 \|w_t(t)\|^2+\int_{0}^{t}\|w_{xt}\|^2
 \lesssim 1+
 \int_{0}^{t}\|\tau_x\|_{L^{\infty}}^2\|w_x\|^2
 \lesssim 1+\int_{0}^{t}\|\tau_t\|_1^2\lesssim 1.
 \label{E_wt}
 \end{align}
Combine the estimates \eqref{E_ut}, \eqref{E_itautt}, \eqref{E_vt} and \eqref{E_wt} to
derive  \eqref{E_utvtwt}.
\end{proof}

Noting that the equations for $(u,v,w)$ are parabolic, we deduce the uniform  bounds for the $L_x^2$-norms of $(u_{xx},v_{xx},w_{xx})$ in the following lemma.
\begin{lemma}   If the conditions listed in Lemma \ref{lem_bas} hold for a sufficiently small $\epsilon_1$, then
 \begin{equation} \label{E_uxx}
 \sup_{t\in[0,T]}\|(u_{xx},v_{xx},w_{xx},\tau_{xt})(t)\|\leq C_6.
 \end{equation}
\end{lemma}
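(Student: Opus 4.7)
The plan is to treat equations \eqref{cns.b}, \eqref{cns.c}, and \eqref{cns.d} as algebraic identities that determine the second spatial derivatives $u_{xx}$, $v_{xx}$, $w_{xx}$ pointwise in $t\in[0,T]$. Expanding the principal viscous term in \eqref{cns.b} using $r_x=r^{-m}\tau$, I would write
\begin{equation*}
r^m\left[\frac{\nu(r^m u)_x}{\tau}\right]_x = \frac{\nu r^{2m}}{\tau}u_{xx} + \mathcal{L}_u,
\end{equation*}
where $\mathcal{L}_u$ collects products of smooth bounded coefficients in $(\tau,\theta,r)$ with the first-order quantities $u_x$, $u$, $\tau_x$, $\theta_x$. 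Since $\nu r^{2m}/\tau$ has positive upper and lower bounds by Lemmas \ref{lem_bas} and \ref{lem_kanel}, solving \eqref{cns.b} for $u_{xx}$ and taking $L^2(\mathcal{I})$-norms expresses $\|u_{xx}(t)\|$ in terms of $\|u_t(t)\|$, controlled by Lemma \ref{lem_ut}, together with the terms $v^2/r$, $r^m P_x$, $r^{m-1}u\mu_x$ already bounded by \eqref{E_1a}, \eqref{E_uxvx}, \eqref{E_thex}, and the pointwise estimate \eqref{E_uvw}.

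The delicate contributions in $\mathcal{L}_u$ are nonlinear products such as $\tau_x u_x$ and $\theta_x u_x$, where $\tau_x$ and $\theta_x$ are presently controlled only in $L^2$. The idea is to place $u_x$ in $L^\infty$ via the Gagliardo--Nirenberg interpolation
\begin{equation*}
\|u_x\|_{L^\infty} \lesssim \|u_x\|^{1/2}\|u_{xx}\|^{1/2} + \|u_x\|,
\end{equation*}
so that Cauchy's inequality produces a bound of the form $\epsilon\|u_{xx}(t)\| + C(\epsilon)\cdot(\text{already bounded})$. Because the resulting inequality is algebraic in $t$ rather than a Gr\"onwall-type differential one, the $\epsilon\|u_{xx}(t)\|$ piece can be freely absorbed into the left-hand side for $\epsilon$ small, yielding the uniform bound $\|u_{xx}(t)\|\leq C_6$. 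The same procedure applied to \eqref{cns.c} and \eqref{cns.d} produces $\|v_{xx}(t)\|\leq C_6$ and $\|w_{xx}(t)\|\leq C_6$, drawing on the bounds for $\|v_t(t)\|$ and $\|w_t(t)\|$ from Lemma \ref{lem_ut}.

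Finally, for $\tau_{xt}$ I would differentiate \eqref{cns.a} once in $x$ to obtain
\begin{equation*}
\tau_{xt} = (r^m u)_{xx} = r^m u_{xx} + 2mr^{m-1}r_x u_x + \bigl[m(m-1)r^{m-2}r_x^2 + mr^{m-1}r_{xx}\bigr]u,
\end{equation*}
with $r_x=r^{-m}\tau$ and $r_{xx}$ expressible through $r_x$ and $\tau_x$, so $\|\tau_{xt}(t)\|$ is immediate from the bound on $\|u_{xx}(t)\|$ together with previously controlled quantities. The main obstacle throughout is the absorption step: without it, one would need $\tau_x,\theta_x\in L^\infty$, which is not yet available. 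The algebraic (non-differential) character of the elliptic identity is what makes moving $\epsilon\|u_{xx}(t)\|$ to the left legitimate and closes the estimate.
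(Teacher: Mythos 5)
Your proposal is correct and follows essentially the same route as the paper: solve \eqref{cns.b}--\eqref{cns.d} algebraically for $u_{xx},v_{xx},w_{xx}$ using the positive bounds on $\nu r^{2m}/\tau$ from Lemmas \ref{lem_bas} and \ref{lem_kanel}, control the remainder via the previously established bounds (in particular $\|(u_t,v_t,w_t)(t)\|$ from Lemma \ref{lem_ut}) together with interpolation and absorption of the $\epsilon\|u_{xx}(t)\|$ term, and then read off $\tau_{xt}=(r^m u)_{xx}$ from \eqref{cns.a}. Your explicit treatment of the products $\tau_x u_x$ (equivalently $\tau_x\tau_t$) via Gagliardo--Nirenberg is exactly the step the paper leaves implicit when it invokes Corollary \ref{cor_1} and Lemmas \ref{lem_wx}--\ref{lem_ut}.
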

\begin{proof}
It follows from \eqref{r_eq}, \eqref{E_theta}, and \eqref{E_tau} that
\begin{align}
|(r^m)_x|\lesssim m,\quad
|(r^m)_{xx}|\lesssim |(\tau_x,m)|,\quad
|(r^m)_{xxx}|\lesssim |(\tau_{xx},\tau_x,m)|,
\end{align}
and hence
\begin{align} \label{est4.1}
|(r^m u)_{xx}-r^m u_{xx}|\lesssim |(\tau_x u,mu,u_x)|.
\end{align}
According to \eqref{cns.b}, we get
\begin{align*}
\frac{\nu r^{2m} u_{xx}}{\tau}
= u_t-\frac{v^2}{r}+r^mP_x+2mr^{m-1}u\mu_x
-r^m\left[\left(\frac{\nu(r^m u)_x}{\tau}\right)_x-
\frac{\nu r^m u_{xx}}{\tau}\right].
\end{align*}
Using \eqref{E_theta},  \eqref{E_tau} and \eqref{est4.1}, we obtain
\begin{align*}
|u_{xx}|\lesssim
\left|\left(u_t,v,\theta_x,\tau_x\right)\right|+
\left|\left(\theta_x\tau_t,\tau_x\tau_t\right)\right|+
\left|\left(\tau_x u,mu,u_x\right)\right|,
\end{align*}
which combined
with Corollary \ref{cor_1}, Lemmas \ref{lem_wx}--\ref{lem_ut}
yields
\begin{align*}
 \sup_{t\in[0,T]}\|u_{xx}(t)\|\lesssim 1.
\end{align*}
Since the proof of the other estimates in \eqref{E_uxx} can be shown in a similar way, we omit it.
\end{proof}

The following lemma concerns the bounds for  the $L^2_x$-norms of $c_v\theta_t$ and $\theta_{xx}$.
\begin{lemma}\label{lem_thet}
   If the conditions listed in Lemma \ref{lem_bas} hold for a sufficiently small $\epsilon_1$, then
 \begin{equation} \label{E_thet}
 \sup_{t\in[0,T]}\left\|(c_v\theta_t,\theta_{xx})(t)\right\|^2 +\int_{0}^{T}
 \|\sqrt{c_v}\theta_{xt}(t)\|^2\mathrm{d}t\leq C_7.
 \end{equation}
\end{lemma}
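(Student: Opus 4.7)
The natural plan is the standard energy method applied to the $t$-differentiated internal energy equation, which yields both the $\sup_t \|c_v\theta_t\|^2$ bound and the time-integrated dissipation in $\|\sqrt{c_v}\theta_{xt}\|^2$, after which the bound on $\|\theta_{xx}\|$ follows by reading it off the equation itself.

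More concretely, I would first apply $\partial_t$ to \eqref{cns.e} and multiply the resulting identity by $c_v\theta_t$. Integration over $\mathcal{I}$ together with integration by parts in the diffusive term gives an identity of the form
\begin{equation*}
\frac{\mathrm{d}}{\mathrm{d}t}\int_{\mathcal{I}}\frac{(c_v\theta_t)^2}{2}
+c_v\int_{\mathcal{I}}\frac{\kappa r^{2m}\theta_{xt}^2}{\tau}
=\sum_q \mathcal{K}_q,
\end{equation*}
where the boundary contributions vanish thanks to $\theta_x|_{\partial\mathcal{I}}=0$, and hence $\theta_{xt}|_{\partial\mathcal{I}}=0$. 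The remainder terms $\sum_q\mathcal{K}_q$ come from three sources: the commutator pieces of $[\kappa r^{2m}\theta_x/\tau]_{xt}$ that are not the pure dissipation (involving $\theta_t\theta_x$, $\tau_t\theta_x$, $r^{2m}_t\theta_x$, and products thereof), the convective/pressure term $-c_v\int\theta_t[P(r^mu)_x]_t$, and the dissipative source $c_v\int\theta_t\mathcal{Q}_t$. I would estimate each by Cauchy's inequality, peeling off $c_v\theta_{xt}^2$ with a small coefficient and controlling the remaining factors by the a priori bounds already established: \eqref{apriori5} to make $c_v$ times a product involving $\theta_t$ or $\theta_x$ of order $(\gamma-1)^{1/2}$ come out uniformly bounded, together with the pointwise bounds on $\tau,\theta$, the $H^1$ bounds on $(u,v,w,u_x,v_x,w_x)$ from Corollary \ref{cor_1} and Lemmas \ref{lem_wx}--\ref{lem_thex}, the uniform bounds on $(u_t,v_t,w_t)$ from Lemma \ref{lem_ut}, and the time-integrated bounds on $(u_{xt},v_{xt},w_{xt},\tau_{tt})$ from \eqref{E_utvtwt}.

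After integrating the resulting differential inequality on $(0,t)$ and using the initial bound \eqref{initial_1}, namely $\|c_v\theta_t(0)\|\le C$, I would conclude $\sup_{t\in[0,T]}\|c_v\theta_t(t)\|^2+\int_0^T\|\sqrt{c_v}\theta_{xt}(t)\|^2\mathrm{d}t\lesssim 1$. For the $\|\theta_{xx}\|$ bound, I would then solve \eqref{cns.e} algebraically for $\theta_{xx}$,
\begin{equation*}
\frac{\kappa r^{2m}\theta_{xx}}{\tau}
=c_v\theta_t+P(r^mu)_x-\left(\frac{\kappa r^{2m}}{\tau}\right)_x\theta_x-\mathcal{Q},
\end{equation*}
and use the pointwise lower bound on $\kappa r^{2m}/\tau$ provided by \eqref{transport}, \eqref{E_theta}, \eqref{E_tau}, and \eqref{E_r}, so that $\|\theta_{xx}\|$ is controlled by $\|c_v\theta_t\|$ plus terms already estimated by the preceding lemmas (namely products of $\theta_x,\tau_x,u_x,v_x,w_x,u,v$, handled by $L^\infty$-$L^2$ H\"older).

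The main technical hurdle will be the bookkeeping for the powers of $c_v=1/(\gamma-1)$. Each occurrence of $c_v$ multiplying an error term must be compensated either by the dissipation $c_v\|\sqrt{\kappa/\tau}\,r^m\theta_{xt}\|^2$ it can be absorbed into, or by a factor $(\gamma-1)^{1/2}$ from $\|\theta_t\|_{L^\infty}$ or $\|\theta_x\|_{L^\infty}$ via \eqref{apriori1}; the only terms without such smallness (e.g. pieces of $\mathcal{Q}_t$ of the form $u_xu_{xt}$ or $v_xv_{xt}$) must be estimated with $c_v\theta_t$ in $L^2_x$ and the gradient products in $L^\infty_xL^2_t$ via Sobolev embedding and \eqref{E_utvtwt}, so that their time integrals close against the already established bounds rather than against the $\sup_t\|c_v\theta_t\|^2$ being constructed.
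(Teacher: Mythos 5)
Your proposal is correct and follows essentially the same route as the paper: $\partial_t$ applied to \eqref{cns.e} tested against $c_v\theta_t$, boundary terms killed by $\theta_{xt}|_{\partial\mathcal{I}}=0$, the commutator/pressure/$\mathcal{Q}_t$ remainders absorbed via Cauchy's inequality into the dissipation and the previously established time-integrated bounds (in particular $\int_0^T\|c_v\theta_t\|^2\lesssim1$ from \eqref{E_1b} and the $L^2_tL^2_x$ bounds on $(\tau_{tt},v_{xt},w_{xt},u_{xt})$ from Lemma \ref{lem_ut}), the initial bound \eqref{initial_1}, and finally $\theta_{xx}$ read off algebraically from \eqref{cns.e}. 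Your closing remarks on the $c_v$ bookkeeping accurately reflect how the paper handles the term $c_v\theta_x\theta_{xt}[\kappa r^{2m}/\tau]_t$ via $\sup_t\|\sqrt{c_v}\theta_x\|^2$ and the quartic/sextic pieces of $\mathcal{Q}_t^2$ via Sobolev interpolation.
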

\begin{proof}
Let $\partial_t$ act on \eqref{cns.e} and multiply the resulting identity with $c_v\theta_t$ to discover
 \begin{align}\notag
 &\left[\frac{\left(c_v\theta_t\right)^2}{2}\right]_t+\frac{c_v \kappa r^{2m}\theta_{xt}^2}{\tau}
 -\left[c_v\theta_t\left(\frac{\kappa r^{2m} \theta_x}{\tau}\right)_t\right]_x\\
 &\quad=c_v\theta_t\mathcal{Q}_t-c_v\theta_t\left(P(r^m u)_x\right)_t-
 c_v\theta_x\theta_{xt}\left[\frac{\kappa r^{2m}}{\tau}\right]_t. 
 \notag
 \end{align}
By virtue of the boundary conditions \eqref{bdy}, \eqref{E_1b}, \eqref{E_thex}, \eqref{E_1a}, and \eqref{initial_1}, we integrate the last identity to get
\begin{align}\notag
&\|c_v\theta_t(t)\|^2+\int_{0}^{t}\|\sqrt{c_v}\theta_{xt}\|^2\\ \notag
&\quad\lesssim  \|c_v\theta_t(0)\|^2+\int_{0}^{t}\|\left(c_v\theta_t,\mathcal{Q}_t,
\left(P(r^m u)_x\right)_t\right)\|^2
+\int_{0}^{t}\int_{\mathcal{I}}c_v\theta_x^2|(\theta_t,\tau_t,mu)|^2\\ \notag
&\quad \lesssim 1+
\int_{0}^{t}\|\left(\mathcal{Q}_t,\left(P(r^m u)_x\right)_t\right)\|^2
+\sup_{s\in[0,T]}\|\sqrt{c_v}\theta_x(s)\|^2\int_{0}^{t}\|(\theta_t,\tau_t,mu)\|_1^2\\
&\quad \lesssim 1+\int_{0}^{t}\|\left(\mathcal{Q}_t,
\theta_t\tau_t,\tau_t^2,\tau_{tt}\right)\|^2. \label{est4.2}
\end{align}
Using \eqref{E_theta},   \eqref{E_tau} and \eqref{E_uvw},
after some elementary calculations, we have
\begin{align}\notag
|\mathcal{Q}_t|\lesssim~&
|(\theta_t,\tau_t)|\tau_t^2+|\tau_t\tau_{tt}|+
|\theta_t||(u_x,mu)|+|(u_x,mu,u_t,u_{xt},u_xu_t)|+|w_xw_{xt}|\\
&+w_x^2|(\theta_t,\tau_t,mu)|
+|(\theta_t,\tau_t)||(v_x,v)|^2
+|(v_x,v)||(v_{xt},v_t,v_x,v_x\tau_t,v)|, \notag
\end{align}
and
\begin{align*}
\mathcal{Q}_t^2\lesssim~&
|(\theta_t,\tau_t,mu,w_x,v_x,v)|^6+\tau_t^2\tau_{tt}^2+w_x^2w_{xt}^2
+v_x^2v_{xt}^2\\
&+
|(\theta_t,u_x,\tau_t,u_t,v_x,v_t)|^4+
|(\theta_t,u_x,mu,u_t,u_{xt},v_{xt},v_t,v_x,v)|^2.
\end{align*}
Employing Sobolev's inequality, \eqref{apriori5}, Corollary \ref{cor_1}, and Lemmas \ref{lem_wx}--\ref{lem_ut} yields
\begin{align}\notag 
&\int_{0}^{t}\int_{\mathcal{I}} |(\theta_t,\tau_t,mu,w_x,v_x,v)|^6\\  \label{est4.3}
&\quad\lesssim \sup_{[0,t]}\|(\theta_t,\tau_t,mu,w_x,v_x,v)\|^4
\int_{0}^{t}\|(\theta_t,\tau_t,mu,w_x,v_x,v)\|_1^2
\lesssim 1,\\ \notag
&\int_{0}^{t}\int_{\mathcal{I}}|(\theta_t,\theta_x,u_x,\tau_t,u_t,v_x,v_t)|^4\\ 
&\quad\lesssim \sup_{[0,t]}\|(\theta_t,\theta_x,u_x,\tau_t,u_t,v_x,v_t)\|^2
\int_{0}^{t}\|(\theta_t,\theta_x,u_x,\tau_t,u_t,v_x,v_t)\|_1^2
\lesssim 1, \label{est4.4}
\end{align}
and 
\begin{align}
\int_{0}^{t}\int_{\mathcal{I}}\left[\tau_t^2\tau_{tt}^2+w_x^2w_{xt}^2
+v_x^2v_{xt}^2\right]\lesssim 
\sup_{[0,t]}\|(\tau_t,w_x,v_x)\|_1^2\int_{0}^{t}\|(\tau_{tt},w_{xt},v_{xt})\|^2
\lesssim 1. \label{est4.5}
\end{align}
Combining \eqref{est4.2}--\eqref{est4.5} implies
 \begin{equation} \label{E_thet1}
 \sup_{t\in[0,T]}\left\|c_v\theta_t(t)\right\|^2 +\int_{0}^{T}
 \|\sqrt{c_v}\theta_{xt}(t)\|^2\mathrm{d}t\lesssim 1.
 \end{equation}

By virtue of the equation \eqref{cns.e}, we have
\begin{align*}
\frac{\kappa r^{2m} \theta_{xx}}{\tau}
=c_v\theta_t+P\tau_t
-\left(\frac{\kappa r^{2m}}{\tau}\right)_x \theta_x-\mathcal{Q},
\end{align*}
which implies
\begin{align*}
|\theta_{xx}|\lesssim |(c_v\theta_t,\tau_t)|+|\theta_x||(\theta_x,\tau_x,m)|
+|(\tau_t^2,u_x,mu,w_x^2,v_x^2,v^2)|.
\end{align*}
We use this estimate and \eqref{est4.4}, \eqref{E_thet1} to have
 \begin{equation} \label{E_thet2}
 \sup_{t\in[0,T]}\left\|\theta_{xx}(t)\right\|^2 \lesssim 1.
 \end{equation}
The estimate \eqref{E_thet} then follows from \eqref{E_thet1} and \eqref{E_thet2}.
\end{proof}

In the following lemma we make an estimate for $((u_{xxx},v_{xxx},w_{xxx},\theta_{xxx})$ in $L^{2}(0,T;L^2(\mathcal{I}))$.
\begin{lemma} \label{lem_iuxxx}
 If the conditions listed in Lemma \ref{lem_bas} hold for a sufficiently small $\epsilon_1$, then
 \begin{equation} \label{E_iuxxx}
 \int_{0}^{T}\left\|\left(u_{xxx},v_{xxx},w_{xxx},
 \frac{\theta_{xxx}}{\sqrt{c_v}}\right)(t)\right\|^2
 \mathrm{d}t\lesssim 1+\int_{0}^{T}\|\tau_{xx}(t)\|^2\mathrm{d}t.
 \end{equation}
\end{lemma}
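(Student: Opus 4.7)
The plan is to express each third-order spatial derivative algebraically from the corresponding evolution equation in \eqref{cns} and then take the $L^2_x$-norm squared, integrate over $(0,T)$, and invoke the estimates already proved in Lemmas \ref{lem_ut} and \ref{lem_thet}. The idea is that the parabolic structure of the equations allows us to trade one spatial derivative for one temporal derivative.

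First, I would differentiate \eqref{cns.b} once in $x$. Expanding $\left[\nu(r^m u)_x/\tau\right]_{xx}$, the top-order term is $\nu r^{2m}u_{xxx}/\tau$, so I can solve for $u_{xxx}$:
\begin{align*}
\frac{\nu r^{2m}}{\tau}u_{xxx}
= u_{xt}-\Bigl(\tfrac{v^{2}}{r}\Bigr)_{x}+\bigl(r^{m}P_{x}\bigr)_{x}+\bigl(2mr^{m-1}u\mu_{x}\bigr)_{x}-\mathcal{R}(t,x),
\end{align*}
where $\mathcal{R}$ collects all terms of order $\leq 2$ in $x$ times at most one factor of $\tau_{xx}$. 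Using \eqref{E_theta}, \eqref{E_tau}, the bound $|P_{xx}|\lesssim|\theta_{xx}|+|\tau_{xx}|+|(\theta_x,\tau_x)|^2$, and the already established pointwise bounds on $\tau,\tau_x,\theta,\theta_x,u,u_x,v,v_x,w,w_x$, I obtain pointwise
\begin{align*}
|u_{xxx}|\lesssim |u_{xt}|+|\theta_{xx}|+|\tau_{xx}|+\text{lower order},
\end{align*}
where ``lower order'' denotes terms whose $L^{2}_{t}L^{2}_{x}$ norms are already controlled by Corollary \ref{cor_1} and Lemmas \ref{lem_wx}--\ref{lem_thet}. Squaring, integrating over $(0,T)\times\mathcal{I}$, and invoking $\int_{0}^{T}\|u_{xt}\|^{2}\lesssim 1$ from Lemma \ref{lem_ut} together with $\int_{0}^{T}\|\theta_{xx}\|^{2}\lesssim 1$ from Lemma \ref{lem_thex} delivers $\int_{0}^{T}\|u_{xxx}\|^{2}\lesssim 1+\int_{0}^{T}\|\tau_{xx}\|^{2}$.

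I would carry out the analogous manipulation for \eqref{cns.c} and \eqref{cns.d} to handle $v_{xxx}$ and $w_{xxx}$, where the leading elliptic coefficients are $\mu r^{2m}/\tau$ and no new obstruction appears since no pressure term is present; the right-hand sides involve $v_{xt}$, $w_{xt}$ (controlled by Lemma \ref{lem_ut}) together with lower-order pieces and at most one factor of $\tau_{xx}$. For $\theta_{xxx}$, differentiating \eqref{cns.e} once in $x$ and isolating $\theta_{xxx}$ gives
\begin{align*}
\frac{\kappa r^{2m}}{\tau}\,\theta_{xxx}=c_{v}\theta_{xt}+\bigl(P(r^{m}u)_{x}\bigr)_{x}-\mathcal{Q}_{x}+\text{lower order in }\theta_{xx},
\end{align*}
so dividing by $\sqrt{c_{v}}$ and using $\int_{0}^{T}\|\sqrt{c_{v}}\theta_{xt}\|^{2}\lesssim 1$ from Lemma \ref{lem_thet} together with $\lrn \theta_{x}\rrn\lesssim 1$ and the controlled quantities $u_{x},v_{x},w_{x},\tau_{x}$ yields the desired bound for $\theta_{xxx}/\sqrt{c_{v}}$.

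The main obstacle is the handling of $\tau_{xx}$: when differentiating the pressure term $(r^{m}P_{x})_{x}$ one produces $\theta\tau_{xx}/\tau^{2}$, and when differentiating the viscous flux $\bigl[\nu(r^{m}u)_{x}/\tau\bigr]_{x}$ one produces terms proportional to $(r^{m}u)_{x}\tau_{xx}/\tau^{2}$, neither of which can be absorbed at this stage because a uniform bound on $\|\tau_{xx}\|$ is not yet available. These are precisely the contributions that survive into the right-hand side of \eqref{E_iuxxx}. All remaining nonlinearities will be controlled by Sobolev embedding of the form $\lrn f \rrn \lesssim \|f\|_{1}$ applied to the first-order quantities bounded in Corollary \ref{cor_1} and Lemmas \ref{lem_wx}--\ref{lem_thet}, plus the $L^{2}_{t}L^{2}_{x}$ bounds on the time derivatives from Lemmas \ref{lem_ut} and \ref{lem_thet}, so no further estimate is needed beyond routine algebra.
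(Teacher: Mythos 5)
Your proposal is correct and follows essentially the same route as the paper: solve each equation of \eqref{cns} algebraically for the third spatial derivative after one $x$-differentiation, bound it pointwise by $|u_{xt}|$ (resp.\ $|v_{xt}|$, $|w_{xt}|$, $|\sqrt{c_v}\theta_{xt}|$) plus $|\tau_{xx}|$, $|\theta_{xx}|$, $|\mathcal{Q}_x|$ and lower-order pieces, and close with the $L^2_tL^2_x$ bounds on the mixed derivatives from Lemmas \ref{lem_ut} and \ref{lem_thet} together with Sobolev interpolation for the quartic and sextic nonlinearities. The only part you compress is the explicit verification that $\int_0^T\|\mathcal{Q}_x\|^2\lesssim 1$ and that products such as $\tau_{xx}^2\tau_t^2$, $\tau_x^2\tau_{xt}^2$, $\tau_x^4$ are admissible, which the paper carries out via $\|f\|_{L^\infty}^2\lesssim\|f\|\,\|f\|_1$ using the uniform bounds on $\|\tau_t\|_1$ and $\|\tau_{xt}\|$ already available at this stage; this is routine as you assert.
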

\begin{proof}
Differentiating \eqref{cns.b} with respect to $x$, we have
 \begin{align*}\notag
 &\frac{r^{2m}\nu u_{xxx}}{\tau}+(r^m)_x\left[\frac{\nu\tau_t}{\tau}\right]_x+r^m\left[\left(\frac{\nu}{\tau}\right)_x\tau_t\right]_x+r^m\left(\frac{\nu}{\tau}\right)_x(r^m u)_{xx}\\
&\quad  =
 u_{xt}-\left[\frac{v^2}{r}\right]_x+\left(r^m P_x\right)_x+2m\left(r^{m-1}u \mu_x\right)_x-\frac{r^m \nu }{\tau}\left[(r^m u)_{xxx}-r^m u_{xxx}\right].
 \end{align*}
 By a direct computation, applying \eqref{E_theta}, \eqref{E_tau}, \eqref{apriori5}, and \eqref{E_uvw} gives
\begin{align}
\notag |u_{xxx}|
\lesssim ~&|u_{xt}|+|(v_x,v,\theta_x,\tau_x,\theta_{xx},\tau_{xx},\tau_x^2,u_x)|
+|u||(\tau_{xx},\tau_x,m)|+|u_x||(\tau_x,1)|+|u_{xx}|\\
&+ |(\tau_{tx},\theta_x\tau_t,\tau_x\tau_t)|
+|(\theta_{xx}\tau_t,\tau_{xx}\tau_t,\tau_t,\tau_x^2\tau_t,\theta_x\tau_{xt},\tau_x\tau_{xt})|
+|\tau_{tx}||(\theta_x,\tau_x)|. \label{est_uxxx}
\end{align} 
Applying \eqref{E_uvw}, Corollary \ref{cor_1} and Lemmas \ref{lem_wx}--\ref{lem_thet}, we infer
\begin{align*}
\int_{0}^{T}\|u_{xxx}\|^2
\lesssim 1+\int_{0}^{T}\|\tau_{xx}\|^2
+\int_{0}^{T}\int_{\mathcal{I}} \left[\tau_x^4+\theta_{xx}^2\tau_t^2+
\tau_{xx}^2\tau_t^2+\tau_x^4\tau_t^2+\tau_x^2\tau_{xt}^2\right].
\end{align*}
The last term on the right can be estimated by employing the Sobolev's inequality as
\begin{align*} \notag
&\int_{0}^{T}\int_{\mathcal{I}} \left[\theta_{xx}^2\tau_t^2+
\tau_{xx}^2\tau_t^2+\tau_x^4\tau_t^2+\tau_x^4+\tau_x^2\tau_{xt}^2\right]\\
&\quad \lesssim
\sup_{t\in[0,T]}\|\tau_t\|_{L^{\infty}}^2\int_{0}^{T}\left[\|(\theta_{xx},\tau_{xx})\|^2+\|\tau_{xx}\|\|\tau_x\|^3\right]
+\int_{0}^{T}\|\tau_x\|\|\tau_{xx}\|\|(\tau_x,\tau_{tx})\|^2\\
&\quad \lesssim
1+\int_{0}^{T}\|\tau_{xx}\|^2.
\end{align*}
Here we have used \eqref{E_1a}, \eqref{E_thex} and \eqref{E_uxx}. Hence we get
\begin{align}
\int_{0}^{T}\|u_{xxx}\|^2
\lesssim 1+\int_{0}^{T}\|\tau_{xx}\|^2.
\end{align}

The estimate for $(v,w)$ in \eqref{E_iuxxx} can be obtained similarly.

We next show the estimate of $\theta_{xxx}$ in \eqref{E_iuxxx}.
Differentiate \eqref{cns.e} with respect to $x$ to get
\begin{align*}
\frac{\kappa r^{2m}\theta_{xxx}}{\tau}
=c_v\theta_{xt}+\left(P\tau_t\right)_x-
\left[\frac{\kappa r^{2m}}{\tau}\right]_{xx}\theta_x
-2\left[\frac{\kappa r^{2m}}{\tau}\right]_{x}\theta_{xx}-\mathcal{Q}_x,
\end{align*}
which combined with \eqref{apriori5} yields
\begin{align}\label{est_thexxx}
\frac{|\theta_{xxx}|}{\sqrt{c_v}}&\lesssim 
\left|\sqrt{c_v}\theta_{xt}\right|
+\left|\left(\theta_x\tau_t,\tau_x\tau_t,\tau_{xt}\right)\right|
+\left|\left(\theta_{xx},\tau_{xx},\tau_x^2,\theta_x,\tau_x\right)\right|
+\left|\left(\theta_{xx},\tau_{x}\theta_{xx},\mathcal{Q}_x\right)\right|,\\ \label{est_thext1}
\left|\sqrt{c_v}\theta_{xt}\right|&\lesssim 
\frac{|\theta_{xxx}|}{\sqrt{c_v}}
+\left|\left(\theta_x\tau_t,\tau_x\tau_t,\tau_{xt}\right)\right|
+\left|\left(\theta_{xx},\tau_{xx},\tau_x^2,\theta_x,\tau_x\right)\right|
+\left|\left(\theta_{xx},\tau_{x}\theta_{xx},\mathcal{Q}_x\right)\right|.
\end{align}
Applying \eqref{E_uvw}, Corollary \ref{cor_1} and Lemmas \ref{lem_wx}--\ref{lem_thet}, we infer
\begin{align*}
\int_{0}^{T}\left\|\frac{\theta_{xxx}}{\sqrt{c_v}}\right\|^2
&\lesssim 1+\int_{0}^{T}\|\tau_{xx}\|^2
+\int_{0}^{T}\int_{\mathcal{I}} \left[\theta_{xx}^2\tau_x^2+\mathcal{Q}_x^2\right]\\
&\lesssim 1+\int_{0}^{T}\|\tau_{xx}\|^2
+\sup_{[0,T]}\|\theta_{xx}\|^2\int_{0}^{T}\|\tau_{xx}\|\|\tau_x\|
+\int_{0}^{T}\|\mathcal{Q}_x\|^2\\
&\lesssim 1+\int_{0}^{T}\|\tau_{xx}\|^2
+\int_{0}^{T}\|\mathcal{Q}_x\|^2.
\end{align*}
To conclude \eqref{E_iuxxx}, it remains to prove
\begin{align}
\int_{0}^{T}\|\mathcal{Q}_x\|^2\lesssim 1. \label{est4.6}
\end{align}
According to the definition of $\mathcal{Q}$, we can compute that
\begin{align*} 
|\mathcal{Q}_x|\lesssim 
~& \tau_t^2|(\theta_x,\tau_x)|+|\tau_t\tau_{xt}|+
|\theta_x||(mu,u_x)|+|(mu,\tau_x,u_x,u_{xx},u_x^2)|\\  
&+w_x^2|(\theta_x,\tau_x,1)|+|w_xw_{xx}|+
|(\theta_x,\tau_x)||(v_x,v)|^2+|(v_x,v)||(v_{xx},v_x)|,
\end{align*}
which yields
\begin{align} \notag
\mathcal{Q}_x^2\lesssim~&
|(\theta_x,\tau_x,\tau_t,w_x,v,v_x)|^6
+|(\theta_x,mu,u_x,w_x,v,v_x)|^4\\
&+|(mu,\tau_x,u_x)|^2+\tau_t^2\tau_{xt}^2+w_x^2w_{xx}^2+v_x^2v_{xx}^2+u_{xx}^2+v_{xx}^2.\label{est_Qx2}
\end{align}
Applying \eqref{est4.3}--\eqref{est4.5}, 
Corollary \ref{cor_1}, and Lemmas \ref{lem_wx}--\ref{lem_thet}, we
can deduce \eqref{est4.6} and therefore complete the proof of this lemma.
\end{proof}

\begin{lemma} \label{lem_tauxx}
If the conditions listed in Lemma \ref{lem_bas} hold for a sufficiently small $\epsilon_1$, then
 \begin{align} \label{E_tauxx}
 \sup_{t\in[0,T]}\left\|\tau_{xx}(t)\right\|^2 +
 \int_{0}^{T}
 \left\|\left(\tau_{xx},u_{xxx},\tau_{xxt},v_{xxx},w_{xxx},
 \frac{\theta_{xxx}}{\sqrt{c_v}}\right)(t)\right\|^2
\mathrm{d}t\leq C_8.
 \end{align}
\end{lemma}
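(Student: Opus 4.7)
Since Lemma \ref{lem_iuxxx} already bounds $\int_0^T\|(u_{xxx},v_{xxx},w_{xxx},\theta_{xxx}/\sqrt{c_v})(t)\|^2\,\mathrm{d}t$ by $1+\int_0^T\|\tau_{xx}(t)\|^2\,\mathrm{d}t$, and since differentiating \eqref{cns.a} twice in $x$ together with \eqref{r_eq} and the $L_t^\infty$ bounds on $(u,u_x,u_{xx},\tau_x,\tau_{xx})$ yields $\|\tau_{xxt}(t)\|\lesssim\|u_{xxx}(t)\|+\text{l.o.t.}$ once $\|\tau_{xx}\|$ is under control, the whole lemma reduces to establishing
$$\sup_{t\in[0,T]}\|\tau_{xx}(t)\|^2+\int_0^T\|\tau_{xx}(t)\|^2\,\mathrm{d}t\lesssim 1.$$
The initial value $\|\tau_{xx}(0)\|\lesssim\|\tau_0\|_{H^2}\lesssim 1$ is supplied by \eqref{H1}, so it suffices to derive a differential inequality for $\|\tau_{xx}\|^2$ whose forcing is integrable in time uniformly in $T$.

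To that end, I differentiate the identity \eqref{id2.2} once in $x$ to produce an equation of the form
$$\left[\frac{\nu\tau_x}{\tau}\right]_{xt}=\left(\frac{u_t}{r^m}\right)_x+P_{xx}+\mathcal{R}_x,$$
where $\mathcal{R}$ collects the remaining terms in \eqref{id2.2}, and then multiply by $[\nu\tau_x/\tau]_x$ (whose principal part is $\nu\tau_{xx}/\tau$) and integrate over $\mathcal{I}$. Expanding $P=\theta/\tau$ gives
$$P_{xx}=\frac{\theta_{xx}}{\tau}-\frac{2\theta_x\tau_x}{\tau^2}+\frac{2\theta\tau_x^2}{\tau^3}-\frac{\theta\tau_{xx}}{\tau^2},$$
and the last term, paired with $\nu\tau_{xx}/\tau$, produces the negative contribution $-\int_{\mathcal{I}}\nu\theta\tau_{xx}^2/\tau^3$, which by \eqref{E_theta} and \eqref{E_tau} dominates $c\|\tau_{xx}\|^2$ for some $c>0$. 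Combined with $\tfrac12\tfrac{d}{dt}\|[\nu\tau_x/\tau]_x\|^2\sim\tfrac12\tfrac{d}{dt}\|\tau_{xx}\|^2$ modulo already-controlled lower-order terms, this yields schematically
$$\frac{d}{dt}\|\tau_{xx}\|^2+c\|\tau_{xx}\|^2\lesssim F(t).$$

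The forcing $F(t)$ decomposes into three parts. First, linear contributions bounded by $\|u_{xt}\|^2$, $\|u_t\|^2$, $\|\theta_{xx}\|^2$, $\|\tau_{xt}\|^2$, $\|\theta_{xt}\|^2$, $\|v_x\|^2$, $\|v\|^2$ and similar, each of which is integrable in time uniformly in $T$ by Corollary \ref{cor_1}, \eqref{E_uxvx}, Lemmas \ref{lem_wx}, \ref{lem_thex}, \ref{lem_ut}, \ref{lem_thet} together with \eqref{E_1b}, so $\int_0^T$ of each is $\lesssim 1$; these are absorbed by Cauchy--Schwarz at the cost of a small fraction of the dissipation. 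Second, nonlinear products such as $\theta_x\tau_x$, $\tau_x^2$, $\theta_{xx}\tau_t$, $\theta_xu_{xx}$, and those arising from $[\nu'(\theta)(\tau_x\theta_t-\tau_t\theta_x)/\tau]_x$, handled by combining $\lrn(\theta_x,\theta_t)\rrn\lesssim 1$ from \eqref{apriori5}, the $L_t^\infty$ control of $(\tau_x,\tau_t,\tau_{xt})$ from Corollary \ref{cor_1} and \eqref{E_1b}--\eqref{E_uxx}, and Sobolev's embedding $H^1(\mathcal{I})\hookrightarrow L^\infty(\mathcal{I})$. Third, terms of the form $\tau_{xx}\cdot(\text{small})$, generated by the $\nu'(\theta)\theta_t$-contribution and the $-2\theta_x\tau_x/\tau^2$ piece of $P_{xx}$, which are absorbed directly into $c\|\tau_{xx}\|^2$ by the smallness assumption $(\gamma-1)^{1/4}M^2N^2\leq\epsilon_1$ together with the refined bound $\lrn\theta_t\rrn\lesssim(\gamma-1)^{1/2}N$ from \eqref{apriori1}. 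Applying Gronwall then delivers the $\tau_{xx}$ estimate, after which the remaining bounds in \eqref{E_tauxx} follow from Lemma \ref{lem_iuxxx} and the $\tau_{xxt}$-reduction above. The main obstacle is the careful bookkeeping ensuring the $-\theta\tau_{xx}/\tau^2$ dissipation survives against the cross terms generated by $P_{xx}$ and the $\nu'(\theta)$-pieces, which is precisely where the smallness of $\gamma-1$ is decisive.
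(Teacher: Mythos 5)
Your proposal follows essentially the same route as the paper: differentiate \eqref{id2.2} once in $x$, test against $(\nu\tau_x/\tau)_x$, extract the dissipation $\int\theta\nu\tau_{xx}^2/\tau^3$ from the pressure term, control the forcing by the earlier lemmas, and absorb the $\tau_{xx}\theta_t$-type terms via the smallness of $\gamma-1$ before invoking Lemma \ref{lem_iuxxx} for the remaining quantities in \eqref{E_tauxx}. The argument is correct, including the reduction of $\tau_{xxt}$ to $u_{xxx}$ and $\tau_{xx}$ via \eqref{cns.a}, which the paper leaves implicit.
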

\begin{proof}
 Let $\partial_x$ act on \eqref{id2.2} and multiply the resulting identity
 by $(\nu \tau_x/\tau)_x$ to find
\begin{align*}
&\left[\frac{1}{2}\left(\frac{\nu \tau_x}{\tau}\right)_x^2\right]_t
+\frac{\theta}{\nu \tau}\left[\frac{\nu \tau_x}{\tau}\right]_x^2
-\left[\frac{\nu \tau_x}{\tau}\right]_x
\left[\frac{\nu'(\theta)}{\tau}\left(\tau_x\theta_t-\tau_t\theta_x\right)\right]_x\\
&=-\left[\frac{\nu \tau_x}{\tau}\right]_x
\left[-\frac{\theta_{xx}}{\tau}-\frac{2\theta \tau_x^2}{\tau^3}
+\frac{2\theta_x\tau_x}{\tau^2}-\frac{\theta \tau_x}{\nu \tau}\left(\frac{\nu}{\tau}\right)_x\right]\\
&\quad +\left[\frac{\nu \tau_x}{\tau}\right]_x\left[
\frac{u_t}{r^{m}}-\frac{v^2}{r^{m+1}}
+\frac{2m u\mu_x}{r}\right]_x.
\end{align*}
Integrating the last identity and using Cauchy's inequality yield
\begin{align}\notag
&\left\|\left(\frac{\nu \tau_x}{\tau}\right)_x(t)\right\|^2
+\int_{0}^{t}\left\|\left(\frac{\nu \tau_x}{\tau}\right)_x\right\|^2\\ \notag
&\quad\lesssim \left\|\left.\left(\frac{\nu \tau_x}{\tau}\right)_x\right|_{t=0}\right\|^2+\int_{0}^{t}\int_{\mathcal{I}}
\left[|(\theta_{xx},\tau_x^2,\theta_x\tau_x)|^2+
|(u_{xt},u_t,v_x,v,u_x\theta_x,\theta_{xx},\theta_x)|^2\right]\\
&\quad\quad\ +
\int_{0}^{t}\int_{\mathcal{I}}\left[\left|(\tau_x\theta_{tx},\tau_t\theta_{xx})\right|^2+
|(\tau_{xx}\theta_t,\tau_{tx}\theta_x)|^2+
\left|(\tau_x\theta_t,\tau_t\theta_x)\right|^2
\left|(\tau_x,\theta_x)\right|^2\right]. \label{est4.7}
\end{align}
We only show the estimates for the last three terms on the right-hand side of \eqref{est4.7}, since the other terms can be easily treated by using Corollary \ref{cor_1} and Lemmas \ref{lem_wx}--\ref{lem_thet}.
The term
\begin{align*}
\int_{0}^{t}\int_{\mathcal{I}}
\left|(\tau_x\theta_t,\tau_t\theta_x)\right|^2
\left|(\tau_x,\theta_x)\right|^2
\end{align*}
can be estimated by using \eqref{apriori5} and \eqref{est4.4} as
\begin{align} \label{est4.8}
\int_{0}^{t}\int_{\mathcal{I}}
\left|(\tau_x\theta_t,\tau_t\theta_x)\right|^2
\left|(\tau_x,\theta_x)\right|^2
\lesssim \int_{0}^{t}\int_{\mathcal{I}}
|(\tau_x,\tau_t,\theta_x)|^4
\lesssim \epsilon\int_{0}^{t}\|\tau_{xx}\|^2+C(\epsilon).
\end{align}
It follows from \eqref{E_1a}, \eqref{E_thex}, \eqref{E_1b}, and \eqref{E_thet} that
\begin{align}\notag
&\int_{0}^{t}\int_{\mathcal{I}}
\left|(\tau_x\theta_{tx},\tau_t\theta_{xx})\right|^2
\lesssim \int_{0}^{t}\|(\tau_x,\tau_t)\|\|(\tau_{xx},\tau_{xt})\|
\|(\theta_{xt},\theta_{xx})\|^2\\  
&\quad \lesssim \epsilon\int_{0}^{t}\|(\tau_{xx},\tau_{xt})\|^2+C(\epsilon)
\int_{0}^{t}\|(\theta_{xt},\theta_{xx})\|^2
\lesssim \epsilon\int_{0}^{t}\|\tau_{xx}\|^2+C(\epsilon).\label{est4.9}
\end{align}
In view of \eqref{apriori1} and \eqref{apriori5}, we have
\begin{align}
\int_{0}^{t}\int_{\mathcal{I}}\left|(\tau_{xx}\theta_t,\tau_{tx}\theta_x)\right|^2
&\lesssim \lrn (\theta_t,\theta_x)\rrn^2 \int_{0}^{t}
\|(\tau_{xx},\tau_{xt})\|^2\lesssim 1+
(\gamma-1)^{\frac{1}{2}}  N^2\int_{0}^{t}\|\tau_{xx}\|^2. 
\label{est4.10}
\end{align}
Thanks to \eqref{est4.7}--\eqref{est4.10} and \eqref{apriori2}, we derive
\begin{align*}
\left\|\left(\frac{\nu \tau_x}{\tau}\right)_x(t)\right\|^2
+\int_{0}^{t}\left\|\left(\frac{\nu \tau_x}{\tau}\right)_x\right\|^2
\lesssim C(\epsilon)
+\left(\epsilon+ \epsilon_1^2\right)\int_{0}^{t}\|\tau_{xx}\|^2.
\end{align*}
Noting that
\begin{align*}
|\tau_{xx}|
\lesssim \left|\left(\frac{\nu \tau_x}{\tau}\right)_x\right|
+\left|\left(\theta_x\tau_x,\tau_x^2\right)\right|,
\end{align*}
 taking $\epsilon$ and $\epsilon_1$ sufficiently small, we get
\begin{align}\notag
\left\|\tau_{xx}(t)\right\|^2+\int_{0}^{t}\left\|\tau_{xx}\right\|^2
&\lesssim 1+\int_{\mathcal{I}}\tau_x^2(\theta_x^2+\tau_x^2)+
\int_{0}^{t}\int_{\mathcal{I}}\tau_x^2(\theta_x^2+\tau_x^2)\\
&\lesssim 1+\sup_{s\in[0,t]}\|\tau_x(s)\|^2_{L^{\infty}}
\lesssim 1+\sup_{s\in[0,t]}\|\tau_{xx}(s)\|. \label{est4.11}
\end{align}
Apply Cauchy's inequality to \eqref{est4.11} to get
\begin{align*}
\sup_{t\in[0,T]}\left\|\tau_{xx}(t)\right\|^2
+\int_{0}^{T}\left\|\tau_{xx}\right\|^2\lesssim 1,
\end{align*}
which combined with \eqref{E_iuxxx} gives \eqref{E_tauxx}.
\end{proof}

\subsection{Estimates on the third-order derivatives}
\label{subsec5}
This subsection is devoted to deriving the estimates for $(\tau_{xxx},u_{xxx},v_{xxx},w_{xxx},\theta_{xxx})$.  
To do this, we first make the estimates for $(u_{xt},v_{xt},w_{xt})$ in the following lemma. 
\begin{lemma} \label{lem_uxt}
If the conditions listed in Lemma \ref{lem_bas} hold for a sufficiently small $\epsilon_1$, then
 \begin{align}\label{E_uxt}
 &\sup_{t\in[0,T]}\|(u_{xt},v_{xt},w_{xt})(t)\|^2 +\int_{0}^{T}
 \|(u_{xxt},v_{xxt},w_{xxt})(t)\|^2\mathrm{d}t\lesssim 1,\\ \label{E_uxxx}
 &\sup_{t\in[0,T]}\|(u_{xxx},\tau_{tt},\tau_{xxt},v_{xxx},w_{xxx})(t)\|\leq C_9.
 \end{align}
\end{lemma}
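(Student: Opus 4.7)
The plan is to apply $\partial_t$ to each of the momentum equations \eqref{cns.b}, \eqref{cns.c}, \eqref{cns.d} and to test the results against $u_{tt}$, $v_{tt}$, $w_{tt}$ respectively. Because $(u,v,w)|_{\partial\mathcal{I}}=0$ holds for all $t$, the compatible boundary conditions give $(u_t,v_t,w_t)|_{\partial\mathcal{I}}=0$ and in turn $(u_{tt},v_{tt},w_{tt})|_{\partial\mathcal{I}}=0$, so integration by parts in $x$ produces no boundary contributions. Differentiating \eqref{cns.b} in $t$ produces a parabolic identity for $u_t$ whose principal part is $r^m[\nu(r^m u)_x/\tau]_{xt}$. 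Multiplying by $u_{tt}$, integrating by parts, and extracting the leading component $\nu r^m u_{xt}/\tau$ from $[\nu(r^m u)_x/\tau]_t$, the principal term yields
\begin{equation*}
\int_{\mathcal{I}} u_{tt}\, r^m \left[\frac{\nu(r^m u)_x}{\tau}\right]_{xt}\mathrm{d}x
= -\frac{1}{2}\frac{\mathrm{d}}{\mathrm{d}t}\int_{\mathcal{I}} \frac{\nu r^{2m}}{\tau} u_{xt}^2\,\mathrm{d}x + \mathcal{R}_u,
\end{equation*}
where $\mathcal{R}_u$ collects products in $\tau_t,\theta_t, u,u_x,u_{xt}$ that are either pointwise or $L^2_t$ controlled by Corollary~\ref{cor_1} and Lemmas~\ref{lem_wx}--\ref{lem_tauxx}. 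Combining with the remaining inhomogeneities (arising from $\partial_t[v^2/r]$, $\partial_t[r^m P_x]$, $\partial_t[r^{m-1}u\mu_x]$ and the $(r^m)_t$ contribution) produces, after Cauchy's inequality,
\begin{equation*}
\frac{\mathrm{d}}{\mathrm{d}t}\int_{\mathcal{I}} \frac{\nu r^{2m}}{2\tau} u_{xt}^2 + \|u_{tt}\|^2 \leq \epsilon \|u_{xxt}\|^2 + C(\epsilon)\,\mathcal{G}(t),
\end{equation*}
with $\mathcal{G}\in L^1(0,T)$ by the estimates already in hand.

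To convert the $\|u_{tt}\|^2$-dissipation into the $\|u_{xxt}\|^2$-dissipation claimed in \eqref{E_uxt}, I read the $\partial_t$-differentiated \eqref{cns.b} as the elliptic identity
\begin{equation*}
\frac{\nu r^{2m}}{\tau}\,u_{xxt} = u_{tt} - (\text{terms in } u_{xt},\,u_t,\,\tau_{xt},\,\theta_{xt},\,v_t,\,\ldots),
\end{equation*}
so that $\|u_{xxt}\|^2 \lesssim \|u_{tt}\|^2 + \|\mathrm{l.o.t.}\|^2$, and integrating in $t$ absorbs the $\epsilon\|u_{xxt}\|^2$ term. The initial value $\|u_{xt}(0)\|$ is finite: evaluating \eqref{cns.b} at $t=0$ expresses $u_t(0)$ in terms of at most two spatial derivatives of $(\tau_0,u_0,v_0,w_0,\theta_0)$, all of which are controlled via \eqref{H1}, and likewise $u_{xt}(0)$. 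The analogous arguments applied to \eqref{cns.c} and \eqref{cns.d} give the $v$- and $w$-pieces of \eqref{E_uxt}; the $w$-case is simplest, lacking the quadratic centrifugal contributions that appear for $v$.

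Given \eqref{E_uxt}, the estimate \eqref{E_uxxx} is purely algebraic. Reading \eqref{cns.b} as an elliptic identity for $u_{xxx}$ yields exactly the pointwise bound \eqref{est_uxxx}, and every right-hand-side term is already in $L^\infty(0,T;L^2)$ by Corollary~\ref{cor_1}, Lemmas~\ref{lem_wx}--\ref{lem_tauxx}, and the $\|u_{xt}\|$-bound just obtained; the same manipulation with \eqref{cns.c}, \eqref{cns.d} controls $\|v_{xxx}\|$, $\|w_{xxx}\|$. Finally, differentiating the mass law $\tau_t=(r^m u)_x$ gives $\tau_{tt}=(r^m u)_{xt}$ and $\tau_{xxt}=(r^m u)_{xxx}$, whose $L^2_x$-norms reduce (using $r_t=u$ and the preceding bounds) to $\|u_{xt}\|$ and $\|u_{xxx}\|$ plus already-controlled quantities.

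The main obstacle is the bookkeeping of the many cross terms in $\mathcal{R}_u$ and in its $v$, $w$ analogues. These include mixed products such as $\tau_t\theta_x$, $\tau_{xt}u$, $v v_t/r$, $(r^m)_t[\nu(r^m u)_x/\tau]_x$, plus quadratic terms in $\theta_t$ whose control needs the smallness of $\gamma-1$ via \eqref{apriori2}. Where cubic expressions arise (from the $(v^2/r)_t$ and the $\mathcal{Q}$-type contributions that enter indirectly through $\theta_{xt}$), I intend to apply the Sobolev interpolation $\|f\|_{L^\infty}^2\leq\|f\|\|f\|_1$ as in \eqref{est4.3}--\eqref{est4.5} to reduce them to products of already-bounded $L^\infty_t L^2_x$ and $L^2_t H^1_x$ norms. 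All $r^m$-factors and their derivatives are handled uniformly using \eqref{E_r} and \eqref{E_tau}.
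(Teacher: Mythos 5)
Your overall strategy is viable and genuinely different from the paper's at the key step: the paper applies $\partial_t$ to \eqref{cns.b}--\eqref{cns.d} and tests against $u_{xxt}$, $v_{xxt}$, $w_{xxt}$ directly, which produces the energy $\tfrac12\|u_{xt}\|^2$ and the dissipation $\int \nu r^{2m}u_{xxt}^2/\tau$ in one stroke, with every remaining term landing in a remainder $R_u$ multiplied by $u_{xxt}$ and absorbed by Cauchy's inequality (see \eqref{id5.1}--\eqref{est5.1}). You instead test against $u_{tt}$ and obtain $\|u_{tt}\|^2$-dissipation, then recover $\int\|u_{xxt}\|^2$ through the elliptic reading of the differentiated equation. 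That detour is legitimate and even yields $\int_0^T\|u_{tt}\|^2$ as a byproduct, but it costs an extra absorption step; the paper's multiplier is the more economical choice here. Your treatment of \eqref{E_uxxx} (elliptic identity \eqref{est_uxxx} plus $\tau_{tt}=(r^mu)_{xt}$, $\tau_{xxt}=(r^mu)_{xxx}$) coincides with the paper's.

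One step as written would fail and needs repair. After integrating by parts you face $-\int_{\mathcal{I}}(r^m u_{tt})_x\bigl[\nu(r^mu)_x/\tau\bigr]_t$, and if you only then ``extract the leading component $\nu r^m u_{xt}/\tau$,'' the non-principal part
\begin{equation*}
G:=\left[\frac{\nu}{\tau}\right]_t(r^mu)_x+\frac{\nu}{\tau}\left[(r^mu)_{xt}-r^mu_{xt}\right]
\end{equation*}
gets paired with $r^m u_{xtt}$, which is controlled nowhere at this stage (and integrating by parts in $t$ to remove it brings in $\theta_{tt}$ through $[\nu/\tau]_{tt}$, which is only accessible after Lemma \ref{lem_thext} — a circularity). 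So your $\mathcal{R}_u$ cannot consist only of ``products in $\tau_t,\theta_t,u,u_x,u_{xt}$.'' The fix is to split \emph{before} integrating by parts: write the viscous term as $r^m\bigl[\nu r^mu_{xt}/\tau\bigr]_x+r^m[G]_x$, integrate by parts only the principal piece (this is where $u|_{\partial\mathcal{I}}=0\Rightarrow u_{tt}|_{\partial\mathcal{I}}=0$ is used), and keep $r^m[G]_x$ paired with $u_{tt}$ so that it is absorbed into the $\|u_{tt}\|^2$-dissipation via Cauchy's inequality; note that $[G]_x$ involves $\theta_{xt}$, $\tau_{xt}$, $u_{xx}$, $u_{xt}$, which are all in $L^2(0,T;L^2(\mathcal{I}))$ by \eqref{E_1b}, \eqref{E_uxx}, \eqref{E_thet}. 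With that correction the argument closes as you describe.
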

\begin{proof}
 Letting $\partial_t$ act on \eqref{cns.b} and multiplying the resulting identity by $u_{xxt}$ yields
 \begin{align}\label{id5.1}
 \left[\frac{1}{2}u_{xt}^2\right]_t-(u_{xt}u_{tt})_x+\frac{\nu r^{2m}u_{xxt}^2}{\tau}=u_{xxt}R_u
 \end{align}
 with 
 \begin{align*}
 R_u:=~&-\left(\frac{v^2}{r}\right)_t+\left(r^mP_x\right)_t+2m\left(r^{m-1}u\mu_x\right)_t-(r^m)_t\left[\frac{\nu (r^m u)_x}{\tau}\right]_x\\
 &-r^m\left[\left(\frac{\nu}{\tau}\right)_x(r^mu)_x\right]_t-r^m\left(\frac{\nu}{\tau}\right)_t(r^mu)_{xx}-\frac{r^m\nu}{\tau}\left[(r^mu)_{xxt}-r^m u_{xxt}\right].
 \end{align*}
 Thanks to \eqref{bdy}, we integrate \eqref{id5.1} and use Cauchy's inequality to have
 \begin{align}\label{est5.1}
 \|u_{xt}(t)\|^2+\int_{0}^{t}\|u_{xxt}\|^2\lesssim 
 \|u_{xt}|_{t=0}\|^2
 +\int_{0}^{t}\int_{\mathcal{I}}R_u^2.
 \end{align}
 By virtue of the chain rule, \eqref{r_eq}, and \eqref{cns.b}, for a general smooth function $f(\tau,\theta)$, we have
 \begin{align} \notag
 f(\tau,\theta)_{xt} \lesssim ~&|(\tau_{xt},\theta_{xt})|+|(\tau_x,\tau_t,\theta_x,\theta_t)|^2,\\ \notag
 \left|(r^mu)_{xxt}-r^m u_{xxt}\right| \lesssim ~&
 \left|\left((r^m)_{xxt}u,(r^m)_{xt}u_x,(r^m)_{xx}u_t,(r^m)_{x}u_{xt}, (r^m)_{t}u_{xx}\right)\right|\\
 \lesssim ~&\left|\left(u_{xx},u_x,mu,\tau_x\right)\right| +\left|\left(u_x^2,u_x\right)\right|+\left|\left(\tau_x u_t,u_t\right)\right|+\left|\left(u_{xt},u_{xx}\right)\right|, \notag
 \end{align}
 and
 \begin{align} \notag
 |R_u| \lesssim ~& |(v_t,v)|+|(\theta_x,\tau_x)|+|(\tau_{xt},\theta_{xt})|+
 |(\tau_x,\tau_t,\theta_x,\theta_t)|^2+|u_t\theta_x|\\ \notag
 &+ |(u_t,v,\theta_x,\tau_x)|+|(\theta_x,\tau_x)\tau_{tt}|
 +|(\tau_{xt},\theta_{xt})||\tau_t|+
 |(\tau_x,\tau_t,\theta_x,\theta_t)|^2|\tau_t|\\ \notag
 &+|(\theta_t,\tau_t)\tau_{xt}|+
 \left|\left(u_{xx},u_x,mu,\tau_x,u_x^2,\tau_x u_t,u_t,u_{xt}\right)\right|.
 \end{align}
 Applying Sobolev's inequality, we get from
 \eqref{E_1a}, \eqref{E_wx}, \eqref{E_uxvx}, \eqref{E_1b}, \eqref{E_uxx}, and \eqref{E_tauxx} that
 \begin{align} \label{est_taux}
 \lrn (\tau_t,\tau_x,u_x,v_x,w_x)\rrn\lesssim 1.
 \end{align}
Using \eqref{apriori5}, \eqref{est_taux}, Corollary \ref{cor_1}, and Lemmas \ref{lem_wx}--\ref{lem_tauxx}, we derive
\begin{align*}
\int_{0}^{t}\int_{\mathcal{I}}R_u^2\lesssim 1.
\end{align*}
Insert the last estimate into \eqref{est5.1} to get
\begin{align*}
 \|u_{xt}(t)\|^2+\int_{0}^{t}\|u_{xxt}\|^2\lesssim 1.
\end{align*} 
 The estimates for $v$ and $w$ in \eqref{E_uxt} can be obtained in a similar way. 

 We next show the estimate for $u_{xxx}$ in \eqref{E_uxxx}.
 In light of \eqref{est_uxxx}, we have
   \begin{align}
   \notag
   \|u_{xxx}(t)\|^2
   \lesssim 1+\|\theta_{xx}\tau_t\|^2+\|\tau_{xx}\tau_t\|^2
   +\|\tau_{tx}\tau_x\|^2\lesssim  1.
   \end{align}
The other estimates in \eqref{E_uxxx} can be proved similarly by using the system \eqref{cns}.  
The proof of this lemma is completed.
\end{proof}

In the following lemma, we deduce the $L^{\infty}(0,T;L^2(\mathcal{I}))$-norm for $\sqrt{c_v}\theta_{xt}$,
and we can have the bound for $\theta_{xxx}/\sqrt{c_v}$ in $L^{\infty}(0,T;L^2(\mathcal{I}))$ due to the equation \eqref{cns.e}. 
\begin{lemma} \label{lem_thext}
If the conditions listed in Lemma \ref{lem_bas} hold for a sufficiently small $\epsilon_1$, then
 \begin{equation} \label{E_thext}
 \sup_{t\in[0,T]}\left\|\left(\sqrt{c_v}\theta_{xt},\frac{\theta_{xxx}}{\sqrt{c_v}}\right)(t)
 \right\|^2 +\int_{0}^{T}
 \|\theta_{xxt}(t)\|^2\mathrm{d}t\leq C_{10}.
 \end{equation}
\end{lemma}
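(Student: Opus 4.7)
The plan is to parallel the argument of Lemma \ref{lem_thet}, lifted one derivative higher in both the multiplier and the dissipation. Applying $\partial_t$ to the energy equation \eqref{cns.e} yields
\begin{equation*}
c_v\theta_{tt}+\bigl(P(r^mu)_x\bigr)_t=\left[\frac{\kappa r^{2m}\theta_x}{\tau}\right]_{xt}+\mathcal{Q}_t.
\end{equation*}
I would multiply this identity by $\theta_{xxt}$ and integrate over $\mathcal{I}$. Since $\theta_x|_{\partial\mathcal{I}}=0$ forces $\theta_{xt}|_{\partial\mathcal{I}}=0$, integration by parts gives $\int_{\mathcal{I}}c_v\theta_{tt}\theta_{xxt}=-\tfrac{c_v}{2}\tfrac{\mathrm{d}}{\mathrm{d}t}\|\theta_{xt}\|^2$, while the leading summand $(\kappa r^{2m}/\tau)\theta_{xxt}$ of $[(\kappa r^{2m}/\tau)\theta_x]_{xt}$ furnishes the dissipation $\int_{\mathcal{I}}(\kappa r^{2m}/\tau)\theta_{xxt}^2$. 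Rearranging (and absorbing one $\epsilon\|\theta_{xxt}\|^2$ into the dissipation) produces
\begin{equation*}
\frac{\mathrm{d}}{\mathrm{d}t}\|\sqrt{c_v}\theta_{xt}\|^2+c\|\theta_{xxt}\|^2\lesssim R,
\end{equation*}
where the remainder $R$ collects $\|(P(r^mu)_x)_t\|^2$, $\|\mathcal{Q}_t\|^2$, and the three commutator contributions $\|A_{xt}\theta_x\|^2$, $\|A_t\theta_{xx}\|^2$, $\|A_x\theta_{xt}\|^2$ with $A:=\kappa r^{2m}/\tau$.

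Next I would bound $R$ using the estimates already in hand. The $L_x^\infty$ bound \eqref{est_taux} on $(\tau_t,\tau_x,u_x,v_x,w_x)$, together with the $L_t^\infty L_x^2$ bounds on $(\theta_{xx},\tau_{xx},u_{xt},v_{xt},w_{xt})$ from Lemmas \ref{lem_thet}--\ref{lem_uxt} and the $L_t^2 L_x^2$ bounds on $(\tau_{xxt},u_{xxt},v_{xxt},w_{xxt},\theta_{xt})$, reduce every quadratic term in $R$ to a time-integrable quantity; the cubic/quartic nonlinearities of first derivatives appearing in $\mathcal{Q}_t$ are handled by Sobolev interpolation, exactly as in \eqref{est4.3}--\eqref{est4.5}. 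Integrating in time, together with the initial bound $\|\sqrt{c_v}\theta_{xt}(0)\|\lesssim 1$ — which follows from \eqref{initial_thext} because the $1/c_v$ prefactor there compensates the third-order spatial derivative of $\theta_0$ produced by differentiating $[\kappa r^{2m}\theta_x/\tau]_x$ in $x$, and $\|\theta_{0xxx}\|$ is controlled by \eqref{H1} — I then obtain the bound on $\sqrt{c_v}\theta_{xt}$ in $L^\infty(0,T;L^2)$ and on $\theta_{xxt}$ in $L^2(0,T;L^2)$.

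The bound on $\theta_{xxx}/\sqrt{c_v}$ in $L_t^\infty L_x^2$ follows algebraically from \eqref{cns.e}, just as in the derivation of \eqref{est_thexxx}. Dividing that relation by $\sqrt{c_v}$ gives
\begin{equation*}
\frac{|\theta_{xxx}|}{\sqrt{c_v}}\lesssim\bigl|\sqrt{c_v}\theta_{xt}\bigr|+\tfrac{1}{\sqrt{c_v}}\bigl|\bigl(\theta_x\tau_t,\tau_x\tau_t,\tau_{xt},\theta_{xx},\tau_{xx},\tau_x^2,\theta_x,\tau_x,\tau_x\theta_{xx},\mathcal{Q}_x\bigr)\bigr|.
\end{equation*}
Because $1/\sqrt{c_v}=\sqrt{\gamma-1}\lesssim 1$ and every entry on the right has already been bounded in $L_t^\infty L_x^2$ by Corollary \ref{cor_1} and Lemmas \ref{lem_wx}--\ref{lem_uxt}, together with the $L_t^\infty L_x^2$ bound on $\sqrt{c_v}\theta_{xt}$ just obtained, this completes the proof. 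I expect the main obstacle to be the bookkeeping of $\mathcal{Q}_t$ and the commutator $A_{xt}\theta_x$: both contain products of three or more derivative quantities that must be split between the dissipation $\|\theta_{xxt}\|^2$ (via Cauchy) and the already-controlled $L_t^2$ quantities (via Sobolev), while making sure that no term produces a factor carrying an uncompensated positive power of $c_v$.
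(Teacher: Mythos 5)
Your proposal is correct and follows essentially the same route as the paper: differentiate \eqref{cns.e} in $t$, test with $\theta_{xxt}$ using $\theta_{xt}|_{\partial\mathcal{I}}=0$, control the remainder $(P(r^mu)_x)_t$, $\mathcal{Q}_t$ and the commutator $\bigl[\kappa r^{2m}\theta_x/\tau\bigr]_{xt}-\kappa r^{2m}\theta_{xxt}/\tau$ by the bounds of Corollary \ref{cor_1} and Lemmas \ref{lem_wx}--\ref{lem_uxt}, bound $\|\sqrt{c_v}\theta_{xt}(0)\|$ through the equation and \eqref{H1}, and recover $\theta_{xxx}/\sqrt{c_v}$ algebraically from \eqref{est_thexxx}. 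No gaps.
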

\begin{proof}
 Let $\partial_t$ act on \eqref{cns.e} and multiply the resulting identity by $\theta_{xxt}$ to get
 \begin{align}
 \left[\frac{c_v}{2}\theta_{xt}^2\right]_t
 -\left(c_v\theta_{tt}\theta_{xt}\right)_x+\frac{\kappa r^{2m}\theta_{xxt}^2}{\tau}= \theta_{xxt}R_{\theta}, \label{id5.2}
 \end{align}
 with 
 \begin{align*}
 R_{\theta}:=\left[P(r^m u)_x\right]_t
 -\left[\left(\frac{\kappa r^{2m}\theta_x}{\tau}\right)_{xt}
 -\frac{\kappa r^{2m}\theta_{xxt}}{\tau}\right]-\mathcal{Q}_t.
 \end{align*}
 Integrating the last identity and using the boundary condition for $\theta$ yield
 \begin{align}
 \|\sqrt{c_v}\theta_{xt}(t)\|^2+\int_{0}^{t}\|\theta_{xxt}(s)\|^2\mathrm{d}s
 \lesssim  \|\sqrt{c_v}\theta_{xt}|_{t=0}\|^2
 +\int_{0}^{t}\int_{\mathcal{I}}R_{\theta}^2. \label{est_thext}
 \end{align}
By virtue of \eqref{est_thext1} and \eqref{est_Qx2}, we have
\begin{align*}
\|\sqrt{c_v}\theta_{xt}|_{t=0}\|^2\lesssim 1.
\end{align*}
To estimate the last term on the right, we compute
\begin{align} \label{est5.2}
|(P(r^m u)_x)_t|\lesssim~& |\tau_t(\theta_t,\tau_t)|+|\tau_{tt}|,\\
|\mathcal{Q}_t|\lesssim~& |(\theta_t,\tau_t)\tau_t^2|+|\tau_t\tau_{tt}|+
|\theta_t(r^{m-1}u^2)_x|+|(r^{m-1}u^2)_{xt}|+|w_xw_{xt}|\notag\\
&+ \left|\left(\frac{\mu r^{2m}}{\tau}\right)_tw_x^2\right|
+|(\theta_t,\tau_t)||(v_x,v)|^2+|(v_x,v)||(v_{xt},v_x,v_t,v)|,\label{est5.3}
\end{align}
and
 \begin{align}\notag
 &\left|\left[\left(\frac{\kappa r^{2m}\theta_x}{\tau}\right)_{xt}
 -\frac{\kappa r^{2m}\theta_{xxt}}{\tau}\right]\right|\\[1mm]
&\lesssim  |(\theta_x,\tau_x,1)\theta_{xt}|+ |(\theta_t,\tau_t,1)\theta_{xx}|+
 |\theta_x||(mu,u_x)|\notag \\[1mm]
 &\quad +|\theta_x|\left[|(\theta_x,\tau_x)|+|(\theta_t,\tau_t)|
 +|(\theta_{xt},\tau_{xt})|+
 |(\theta_x,\theta_t,\tau_x,\tau_t)|^2\right]. \label{est5.4}
 \end{align}
 Plug \eqref{est5.2}--\eqref{est5.4} into \eqref{est_thext} and use 
 \eqref{apriori5}, \eqref{E_uvw}, \eqref{est_taux}, Corollary \ref{cor_1},  and Lemmas \ref{lem_wx}--\ref{lem_uxt} to conclude 
  \begin{equation} \label{E_thext1}
  \sup_{t\in[0,T]}\left\|\sqrt{c_v}\theta_{xt}(t)
  \right\|^2 +\int_{0}^{T}
  \|\theta_{xxt}(t)\|^2\mathrm{d}t\lesssim 1.
  \end{equation}
The estimate \eqref{E_thext} follows by using \eqref{est_thexxx}, \eqref{est_Qx2} and \eqref{E_thext1}.
\end{proof}

By using the system \eqref{cns}, we can get the following estimates for $(u_{xxxx},v_{xxxx},w_{xxxx},\theta_{xxxx})$.
The proof is similar to that of Lemma \ref{lem_iuxxx} and hence we omit the details for brevity.
\begin{lemma} If the conditions listed in Lemma \ref{lem_bas} hold for a sufficiently small $\epsilon_1$, then
 \begin{equation} \label{E_iuxxxx}
 \int_{0}^{T}
 \left\|\left(u_{xxxx},v_{xxxx},w_{xxxx},
 \frac{\theta_{xxxx}}{c_v}\right)(t)\right\|^2
 \mathrm{d}t\lesssim 1
 +\int_{0}^{T}\|\tau_{xxx}(t)\|^2\mathrm{d}t.
 \end{equation}
\end{lemma}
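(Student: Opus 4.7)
The plan is to mirror the argument of Lemma \ref{lem_iuxxx} with $\partial_x^2$ in place of $\partial_x$. Applying $\partial_x^2$ to \eqref{cns.b}--\eqref{cns.d} and then solving algebraically for the top-order spatial derivative, and using that the coefficients $\nu r^{2m}/\tau$ and $\mu r^{2m}/\tau$ are uniformly bounded above and away from zero by \eqref{E_theta} and Lemma \ref{lem_kanel}, I obtain pointwise bounds of the schematic form
\begin{align*}
|(u_{xxxx},v_{xxxx},w_{xxxx})| \lesssim |(u_{xxt},v_{xxt},w_{xxt})| + |\tau_{xxx}| + L,
\end{align*}
where $L$ collects terms depending only on $x$-derivatives of $(\tau,u,v,w,\theta)$ of order at most three, mixed derivatives such as $(\tau_{xt},\tau_{xxt},\theta_{xt},\theta_{xxt},u_{xt},v_{xt},w_{xt})$, and polynomial products thereof. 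For the temperature, differentiating \eqref{cns.e} twice in $x$ and isolating $\kappa r^{2m}\theta_{xxxx}/\tau$ gives, after division by $c_v$,
\begin{align*}
\frac{|\theta_{xxxx}|}{c_v} \lesssim |\theta_{xxt}| + |\tau_{xxx}|\,|\theta_x| + \frac{|\mathcal{Q}_{xx}|}{c_v} + \tilde L,
\end{align*}
with $\tilde L$ a lower-order remainder of the same type as $L$; the smallness $\|\theta_x\|_{L^\infty}\lesssim (\gamma-1)^{1/4}$ from \eqref{apriori1} makes the prefactor on $|\tau_{xxx}|$ harmless.

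Squaring both inequalities, integrating over $(0,T)\times\mathcal{I}$, and invoking the $L^2_tL^2_x$ bounds for $(u_{xxt},v_{xxt},w_{xxt})$ from Lemma \ref{lem_uxt}, for $\theta_{xxt}$ from Lemma \ref{lem_thext}, and for $\tau_{xxt}$ from Lemma \ref{lem_tauxx}, together with the $L^\infty_tL^2_x$ bounds on $(\tau_{xx},u_{xxx},v_{xxx},w_{xxx},\theta_{xxx},\sqrt{c_v}\theta_{xt})$ from Lemmas \ref{lem_tauxx}--\ref{lem_thext}, the uniform $L^\infty$ estimates \eqref{est_taux} and \eqref{apriori5}, and Sobolev's inequality $\|f\|_{L^\infty(\mathcal{I})}\lesssim \|f\|^{1/2}\|f_x\|^{1/2}+\|f\|$ to interpolate the cubic and quartic products, every contribution within $L$ and $\tilde L$ is absorbed into a universal constant. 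Only $\int_0^T\|\tau_{xxx}\|^2\,\mathrm{d}t$ remains on the right-hand side, yielding \eqref{E_iuxxxx}.

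The main obstacle is the careful bookkeeping of $(P(r^m u)_x)_{xx}$ and, especially, of $\mathcal{Q}_{xx}$: since $\mathcal{Q}$ is already quadratic in gradients of the velocity, its second $x$-derivative generates cubic and quartic combinations such as $\tau_x\tau_{xxt}$, $w_xw_{xxx}$, $w_{xx}^2$, $\theta_x\tau_{xxx}\tau_t$, and $v_x(r^m v_x/\tau-v/r^m)_{xx}$, each of which must be interpolated into $L^2(\mathcal{I})$ by the Sobolev inequality and then integrated in time against the bounds already established in the preceding subsections. This is entirely routine but lengthy, and it is exactly the reason the authors invoke the analogy with Lemma \ref{lem_iuxxx} rather than reproducing the computation.
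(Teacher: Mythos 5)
Your proposal is correct and coincides with the paper's intended argument: the paper omits the proof entirely, remarking only that it is ``similar to that of Lemma \ref{lem_iuxxx}'', namely apply $\partial_x^2$ to the equations, solve for the top-order spatial derivatives using the uniform bounds on the elliptic coefficients, and absorb the remainder via the mixed-derivative bounds of Lemmas \ref{lem_uxt} and \ref{lem_thext} together with the earlier estimates. Your identification of $\theta_{xxt}$ (rather than $\sqrt{c_v}\,\theta_{xxt}$) as the source of the $c_v^{-1}$ weight on $\theta_{xxxx}$, and of $\int_0^T\|\tau_{xxx}\|^2$ as the only term that cannot yet be closed, matches exactly why the lemma is stated in this conditional form before Lemma \ref{lem_tauxxx}.
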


The following lemma is to establish the bound for $L_x^2$-norm of $\tau_{xx}(t,x)$. We note that, unlike the above estimates, the bound in \eqref{E_tauxxx} depends on $\gamma-1$. 
\begin{lemma} \label{lem_tauxxx}
 If the conditions listed in Lemma \ref{lem_bas} hold for a sufficiently small $\epsilon_1$, then
  \begin{align} \label{E_tauxxx}
  \sup_{t\in[0,T]}\left\|\tau_{xxx}(t)\right\|^2 +
  \int_{0}^{T}
  \left\|\left(\tau_{xxx},u_{xxxx},v_{xxxx},w_{xxxx},
  \frac{\theta_{xxxx}}{\sqrt{c_v}}\right)(t)\right\|^2
  \mathrm{d}t\lesssim c_v.
  \end{align}
\end{lemma}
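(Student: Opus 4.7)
The plan is to mirror the argument of Lemma \ref{lem_tauxx} one derivative higher. Start from the identity \eqref{id2.2}, which we rewrite as
\begin{equation*}
\left[\frac{\nu \tau_x}{\tau}\right]_t
=\frac{u_t}{r^{m}}+P_x-\frac{v^2}{r^{m+1}}+\frac{2m u\mu_x}{r}+\frac{\nu'(\theta)}{\tau}\left(\tau_x\theta_t-\tau_t\theta_x\right),
\end{equation*}
apply $\partial_x^2$ to this identity, multiply the result by $\left[\nu\tau_x/\tau\right]_{xx}$, and integrate over $\mathcal{I}$. As in Lemma \ref{lem_tauxx}, the diffusive structure yields, after integration by parts in $x$ on the terms that are themselves $\partial_x$ of something,
\begin{equation*}
\frac{1}{2}\frac{\mathrm{d}}{\mathrm{d}t}\left\|\left[\frac{\nu \tau_x}{\tau}\right]_{xx}(t)\right\|^2
+c\left\|\left[\frac{\nu \tau_x}{\tau}\right]_{xx}(t)\right\|^2
\le \text{(RHS)}.
\end{equation*}
The coercive term comes from the $P_x$ contribution exactly as in \eqref{est4.7}, using $\theta\ge \tfrac12$ and $\tau\ge C_1^{-1}$.

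The right-hand side splits into three kinds of pieces. The first kind consists of terms already present in the second-order analysis but with an extra $x$-derivative; these are controlled by the bounds of Corollary \ref{cor_1} and Lemmas \ref{lem_wx}--\ref{lem_thet}, \ref{lem_uxt}--\ref{lem_thext}, together with $\lrn(\tau_t,\tau_x,u_x,v_x,w_x)\rrn\lesssim 1$ from \eqref{est_taux}. The second kind consists of ``small-$\gamma{-}1$'' terms of the form $\lrn (\theta_t,\theta_x)\rrn^2 \int_0^t\|\tau_{xxx}\|^2$ or $\int_0^t\|\tau_{xxx}\|\|\tau_{xxt}\|(\cdots)$, which, exactly as in \eqref{est4.9}--\eqref{est4.10}, are absorbed by $\epsilon\int_0^t\|\tau_{xxx}\|^2$ once \eqref{apriori2} is imposed with $\epsilon_1$ sufficiently small. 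The third kind is the genuinely new contribution: terms like $\theta_{xxx}$, $P_{xxx}$, and the third-derivative terms inside $\nu'(\theta)(\tau_x\theta_t-\tau_t\theta_x)/\tau$ differentiated twice. Lemma \ref{lem_thext} gives $\|\theta_{xxx}/\sqrt{c_v}\|^2\lesssim 1$ and $\int_0^T\|\theta_{xxt}\|^2\lesssim 1$, so each such term contributes at most $O(c_v)$ to the right-hand side. Using Cauchy's inequality we obtain
\begin{equation*}
\frac{\mathrm{d}}{\mathrm{d}t}\left\|\left[\frac{\nu \tau_x}{\tau}\right]_{xx}\right\|^2
+\left\|\left[\frac{\nu \tau_x}{\tau}\right]_{xx}\right\|^2
\lesssim c_v + (\epsilon+\epsilon_1^2)\,\|\tau_{xxx}\|^2 + \mathcal{G}(t),
\end{equation*}
with $\int_0^T\mathcal{G}\lesssim c_v$.

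Next I would convert this into a bound on $\tau_{xxx}$ itself via the chain-rule identity
\begin{equation*}
\left[\frac{\nu\tau_x}{\tau}\right]_{xx}
=\frac{\nu}{\tau}\tau_{xxx}+\Xi(\tau,\theta,\tau_x,\tau_{xx},\theta_x,\theta_{xx}),
\end{equation*}
where $\Xi$ is a polynomial in its arguments controlled in $L^\infty_tL^2_x$ by Corollary \ref{cor_1} and Lemmas \ref{lem_thex}, \ref{lem_tauxx}, \ref{lem_thext}; note that $\|\theta_{xx}\|$ and $\|\tau_{xx}\|$ are uniformly bounded, and $\|\theta_{xx}^2\|_{L^1}\lesssim\|\theta_{xx}\|_{L^\infty}\|\theta_{xx}\|\lesssim\|\theta_{xxx}\|^{1/2}\|\theta_{xx}\|^{3/2}\lesssim c_v^{1/4}$. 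After choosing $\epsilon,\epsilon_1$ small and applying Gronwall, this yields
\begin{equation*}
\sup_{t\in[0,T]}\|\tau_{xxx}(t)\|^2+\int_0^T\|\tau_{xxx}(t)\|^2\mathrm{d}t\lesssim c_v.
\end{equation*}
Finally, substituting the last bound into \eqref{E_iuxxxx} produces the claimed space-time bounds for $(u_{xxxx},v_{xxxx},w_{xxxx},\theta_{xxxx}/\sqrt{c_v})$, which completes the proof of \eqref{E_tauxxx}.

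The main obstacle I anticipate is bookkeeping in the third kind of right-hand side terms: keeping careful track of which factors of $\theta$ and its derivatives produce $c_v$-powers after applying the available bounds, and confirming that no term produces worse than $c_v$. In particular, cross terms such as $\nu''(\theta)\theta_x\theta_t\tau_x/\tau$, when differentiated twice in $x$, generate products like $\theta_{xxx}\tau_t\theta_x$ whose $L^2_{t,x}$ size must be shown to be $O(c_v)$; the Sobolev interpolation in the style of \eqref{est4.3}--\eqref{est4.5}, combined with the $c_v$-weighted norms from Lemmas \ref{lem_thet} and \ref{lem_thext}, is what delivers this.
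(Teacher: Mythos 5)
Your proposal follows essentially the same route as the paper: apply $\partial_x^2$ to \eqref{id2.2}, test with $\left[\nu\tau_x/\tau\right]_{xx}$ to exploit the damping coming from $P_x$, absorb the $\tau_{xxx}$-terms carrying factors $\lrn(\theta_t,\theta_x)\rrn$ via the smallness \eqref{apriori2}, trace the single factor of $c_v$ to the $\theta_{xxx}$ (and $\theta_{xxt}$) contributions, recover $\tau_{xxx}$ from $\left[\nu\tau_x/\tau\right]_{xx}$ by the chain rule, and feed the result into \eqref{E_iuxxxx}. One small caution: for the $O(c_v)$ bookkeeping you need the \emph{time-integrated} bound $\int_0^T\|\theta_{xxx}/\sqrt{c_v}\|^2\lesssim 1$ coming from \eqref{E_iuxxx}--\eqref{E_tauxx} (the pointwise-in-time bound of Lemma \ref{lem_thext} alone would give $c_vT$), but this is available in the lemmas you cite and does not affect the argument.
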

\begin{proof}
Let $\partial_x^2$ act on \eqref{id2.2} and multiply the resulting identity by $(\nu \tau_x/\tau)_{xx}$ to get
\begin{align}\label{id_tauxxx}
\left[\frac{1}{2}\left(\frac{\nu \tau_x}{\tau}\right)_{xx}^2\right]_t
+\frac{\theta}{\nu \tau}\left[\frac{\nu \tau_x}{\tau}\right]_{xx}^2
=\left[\frac{\nu \tau_x}{\tau}\right]_{xx}R_{\tau},
\end{align}
with
\begin{align*}
R_{\tau}=\left[\frac{u_t}{r^{m}}-\frac{v^2}{r^{m+1}}
+\frac{2m u\mu_x}{r}+\frac{\nu'(\theta)}{\tau}\left(\tau_x\theta_t-\tau_t\theta_x\right)+\left(\frac{\theta}{\tau}\right)_{x}\right]_{xx}+\frac{\theta}{\nu\tau}\left(\frac{\nu \tau_x}{\tau}\right)_{xx}.
\end{align*}
Integrating \eqref{id_tauxxx} and using Cauchy's inequality yield
\begin{align}
\left\|\left(\frac{\nu \tau_x}{\tau}\right)_{xx}(t)\right\|^2
+\int_{0}^{t}\left\|\left(\frac{\nu \tau_x}{\tau}\right)_{xx}(s)\right\|^2\mathrm{d}s
\lesssim 1+\int_{0}^{t}\int_{\mathcal{I}}R_{\tau}^2. \label{est5.5}
\end{align}	
From \eqref{E_uxxx} and \eqref{E_uxx}, we have
\begin{align}
\lrn (\tau_{xt},u_{xx},v_{xx},w_{xx})\rrn\lesssim 1. \label{est_tauxt}
\end{align}
Using \eqref{est_taux} and \eqref{est_tauxt}, 
after a direct calculation, we have
\begin{align}\notag
R_{\tau}^2\lesssim ~&
|(u_t,u_{xt},u_{xxt})|^2+|(v,v_x,v_{xx})|^2+|(\theta_x,\theta_{xx},\theta_{xxx})|^2\\
&+|(\tau_x,\theta_x,\tau_{xx},\theta_{xx},\theta_{xxx})|^2
+|(\tau_{xxx}\theta_t,\tau_{xxt},\tau_{xx}\theta_{xt},\theta_{xxt})|^2. \label{est_Rtau}
\end{align}
Plug \eqref{est_Rtau} into \eqref{est5.5} and use 
Corollary \ref{cor_1}, Lemmas \ref{lem_wx}--\ref{lem_thext} to get
\begin{align}\notag
&\left\|\left(\frac{\nu \tau_x}{\tau}\right)_{xx}(t)\right\|^2
+\int_{0}^{t}\left\|\left(\frac{\nu \tau_x}{\tau}\right)_{xx}(s)\right\|^2\mathrm{d}s\\ \notag
&\quad\lesssim 
\left\|\left.\left(\frac{\nu \tau_x}{\tau}\right)_{xx}\right|_{t=0}\right\|^2
+\int_{0}^{t}\|\theta_{xxx}\|^2+
\lrn\theta_t\rrn^2\int_{0}^{t}\|\tau_{xxx}\|^2+
\int_{0}^{t}\int_{\mathcal{I}}\tau_{xx}^2\theta_{xt}^2.  
\end{align}
By virtue of \eqref{apriori1}, \eqref{apriori2} and \eqref{E_tauxx}, we have
\begin{align}
\left\|\left(\frac{\nu \tau_x}{\tau}\right)_{xx}(t)\right\|^2
+\int_{0}^{t}\left\|\left(\frac{\nu \tau_x}{\tau}\right)_{xx}(s)\right\|^2\mathrm{d}s
\lesssim c_v+
\epsilon_1\left[\int_{0}^{t}\|\tau_{xxx}\|^2+\sup_{s\in[0,t]}\|\tau_{xx}(s)\|^2\right]. \label{est5.6}
\end{align}
Employ \eqref{est_taux} and \eqref{est_tauxt} to have
\begin{align*}
\left|\frac{\nu\tau_{xxx}}{\tau}\right|
\lesssim \left|\left(\frac{\nu \tau_x}{\tau}\right)_{xx}\right|+
\left|(\tau_{xx},\theta_{xx},\tau_x,\theta_x)\right|,
\end{align*}
which combined with \eqref{est5.6} and \eqref{E_iuxxxx} implies \eqref{E_tauxxx} if $\epsilon_1$ is sufficiently small.
\end{proof}

\section{Proof of Theorem \ref{thm}}
\label{sec3}
In this section we complete the proof of Theorem \ref{thm} by combining the {\em a priori} bounds obtained in Section \ref{sec2}, the continuation argument and Poincar\'{e}'s inequality. 
We divide this section into two parts. 
The first is devoted to proving the existence and uniqueness of global solutions to the problem \eqref{cns}--\eqref{bdy}, and the second one  aims at showing the convergence decay rate of the solutions toward constant states.

\subsection{Global solvability}
We first present the local solvability result to the initial boundary value problem \eqref{cns}--\eqref{bdy} in the following proposition, which can be proved by the standard iteration method (see, for instance, \citet{AKM90MR1035212}).

\begin{proposition} \label{local}
Let the initial data $(\tau_0,u_0,v_0,w_0,\theta_0)\in H^3(\mathcal{I})$ satisfy that
\begin{align*}
&\tau_0(x)\geq \lambda_1^{-1},\quad 
\theta_0(x)\geq \lambda_2^{-1}\quad
\forall\ x\in\mathcal{I},\\[0.5mm]
&\|u_0\|_{1}+\sqrt{c_v}\|(\theta_0-1,\theta_{t}|_{t=0})\|_1+\|\theta_{0xx}\|\leq \Lambda,\\
&\|(\tau_0,u_0,v_0,w_0)\|_{3}+\sqrt{c_v}\|\theta_0-1\|_1+\|(\theta_{0xx},\sqrt{\gamma-1}\theta_{0xxx})\|\leq \Pi,
\end{align*}
for some positive constants  $\lambda_1$, $\lambda_2$, $\Lambda$, and $\Pi$, where $\theta_t|_{t=0}$ and $\theta_{xt}|_{t=0}$ are defined by \eqref{initial_thet} and \eqref{initial_thext}, respectively.
Then there exists a positive constant $T_0=T_0(\lambda_1,\lambda_2,\Lambda,\Pi)$, which depends only on $\lambda_1$, $\lambda_2$, $\Lambda$, and $\Pi$, such that the initial boundary value problem \eqref{cns}--\eqref{bdy} has a unique solution $(\tau, u, v, w, \theta) \in X(0,T_0;2\lambda_1,2\Lambda)$.
\end{proposition}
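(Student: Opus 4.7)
The plan is to construct the local solution by a Picard--type iteration scheme, the standard approach for compressible Navier--Stokes--Fourier systems in Lagrangian coordinates, as developed for closely related systems in \citet{AKM90MR1035212}. First I would define a sequence $\{(\tau^n,u^n,v^n,w^n,\theta^n)\}_{n\geq 0}$ by induction. Setting $(\tau^{0},u^{0},v^{0},w^{0},\theta^{0})\equiv(\tau_0,u_0,v_0,w_0,\theta_0)$ and $r^0\equiv r_0$ with $r_0$ given by \eqref{r0}, and given the $n$-th iterate in $X(0,T_0;2\lambda_1,2\Lambda)$ with $\tau^n\geq(2\lambda_1)^{-1}$ and $\theta^n\geq(2\lambda_2)^{-1}$, I would first update the radial coordinate via $r^{n+1}_t=u^n$, $r^{n+1}|_{t=0}=r_0$, and then solve the four decoupled linear second--order parabolic equations for $(u^{n+1},v^{n+1},w^{n+1},\theta^{n+1})$ on $[0,T_0]\times\mathcal{I}$ obtained by freezing every temperature--dependent coefficient $\nu(\theta),\mu(\theta),\kappa(\theta)$ and every geometric factor $r^m,\tau^{-1}$ at the $n$-th step, supplemented with the boundary--initial data of \eqref{initial0}--\eqref{bdy}. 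Under the assumption \eqref{transport} and the positive lower bounds on $\tau^n,\theta^n$, these equations are uniformly parabolic, so classical theory on a bounded interval with mixed Dirichlet/Neumann data yields unique solutions in the regularity class demanded by $X$. The new specific volume is then defined by the linear ODE $\tau^{n+1}_t=((r^{n+1})^m u^{n+1})_x$ with $\tau^{n+1}|_{t=0}=\tau_0$.

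Next I would derive uniform bounds for the iterates. Because the nonlinearity enters only through the frozen $n$-th iterate whose $X$--norm is already controlled, the energy estimates of Section~\ref{sec2} can be replayed at the linear level without any smallness of $\gamma-1$. They produce a schematic inequality
\begin{equation*}
\mathcal{N}_{T_0}\bigl(\tau^{n+1},\ldots,\theta^{n+1}\bigr)\leq C_{\ast}(\lambda_1,\lambda_2,\Lambda,\Pi)\Bigl(1+T_0^{1/2}\,\mathcal{N}_{T_0}(\tau^{n},\ldots,\theta^{n})\Bigr),
\end{equation*}
where $\mathcal{N}_{T_0}(\cdot)$ aggregates all the norms appearing in the definition of $X(0,T_0;\cdot,\cdot)$. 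By choosing $T_0=T_0(\lambda_1,\lambda_2,\Lambda,\Pi)$ sufficiently small, the lower bound $\tau^{n+1}\geq(2\lambda_1)^{-1}$ is preserved, since $\tau^{n+1}=\tau_0+\int_0^t((r^{n+1})^m u^{n+1})_x\,\mathrm{d}s$ and the integrand is $L^\infty$-controlled by the energy; the positivity $\theta^{n+1}\geq(2\lambda_2)^{-1}$ follows from a similar continuity argument; and the energy $\mathcal{E}_0(T_0)$ stays below $(2\Lambda)^2$. Hence the iteration maps $X(0,T_0;2\lambda_1,2\Lambda)$ into itself.

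To pass to the limit, I would establish a contraction in a weaker space. Subtracting the equations for two consecutive iterates and performing basic $L^2$--type estimates on the differences $(\delta\tau^n,\delta u^n,\delta v^n,\delta w^n,\delta\theta^n)$ yields, after possibly further shrinking $T_0$, a bound of the form
\begin{equation*}
\|(\delta\tau^{n+1},\delta u^{n+1},\delta v^{n+1},\delta w^{n+1},\delta\theta^{n+1})\|_{Y(T_0)}\leq \tfrac{1}{2}\|(\delta\tau^{n},\delta u^{n},\delta v^{n},\delta w^{n},\delta\theta^{n})\|_{Y(T_0)},
\end{equation*}
where $Y(T_0)=C([0,T_0];L^2(\mathcal{I}))\cap L^2(0,T_0;H^1(\mathcal{I}))$ for the parabolic variables together with $C([0,T_0];L^2(\mathcal{I}))$ for $\delta\tau$. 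Interpolating the Cauchy property in $Y$ with the uniform $X$-bound yields convergence in a topology strong enough for the limit to solve the nonlinear problem \eqref{cns}--\eqref{bdy} and to lie in $X(0,T_0;2\lambda_1,2\Lambda)$. Uniqueness follows from the same contraction applied to any two solutions sharing the initial data.

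The main obstacle I anticipate is the bookkeeping of the $\sqrt{c_v}=(\gamma-1)^{-1/2}$ weights in the higher--order estimates for $\theta_t$ and $\theta_{xt}$ and, especially, checking the compatibility at $t=0$. Indeed $\theta_t|_{t=0}$ and $\theta_{xt}|_{t=0}$ are defined by \eqref{initial_thet}--\eqref{initial_thext} as explicit combinations of the initial data, and the hypothesis $\sqrt{c_v}\|(\theta_0-1,\theta_t|_{t=0})\|_1\leq\Lambda$ must be shown to be consistent with the $H^3$ assumption on the remaining data, along the lines of the computation leading to \eqref{est2a} and \eqref{initial_1}. Once this compatibility has been verified, the linear--level energy estimates parallel the arguments of Lemmas~\ref{lem_bas}--\ref{lem_tauxxx} in a considerably simpler form, since no Kanel$'$--type nonlinear bootstrap is needed for local existence.
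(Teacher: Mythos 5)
Your proposal is correct and follows exactly the route the paper itself indicates: the paper does not prove Proposition \ref{local} in detail but simply invokes ``the standard iteration method'' with a citation to \citet{AKM90MR1035212}, and your linearize--iterate--contract scheme with frozen coefficients is precisely that standard method. Your added attention to the $\sqrt{c_v}$-weighted norms and the compatibility of $\theta_t|_{t=0}$, $\theta_{xt}|_{t=0}$ with the $H^3$ data is a sensible fleshing-out of what the paper leaves implicit.
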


According to \eqref{H1} and \eqref{H2}, there exists a $(\gamma-1)$-independent positive constant $C_0$, such that
 \begin{align*}
 & \tau_0(x)\geq V_0^{-1},\quad 
 \theta_0(x)\geq V_0^{-1}\quad
 \forall\ x\in\mathcal{I},\\[1mm]
 &\|u_0\|_{1}+\sqrt{c_v}\|(\theta_0-1,\theta_{t}|_{t=0})\|_1+\|\theta_{0xx}\|\leq C_0,\\
 &\|(\tau_0,u_0,v_0,w_0)\|_{3}+\sqrt{c_v}\|\theta_0-1\|_1+\|(\theta_{0xx},\sqrt{\gamma-1}\theta_{0xxx})\|\leq \Pi_0.
 \end{align*}
Let $\epsilon_1$, $C_j$ ($j=1,\cdots,10$) be chosen in Section \ref{sec2}.
We assume that $\gamma-1\leq \epsilon_0$ with
\begin{align}\label{epsil0}
&\epsilon_0:=\min\{\delta_1,\delta_2\},
\end{align}
where
\begin{align} \label{def3.1}
\delta_1:=\left[\frac{\epsilon_1}{(2C_0)^2(2V_0)^2}\right]^4,\quad
\delta_2:=\left[\frac{\epsilon_1}{5C_{11}(2C_1)^2 }\right]^4,\quad
C_{11}:=\sum_{j=1}^{10}C_j.
\end{align}

Applying  Proposition \ref{local}, we can find a positive constant $t_1=T_0(V_0,V_0,C_0,\Pi_0)$ such that there exists a unique solution  $(\tau, u, v, w, \theta) \in X(0,t_1;2V_0,2C_0)$ to the initial boundary value problem \eqref{cns}--\eqref{bdy}.

Since $\gamma-1\leq \delta_1$,
we can apply  Corollary \ref{cor_1} and Lemmas \ref{lem_wx}--\ref{lem_tauxxx} with $T=t_1$ to deduce that there exists a positive constant $C_1(\gamma)$, depending only on $V_0$, $\Pi_0$, and $\gamma$, such that for each $t\in[0,t_1]$,  the local solution  $(\tau, u, v, w, \theta) $ satisfies
 \begin{align} \label{pr1}
 &\mathcal{E}_0(t)\leq C_{11},\quad 
  \tau(t,x)\geq C_1^{-1},\quad 
 \theta(t,x)\geq \tfrac{1}{2}\quad
 \forall\ x\in\mathcal{I},\\ \label{pr2}
 &\|(\tau,u,v,w)(t)\|_{3}+\sqrt{c_v}\|(\theta_0-1)(t)\|_1+\|(\theta_{xx},\sqrt{\gamma-1}\theta_{xxx})(t)\|\leq C_1(\gamma).
 \end{align}

If we take  $(\tau, u, v, w, \theta) (t_1,\cdot)$ as the initial data and apply Proposition \ref{local} again,
we can extend the local solution  $(\tau, u, v, w, \theta)$ to the time interval $[0,t_1+t_2]$ with
$t_2=T_0(C_1,2,C_{11},C_1(\gamma))$ such that $(\tau, u, v, w, \theta)\in X(t_1,t_1+t_2;2C_1,2\sqrt{C_{11}})$.
Hence the local solution $(\tau, u, v, w, \theta)\in X(0,t_1+t_2;2C_1,\sqrt{5C_{11}})$. 
Noting that $\gamma-1\leq \delta_2$, we can apply  Corollary \ref{cor_1} and Lemmas \ref{lem_wx}--\ref{lem_tauxxx} with $T=t_1+t_2$ to deduce that \eqref{pr1} and \eqref{pr2} hold for each $t\in[0,t_1+t_2]$.

Repeating the above procedure, we can then extend the solution $(\tau, u, v, w, \theta)$ step by step to a global one provided that  $\gamma-1\leq \epsilon_0$ with $\epsilon_0$ given by \eqref{epsil0}. Furthermore,
\begin{align*}
\|(\tau,u,v,w,\theta)(t)\|_{3}^2+\int_0^{\infty}\left[\|(u_x,v_x,w_x,\theta_x)\|_3^2+\|\tau_{x}\|_2^2\right]\mathrm{d}t\leq C_2(\gamma)
\qquad \forall\ t\in[0,\infty),
\end{align*}
where $C_2(\gamma)$ is some positive constant depending on $\gamma$, $\Pi_0$, and $V_0$.

\subsection{Convergence decay rate}
It follows from \eqref{id_intau} and Poincar\'{e}'s inequality that
\begin{align} \label{pr2.1}
	\|(\tau-\bar{\tau})(t)\|^2=
	\int_{\mathcal{I}}\left|\tau(t,x)-\int_{\mathcal{I}}\tau(t,y)\mathrm{d}y\right|^2\mathrm{d}x\lesssim \|\tau_{x}(t)\|^2.
\end{align}
Thanks to the boundary conditions for $(u,v,w)$, we apply Poincar\'{e}'s inequality to get
\begin{align}\label{pr2.2}
\|(u,v,w)(t)\|\lesssim \|(u_x,v_x,w_x)(t)\|.
\end{align}
Multiplying \eqref{cns.b}, \eqref{cns.c}, \eqref{cns.d} by $u$, $v$, $w$, respectively,  adding them with \eqref{cns.e}, and integrating the resulting identity over $[0,t]\times\mathcal{I}$ yield
\begin{align*}
\int_{\mathcal{I}}\left[c_v\theta+\frac{1}{2}(u^2+v^2+w^2)\right](t,x)\mathrm{d}x=c_v\bar{\theta}\qquad \forall\ t\in[0,\infty),
\end{align*}
where $\bar{\theta}$ is given by \eqref{hat}.
We use Poincar\'{e}'s inequality and  \eqref{pr2.2} to obtain
\begin{align}\label{pr2.4}
\|(\theta-\bar{\theta})(t)\|^2\leq 
\int_{\mathcal{I}}\left|\theta(t,x)-\int_{\mathcal{I}}\theta(t,y)\mathrm{d}y\right|^2\mathrm{d}x+\|(u,v,w)(t)\|^2
\lesssim \|(u_x,v_x,w_x,\theta_x)(t)\|^2.
\end{align}
We have from  \eqref{r_eq} that for all $(t,x)\in[0,\infty)\times\mathcal{I}$,
\begin{align*}
\left[r^{m+1}(t,x)\right]_x=(m+1)\tau(t,x),\quad
r^{m+1}(t,x)=a^{m+1}+(m+1)\int_0^x\tau(t,y)\mathrm{d}y.
\end{align*}
Then 
\begin{align*}
\left|r^{m+1}(t,x)-\bar{r}^{m+1}\right|&=
(m+1)\left|\int_0^x(\tau(t,y)-\bar{\tau})\mathrm{d}y\right|,\\
\left[r^{m+1}(t,x)-\bar{r}^{m+1}\right]_x&=(m+1)\left[\tau(t,x)-\bar{\tau}\right],
\end{align*}
which combined with \eqref{pr2.1} imply
\begin{align} \label{pr2.15}
\|r(t)-\bar{r}\|_2\lesssim \left\|r^{m+1}(t)-\bar{r}^{m+1}\right\|_2
\lesssim \left\|\tau(t)-\bar{\tau}\right\|_1.
\end{align}

To derive the exponential stability of the solution $(\tau,u,v,w,\theta)$, we define the following energy functionals:
\begin{align*}
\mathcal{H}_{\eta}(t)&:=\int_{\mathcal{I}}\eta_{\bar{\theta}}(\tau,u,v,w,\theta)(t,x)\mathrm{d}x,\\
\quad\mathcal{H}_{\tau}(t)&:=\int_{\mathcal{I}}\tau_x^2(t,x)\mathrm{d}x,\quad \mathcal{H}_{\theta}(t):=\int_{\mathcal{I}}\theta_x^2(t,x)\mathrm{d}x,\\
\mathcal{H}_{U}(t)&:=\int_{\mathcal{I}}\left[u_x^2+v_x^2+w_x^2\right](t,x)\mathrm{d}x,\\
\quad\mathcal{H}_{0}(t)&:=\int_{\mathcal{I}}\left[\frac{1}{2}\left(\frac{\nu \tau_x}{\tau}\right)^2-
\frac{u}{r^{m}}\frac{\nu \tau_x}{\tau}\right](t,x)\mathrm{d}x,
\end{align*}
where $\eta_{\bar{\theta}}(\tau,u,v,w,\theta)$ is defined by \eqref{eta}. 
Then we have from \eqref{pr2.2} that
\begin{align}
\label{pr2.13}
\mathcal{H}_{\tau}(t)-C\|u(t)\|^2\lesssim \mathcal{H}_{0}(t)\lesssim \mathcal{H}_{\tau}(t)+\|u(t)\|^2
\lesssim  \mathcal{H}_{\tau}(t)+ \mathcal{H}_{U}(t).
\end{align}

It follows from \eqref{est3}, \eqref{est4}, and \eqref{pr2.2} that
\begin{align}\label{pr2.5}
\frac{\mathrm{d}}{\mathrm{d}t}\mathcal{H}_{U}(t)+c\|(u_{xx},v_{xx},w_{xx})(t)\|^2\lesssim 
\mathcal{H}_{\tau}(t)+\mathcal{H}_{U}(t)+\mathcal{H}_{\theta}(t).
\end{align}
In light of \eqref{est5} and \eqref{pr2.2}, we have
\begin{align}\label{pr2.6}
c_v\frac{\mathrm{d}}{\mathrm{d}t}\mathcal{H}_{\theta}(t)+c\|\theta_{xx}(t)\|^2\lesssim 
\mathcal{H}_{\tau}(t)+\mathcal{H}_{U}(t)+\mathcal{H}_{\theta}(t)+\|(u_{xx},v_{xx},w_{xx})(t)\|^2.
\end{align}
Multiplying \eqref{pr2.6} by a sufficiently small $\sigma_1>0$ and adding this with \eqref{pr2.5} yield
\begin{align}
\frac{\mathrm{d}}{\mathrm{d}t}
\left[\mathcal{H}_{U}(t)+c_v\sigma_1\mathcal{H}_{\theta}(t)\right]+c\sigma_1\|(u_{xx},v_{xx},w_{xx},\theta_{xx})(t)\|^2
\lesssim 
\mathcal{H}_{\tau}(t)+\mathcal{H}_{U}(t)+\mathcal{H}_{\theta}(t). \label{pr2.7}
\end{align}
By virtue of \eqref{est2}, \eqref{pr2.2}, and \eqref{est2a},  we get
\begin{align} \notag
\frac{\mathrm{d}}{\mathrm{d}t}\mathcal{H}_{0}(t)+c\mathcal{H}_{\tau}(t)
\lesssim~&\left[1+(\gamma-1)^2\right]\left[\mathcal{H}_{U}(t)+\mathcal{H}_{\theta}(t)\right]\\
&+(\gamma-1)^2\left[
\mathcal{H}_{\tau}(t)+\|(u_{xx},v_{xx},w_{xx},\theta_{xx})(t)\|^2\right]. \label{pr2.8}
\end{align}
Since $c_v^{-1}=|\gamma-1|\leq \epsilon_0$,
we multiply \eqref{pr2.7} by a sufficiently small $\sigma_2$ and add this into \eqref{pr2.8} to find
\begin{align} \notag
&\frac{\mathrm{d}}{\mathrm{d}t}
\left[\mathcal{H}_{0}(t)+\sigma_2\mathcal{H}_{U}(t)+c_v\sigma_2\sigma_1\mathcal{H}_{\theta}(t)\right]+c\mathcal{H}_{\tau}(t)+c\sigma_2\sigma_1\|(u_{xx},v_{xx},w_{xx},\theta_{xx})(t)\|^2
\\ &\quad \lesssim \left[1+\epsilon_0^2\right]\left[\mathcal{H}_{U}(t)+\mathcal{H}_{\theta}(t)\right]+\epsilon_0^2\left[
\mathcal{H}_{\tau}(t)+\|(u_{xx},v_{xx},w_{xx},\theta_{xx})(t)\|^2\right]. \label{pr2.9}
\end{align}
Take $\epsilon_0>0$ suitably small to infer
\begin{align} 
\frac{\mathrm{d}}{\mathrm{d}t}
\left[\mathcal{H}_{0}(t)+\sigma_2\mathcal{H}_{U}(t)+c_v\sigma_2\sigma_1\mathcal{H}_{\theta}(t)\right]+c\mathcal{H}_{\tau}(t)
\lesssim \mathcal{H}_{U}(t)+\mathcal{H}_{\theta}(t). \label{pr2.10}
\end{align}
It follows from \eqref{est1} and that
\begin{align} \label{pr2.11}
 \frac{\mathrm{d}}{\mathrm{d}t}\mathcal{H}_{\eta}(t)+c\left[\mathcal{H}_{U}(t)+\mathcal{H}_{\theta}(t)\right]\leq 0.
\end{align}
Multiplying \eqref{pr2.10} by a sufficiently small $\sigma_3$ and adding this into \eqref{pr2.11} imply
\begin{align}
\frac{\mathrm{d}}{\mathrm{d}t}
\mathcal{H}(t)+c\left[\mathcal{H}_{\tau}(t)+\mathcal{H}_{U}(t)+\mathcal{H}_{\theta}(t)\right]
\leq 0, \label{pr2.12}
\end{align}
where 
\begin{align*}
\mathcal{H}(t):=\mathcal{H}_{\eta}(t)+\sigma_3\mathcal{H}_{0}(t)+\sigma_3\sigma_2\mathcal{H}_{U}(t)+c_v\sigma_3\sigma_2\sigma_1\mathcal{H}_{\theta}(t).
\end{align*}
Using \eqref{pr2.13}, \eqref{pr2.1}, \eqref{pr2.2}, and \eqref{pr2.4}, we have
\begin{align*}
\left[\frac{1}{2}-\sigma_3C\right]\|u(t)\|^2+c\sigma_3\left[\mathcal{H}_{\tau}(t)+\mathcal{H}_{U}(t)+\mathcal{H}_{\theta}(t)\right]\leq \mathcal{H}(t)\lesssim \mathcal{H}_{\tau}(t)+\mathcal{H}_{U}(t)+\mathcal{H}_{\theta}(t).
\end{align*}
Take $\sigma_3>0$ with $C\sigma_3\leq 1/4$ so that
\begin{align*}
c\sigma_3\left[\mathcal{H}_{\tau}(t)+\mathcal{H}_{U}(t)+\mathcal{H}_{\theta}(t)\right]\leq \mathcal{H}(t)\lesssim \mathcal{H}_{\tau}(t)+\mathcal{H}_{U}(t)+\mathcal{H}_{\theta}(t),
\end{align*} 
which combined with \eqref{pr2.12} yields
\begin{align} \label{pr2.14}
\mathcal{H}_{\tau}(t)+\mathcal{H}_{U}(t)+\mathcal{H}_{\theta}(t)\lesssim \mathcal{H}(t)\lesssim \mathrm{e}^{-ct}.
\end{align}
We combine \eqref{pr2.14}, \eqref{pr2.1}, \eqref{pr2.2}, \eqref{pr2.4}, and \eqref{pr2.15} to conclude the exponential decay estimate \eqref{thm_C3}. Therefore, the proof of Theorem \ref{thm} has been completed.

\bigbreak

\begin{center}
	{\bf Acknowledgement}
\end{center}
Research of the authors was supported by the Fundamental Research Funds for the Central Universities, the Project funded by China Postdoctoral Science Foundation, and the grants from National Natural Science Foundation of China under contracts 11601398 and 11671309.
The authors express much gratitude to Huijiang Zhao for his support and advice.
Tao Wang would like to warmly thank Paolo Secchi, Alessandro Morando, and Paola Trebeschi for support and hospitality during his postdoctoral stay at University of Brescia.

\bibliographystyle{abbrvnat}

\bibliography{YJDEQ8706}

\def\cprime{$'$}
\begin{thebibliography}{30}
\providecommand{\natexlab}[1]{#1}
\providecommand{\url}[1]{\texttt{#1}}
\expandafter\ifx\csname urlstyle\endcsname\relax
  \providecommand{\doi}[1]{doi: #1}\else
  \providecommand{\doi}{doi: \begingroup \urlstyle{rm}\Url}\fi

\bibitem[Antontsev et~al.(1990)Antontsev, Kazhikhov, and
  Monakhov]{AKM90MR1035212}
S.~N. Antontsev, A.~V. Kazhikhov, and V.~N. Monakhov.
\newblock \emph{Boundary value problems in mechanics of nonhomogeneous fluids},
  volume~22 of \emph{Studies in Mathematics and its Applications}.
\newblock North-Holland Publishing Co., Amsterdam, 1990.
\newblock ISBN 0-444-88382-7.
\newblock Translated from the Russian.

\bibitem[Chapman and Cowling(1990)]{CC90MR1148892}
S.~Chapman and T.~G. Cowling.
\newblock \emph{The mathematical theory of nonuniform gases}.
\newblock Cambridge Mathematical Library. Cambridge University Press,
  Cambridge, third edition, 1990.
\newblock ISBN 0-521-40844-X.
\newblock An account of the kinetic theory of viscosity, thermal conduction and
  diffusion in gases, In co-operation with D. Burnett, With a foreword by Carlo
  Cercignani.

\bibitem[Chen and Kratka(2002)]{CK02MR1916552}
G.-Q. Chen and M.~Kratka.
\newblock Global solutions to the {N}avier-{S}tokes equations for compressible
  heat-conducting flow with symmetry and free boundary.
\newblock \emph{Comm. Partial Differential Equations}, 27\penalty0
  (5-6):\penalty0 907--943, 2002.
\newblock ISSN 0360-5302.
\newblock \doi{10.1081/PDE-120004889}.
\newblock URL \url{http://dx.doi.org/10.1081/PDE-120004889}.

\bibitem[Cui and Yao(2015)]{CY15MR3279358}
H.~Cui and Z.-a. Yao.
\newblock Asymptotic behavior of compressible {$p$}-th power {N}ewtonian fluid
  with large initial data.
\newblock \emph{J. Differential Equations}, 258\penalty0 (3):\penalty0
  919--953, 2015.
\newblock ISSN 0022-0396.
\newblock \doi{10.1016/j.jde.2014.10.011}.
\newblock URL \url{http://dx.doi.org/10.1016/j.jde.2014.10.011}.

\bibitem[Frid and Shelukhin(2000)]{FS00MR1759201}
H.~Frid and V.~Shelukhin.
\newblock Vanishing shear viscosity in the equations of compressible fluids for
  the flows with the cylinder symmetry.
\newblock \emph{SIAM J. Math. Anal.}, 31\penalty0 (5):\penalty0 1144--1156
  (electronic), 2000.
\newblock ISSN 0036-1410.
\newblock \doi{10.1137/S003614109834394X}.
\newblock URL \url{http://dx.doi.org/10.1137/S003614109834394X}.

\bibitem[Hoff and Jenssen(2004)]{HJ04MR2091508}
D.~Hoff and H.~K. Jenssen.
\newblock Symmetric nonbarotropic flows with large data and forces.
\newblock \emph{Arch. Ration. Mech. Anal.}, 173\penalty0 (3):\penalty0
  297--343, 2004.
\newblock ISSN 0003-9527.
\newblock \doi{10.1007/s00205-004-0318-5}.
\newblock URL \url{http://dx.doi.org/10.1007/s00205-004-0318-5}.

\bibitem[Jenssen and Karper(2010)]{JK10MR2644363}
H.~K. Jenssen and T.~K. Karper.
\newblock One-dimensional compressible flow with temperature dependent
  transport coefficients.
\newblock \emph{SIAM J. Math. Anal.}, 42\penalty0 (2):\penalty0 904--930, 2010.
\newblock ISSN 0036-1410.
\newblock \doi{10.1137/090763135}.
\newblock URL \url{http://dx.doi.org/10.1137/090763135}.

\bibitem[Jiang(1996)]{J96MR1389908}
S.~Jiang.
\newblock Global spherically symmetric solutions to the equations of a viscous
  polytropic ideal gas in an exterior domain.
\newblock \emph{Comm. Math. Phys.}, 178\penalty0 (2):\penalty0 339--374, 1996.
\newblock ISSN 0010-3616.
\newblock URL \url{http://projecteuclid.org/euclid.cmp/1104286655}.

\bibitem[Jiang(1998)]{J98MR1748226}
S.~Jiang.
\newblock Large-time behavior of solutions to the equations of a viscous
  polytropic ideal gas.
\newblock \emph{Ann. Mat. Pura Appl. (4)}, 175:\penalty0 253--275, 1998.
\newblock ISSN 0003-4622.
\newblock \doi{10.1007/BF01783686}.
\newblock URL \url{http://dx.doi.org/10.1007/BF01783686}.

\bibitem[Jiang and Zhang(2009)]{JZ09MR2505859}
S.~Jiang and J.~Zhang.
\newblock Boundary layers for the {N}avier-{S}tokes equations of compressible
  heat-conducting flows with cylindrical symmetry.
\newblock \emph{SIAM J. Math. Anal.}, 41\penalty0 (1):\penalty0 237--268, 2009.
\newblock ISSN 0036-1410.
\newblock \doi{10.1137/07070005X}.
\newblock URL \url{http://dx.doi.org/10.1137/07070005X}.

\bibitem[Kanel{\cprime}(1968)]{K68MR0227619}
J.~I. Kanel{\cprime}.
\newblock A model system of equations for the one-dimensional motion of a gas.
\newblock \emph{Differencial\,\cprime nye Uravnenija}, 4:\penalty0 721--734,
  1968.
\newblock ISSN 0374-0641.

\bibitem[Kawohl(1985)]{K85MR791841}
B.~Kawohl.
\newblock Global existence of large solutions to initial-boundary value
  problems for a viscous, heat-conducting, one-dimensional real gas.
\newblock \emph{J. Differential Equations}, 58\penalty0 (1):\penalty0 76--103,
  1985.
\newblock ISSN 0022-0396.
\newblock \doi{10.1016/0022-0396(85)90023-3}.
\newblock URL \url{http://dx.doi.org/10.1016/0022-0396(85)90023-3}.

\bibitem[Kazhikhov and Shelukhin(1977)]{KS77MR0468593}
A.~V. Kazhikhov and V.~V. Shelukhin.
\newblock Unique global solution with respect to time of initial-boundary value
  problems for one-dimensional equations of a viscous gas.
\newblock \emph{Prikl. Mat. Meh.}, 41\penalty0 (2):\penalty0 282--291, 1977.

\bibitem[Landau and Lifshitz(1987)]{LL87MR961259}
L.~D. Landau and E.~M. Lifshitz.
\newblock \emph{Course of theoretical physics. {V}ol. 6}.
\newblock Pergamon Press, Oxford, second edition, 1987.
\newblock ISBN 0-08-033933-6; 0-08-033932-8.
\newblock Fluid mechanics, Translated from the third Russian edition by J. B.
  Sykes and W. H. Reid.

\bibitem[Liang(2014)]{Liang1405.0569}
Z.~Liang.
\newblock Large-time behavior for spherically symmetric flow of viscous
  polytropic gas in exterior unbounded domain with large initial data.
\newblock 2014.
\newblock Preprint at \href{http://arxiv.org/abs/1405.0569}{arXiv:\,1405.0569}.

\bibitem[Liu et~al.(2014)Liu, Yang, Zhao, and Zou]{LYZZ14MR3225502}
H.~Liu, T.~Yang, H.~Zhao, and Q.~Zou.
\newblock One-dimensional compressible {N}avier-{S}tokes equations with
  temperature dependent transport coefficients and large data.
\newblock \emph{SIAM J. Math. Anal.}, 46\penalty0 (3):\penalty0 2185--2228,
  2014.
\newblock ISSN 0036-1410.
\newblock \doi{10.1137/130920617}.
\newblock URL \url{http://dx.doi.org/10.1137/130920617}.

\bibitem[Liu(1977)]{Liu77MR0435618}
T.~P. Liu.
\newblock Solutions in the large for the equations of nonisentropic gas
  dynamics.
\newblock \emph{Indiana Univ. Math. J.}, 26\penalty0 (1):\penalty0 147--177,
  1977.
\newblock ISSN 0022-2518.
\newblock URL \url{http://dx.doi.org/10.1512/iumj.1977.26.26011}.

\bibitem[Matsumura(1981)]{Matsumura81energy}
A.~Matsumura.
\newblock An energy method for the equations of motion of compressible viscous
  and heat-conductive fluids.
\newblock Technical report, DTIC Document, 1981.
\newblock URL \url{http://www.math.sci.osaka-u.ac.jp/~akitaka/time-e-met.pdf}.
\newblock \href{http://www.math.sci.osaka-u.ac.jp/~akitaka/time-e-met.pdf}{MRC
  Technical Summary Report \#\,2194}, University of Wisconsin--Madison.

\bibitem[Matsumura and Nishida(1982)]{MN82MR784652}
A.~Matsumura and T.~Nishida.
\newblock Initial-boundary value problems for the equations of motion of
  general fluids.
\newblock In \emph{Computing methods in applied sciences and engineering, {V}
  ({V}ersailles, 1981)}, pages 389--406. North-Holland, Amsterdam, 1982.

\bibitem[Matsumura and Nishida(1983)]{MN83MR713680}
A.~Matsumura and T.~Nishida.
\newblock Initial-boundary value problems for the equations of motion of
  compressible viscous and heat-conductive fluids.
\newblock \emph{Comm. Math. Phys.}, 89\penalty0 (4):\penalty0 445--464, 1983.
\newblock ISSN 0010-3616.
\newblock URL \url{http://projecteuclid.org/euclid.cmp/1103922925}.

\bibitem[Nikolaev(1983)]{N83MR809891}
V.~B. Nikolaev.
\newblock Global solvability of the equations of motion of a viscous gas with
  axial and spherical symmetry.
\newblock \emph{Dinamika Sploshn. Sredy}, \penalty0 (63):\penalty0 136--141,
  162, 1983.
\newblock ISSN 0420-0497.

\bibitem[Nishida and Smoller(1973)]{NS73MR0330789}
T.~Nishida and J.~A. Smoller.
\newblock Solutions in the large for some nonlinear hyperbolic conservation
  laws.
\newblock \emph{Comm. Pure Appl. Math.}, 26:\penalty0 183--200, 1973.
\newblock ISSN 0010-3640.
\newblock URL \url{http://dx.doi.org/10.1002/cpa.3160260205}.

\bibitem[Nishihara et~al.(2004)Nishihara, Yang, and Zhao]{NYZ04MR2083790}
K.~Nishihara, T.~Yang, and H.~Zhao.
\newblock Nonlinear stability of strong rarefaction waves for compressible
  {N}avier-{S}tokes equations.
\newblock \emph{SIAM J. Math. Anal.}, 35\penalty0 (6):\penalty0 1561--1597
  (electronic), 2004.
\newblock ISSN 0036-1410.
\newblock \doi{10.1137/S003614100342735X}.
\newblock URL \url{http://dx.doi.org/10.1137/S003614100342735X}.

\bibitem[Pan and Zhang(2015)]{PZ15MR3291375}
R.~Pan and W.~Zhang.
\newblock Compressible {N}avier-{S}tokes equations with temperature dependent
  heat conductivity.
\newblock \emph{Commun. Math. Sci.}, 13\penalty0 (2):\penalty0 401--425, 2015.
\newblock ISSN 1539-6746.
\newblock \doi{10.4310/CMS.2015.v13.n2.a7}.
\newblock URL \url{http://dx.doi.org/10.4310/CMS.2015.v13.n2.a7}.

\bibitem[Qin et~al.(2015)Qin, Yang, Yao, and Zhou]{QYYZ15MR3325782}
X.~Qin, T.~Yang, Z.-a. Yao, and W.~Zhou.
\newblock Vanishing shear viscosity and boundary layer for the
  {N}avier-{S}tokes equations with cylindrical symmetry.
\newblock \emph{Arch. Ration. Mech. Anal.}, 216\penalty0 (3):\penalty0
  1049--1086, 2015.
\newblock ISSN 0003-9527.
\newblock \doi{10.1007/s00205-014-0826-x}.
\newblock URL \url{http://dx.doi.org/10.1007/s00205-014-0826-x}.

\bibitem[Temple(1981)]{Temple81MR626623}
J.~B. Temple.
\newblock Solutions in the large for the nonlinear hyperbolic conservation laws
  of gas dynamics.
\newblock \emph{J. Differential Equations}, 41\penalty0 (1):\penalty0 96--161,
  1981.
\newblock ISSN 0022-0396.
\newblock \doi{10.1016/0022-0396(81)90055-3}.
\newblock URL \url{http://dx.doi.org/10.1016/0022-0396(81)90055-3}.

\bibitem[Vincenti and Kruger(1965)]{VK65}
W.~G. Vincenti and C.~H. Kruger, Jr.
\newblock \emph{Introduction to Physical Gas Dynamics}.
\newblock John Wiley and Sons, New York, 1965.

\bibitem[Wang(2016)]{W16MR3461630}
T.~Wang.
\newblock One dimensional {$p$}-th power {N}ewtonian fluid with
  temperature-dependent thermal conductivity.
\newblock \emph{Commun. Pure Appl. Anal.}, 15\penalty0 (2):\penalty0 477--494,
  2016.
\newblock ISSN 1534-0392.
\newblock \doi{10.3934/cpaa.2016.15.477}.
\newblock URL \url{http://dx.doi.org/10.3934/cpaa.2016.15.477}.

\bibitem[Wang and Zhao(2016)]{WZ16}
T.~Wang and H.~Zhao.
\newblock One-dimensional compressible heat-conducting gas with
  temperature-dependent viscosity.
\newblock \emph{Math. Models Methods Appl. Sci.}, 26\penalty0 (12):\penalty0
  2237--2275, 2016.
\newblock ISSN 0218-2025.
\newblock \doi{10.1142/S0218202516500524}.
\newblock URL \url{http://dx.doi.org/10.1142/S0218202516500524}.

\bibitem[Wen and Zhu(2014)]{WZ14MR3249721}
H.~Wen and C.~Zhu.
\newblock Global symmetric classical solutions of the full compressible
  {N}avier-{S}tokes equations with vacuum and large initial data.
\newblock \emph{J. Math. Pures Appl. (9)}, 102\penalty0 (3):\penalty0 498--545,
  2014.
\newblock ISSN 0021-7824.
\newblock \doi{10.1016/j.matpur.2013.12.003}.
\newblock URL \url{http://dx.doi.org/10.1016/j.matpur.2013.12.003}.

\end{thebibliography}

\end{document}